\documentclass[11pt,a4paper]{amsart}
\usepackage[T1]{fontenc}
\usepackage{lmodern}
\usepackage[margin=27mm]{geometry}
\usepackage{amsmath,amssymb,amsthm,mathtools,slashed,mathrsfs}
\usepackage{microtype}
\usepackage[colorlinks=true,linkcolor=blue,citecolor=blue,urlcolor=blue]{hyperref}
\usepackage[nameinlink,noabbrev,capitalise]{cleveref}
\numberwithin{equation}{section}
\newtheorem{theorem}{Theorem}[section]
\newtheorem{proposition}[theorem]{Proposition}
\newtheorem{lemma}[theorem]{Lemma}
\newtheorem{corollary}[theorem]{Corollary}
\theoremstyle{remark}

\newtheorem{example}[theorem]{Example}
\newcommand{\RR}{\mathbb R}
\newcommand{\CC}{\mathbb C}
\newcommand{\ZZ}{\mathbb Z}
\newcommand{\NN}{\mathbb N}
\newcommand{\ii}{\mathrm i}
\newcommand{\D}{\slashed D}
\newcommand{\cR}{\mathcal R}
\newcommand{\Rea}{\operatorname{Re}}
\newcommand{\id}{\operatorname{Id}}
\newcommand{\rank}{\operatorname{rank}}
\newcommand{\Area}{\operatorname{Area}}
\newcommand{\inj}{\operatorname{inj}}
\newcommand{\tnabla}{\widetilde\nabla}
\newcommand{\pair}[2]{\langle #1,#2\rangle}
\newcommand{\vol}{\operatorname{vol}}

\newcommand{\dvol}{\,d\operatorname{vol}}
\allowdisplaybreaks[1]

\title[Coupled Dirac-harmonic maps from tori]
{Coupled Dirac-harmonic maps from closed spin tori}
\author[J\"urgen Jost]{J\"urgen Jost}
\address{Max Planck Institute for Mathematics in the Sciences,
Inselstrasse 22, 04103 Leipzig, Germany}
\email{jost@mis.mpg.de}
\author[Jingyong Zhu]{Jingyong Zhu}
\address{Department of Mathematics, Sichuan University,
Chengdu 610065, China}
\email{jzhu@scu.edu.cn}
\date{September 26, 2026}
\subjclass[2020]{53C43, 58E20, 53C27}
\keywords{Dirac-harmonic map, closed spin torus, target curvature,
magnetic geodesic, curvature area}

\begin{document}
\begin{abstract}
We construct Dirac-harmonic maps from closed spin tori whose map
components are not harmonic. For a closed Riemannian surface as
target, we prove that such solutions exist for every source metric
and every spin structure whenever the target metric is nonflat.
The maps can be chosen null-homotopic,
of rank one everywhere, with nowhere-vanishing tension and
unbounded energy. For a strictly negatively curved closed oriented
target surface, every homotopy class contains such a sequence
when the source has the trivial spin structure. For the other
spin structures we give a compatibility condition for the same
construction. Adding a harmonic circle component yields coupled
immersions into the product of any such surface with a circle.
The construction combines a spinor phase that
cancels the rotation of a moving target frame with a curvature
balance along a closed curve. Minimizing the energy of short loops
with prescribed signed curvature area produces the required curves
without any symmetry assumption on the target. Their total
geodesic curvature determines the spinor's periodicity. We also
give a curve criterion in arbitrary target dimension, applications
to totally geodesic surfaces and warped products, and a local
choice of target metric that works on every closed manifold of
dimension at least two.
\end{abstract}
\maketitle

\section{Introduction}\label{sec:intro}

Dirac-harmonic maps couple the harmonic map equation to a Dirac
equation for a spinor with values in the pullback tangent bundle
of the target. Introduced by Chen, Jost, Li, and Wang
\cite{CJLW06}, they are critical points of the conformally invariant
functional
\begin{equation}\label{eq:intro-functional}
 \mathcal L(\phi,\Psi)
 =\frac12\int_M\bigl(|d\phi|^2+
       \pair{\Psi}{\D^\phi\Psi}\bigr)\dvol_g.
\end{equation}
Here \((M,g)\) is a closed oriented Riemannian spin surface,
\((N,h)\) is a Riemannian manifold, and \(\Psi\) is a section of
\(\Sigma M\otimes_{\RR}\phi^*TN\). The Euler--Lagrange equations are
\begin{equation}\label{eq:DH}
 \D^\phi\Psi=0,\qquad \tau_g(\phi)=\cR(\phi,\Psi),
\end{equation}
where \(\cR\) contracts the target curvature with \(d\phi\) and
two spinor factors. We call a solution \emph{coupled} if
\(\tau_g(\phi)\not\equiv0\). The spinor then contributes a
nonzero term to the map equation, and the map is not harmonic.
A nonzero spinor alone does not ensure coupling.

This distinction is important in the existence theory. The
index-theoretic construction of Ammann--Ginoux \cite{AG13}
produces nonzero twisted harmonic spinors along harmonic maps,
and hence uncoupled solutions. Heat-flow methods also give
existence results with nonzero spinors; see Jost--Zhu
\cite{JZ21flow} and Jost--Sun--Zhu \cite{JSZ25}. These results
do not by themselves guarantee a nonharmonic map component.
For the \(\alpha\)-perturbed system, Jost--Zhu \cite{JZ21alpha}
obtain coupled solutions under a uniform energy hypothesis,
but the classical limit as \(\alpha\downarrow1\) need not remain
nontrivial.

Explicit constructions provide another approach. Jost--Mo--Zhu
\cite{JMZ09} constructed coupled examples on noncompact surfaces,
while Ammann--Ginoux \cite{AG19} studied constructions based on
immersions. Chen--Jost--Sun--Zhu \cite{CJSZ15} reduced the
Dirac-geodesic equation on a target surface to a
magnetic-geodesic equation involving its Gaussian curvature.
The latter work provides a useful geometric starting point, but
there is a further condition for a solution on a closed source:
the spinor must return with the sign prescribed by the source
spin structure. A closed Dirac-geodesic in the terminology of
\cite{CJSZ15} has a closed underlying curve; its spinor need not
close up. Repeated traversal need not restore the required spinor
periodicity.

We prove general existence results for coupled Dirac-harmonic
maps from closed spin tori. For a closed target surface with
a prescribed metric, we require only that its Gaussian
curvature is not identically zero. The construction works
on every flat torus with any prescribed spin structure;
uniformization and conformal covariance then yield the
result for arbitrary source metrics.

All manifolds and metrics below are smooth, and ``closed'' means
compact and without boundary; surfaces are assumed connected. Throughout, $\mathbb N=\{1,2,\ldots\}$.
We write
\[
 E_g(\phi)=\frac12\int_M|d\phi|_g^2\dvol_g
\]
for the map energy. Our first main result is the following.

\begin{theorem}[Nonflat surface targets]
\label{thm:nonflat-surface}
Let \((N,h)\) be a closed Riemannian surface. If its Gaussian
curvature \(K\) is not identically zero, then, for every closed
oriented Riemannian torus \((M,g)\) and every spin structure on
\(M\), there is a sequence of smooth Dirac-harmonic maps
\((\phi_j,\Psi_j):(M,g)\to(N,h)\) such that
\begin{equation}\label{eq:main-conclusions}
 \rank(d\phi_j)=1,\qquad
 \tau_g(\phi_j)\ne0\ \text{everywhere},\qquad
 E_g(\phi_j)\longrightarrow\infty.
\end{equation}
The maps \(\phi_j\) are null-homotopic. For a flat metric
conformal to \(g\), the corresponding twisted spinors are parallel.
No orientability assumption on \(N\) is
required.
\end{theorem}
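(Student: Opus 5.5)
We will reduce to a flat torus and an ansatz in which the map factors through a closed curve in \(N\). By uniformization, \(g=e^{2u}g_0\) with \(g_0\) flat. The Dirac-harmonic system is conformally invariant in dimension two: \(\tau_g=e^{-2u}\tau_{g_0}\), and \(\Psi\mapsto e^{-u/2}\Psi\) (after the standard identification of spinor bundles) intertwines the Dirac operators up to the factor \(e^{-3u/2}\). It therefore suffices to treat \((M,g_0)=\RR^2/\Lambda\) with a prescribed spin structure, that is, prescribed signs \(\epsilon(\lambda)\in\{\pm1\}\) for \(\lambda\in\Lambda\). The properties in \eqref{eq:main-conclusions} and null-homotopy are invariant under this change. \(E_g\) is conformally invariant, so unboundedness also persists.

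\textbf{Ansatz.} Choose a primitive \(\lambda_1\in\Lambda\) and a unit vector \(e\) in the direction of \(\lambda_1\). Set \(\phi(x)=\gamma(\pair{x}{e})\), where \(\gamma:\RR/L\ZZ\to N\) is a closed curve, not necessarily of unit speed. Then \(\rank d\phi=1\) wherever \(\dot\gamma\ne0\). Along \(\gamma\) take a parallel orthonormal frame \(\{T,\nu\}\) in the spirit of a moving Frenet-type frame; this is well defined locally even if \(N\) is nonorientable. For the spinor, use \(\Psi=\psi_1\otimes T+\psi_2\otimes\nu\) with \(\psi_i\) constant spinors on the flat torus multiplied by a phase \(e^{\ii\theta(s)\cdot}\), acting by Clifford multiplication by \(e\). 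The phase \(\theta\) is chosen to cancel the rotation \(\kappa_g\) of the frame relative to parallel transport. The Dirac equation \(\D^\phi\Psi=0\) then reduces to \(\theta'=\kappa_g|\dot\gamma|\) times a fixed Clifford element. Conjugating, the twisted spinor becomes parallel for \(g_0\), which is the statement's last claim. Substituting into \(\tau=\cR(\phi,\Psi)\) gives, as in \cite{CJSZ15}, a magnetic-geodesic equation \(\nabla_{\dot\gamma}\dot\gamma=c\,K(\gamma)\,J\dot\gamma\), where \(c\) is a constant quadratic in \(|\psi|\). The tension is \(cK(\gamma)J\dot\gamma\), so it is nowhere zero exactly when \(K\ne0\) along \(\gamma\) and \(c\ne0\).

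\textbf{Curves.} We need closed curves \(\gamma\) lying in the open set \(\{K\ne0\}\), which is nonempty by hypothesis, satisfying \(\kappa_g=cK\) for some \(c\ne0\). They must also satisfy a spinor periodicity condition: after one period the phase \(\int_\gamma\kappa_g\,ds\), combined with the holonomy of the frame, must match \(\epsilon(\lambda_1)\). The spinor must also be periodic in the transverse directions, which is automatic because \(\phi\) is constant there and \(\Psi\) is constant along them; transverse generators with \(\epsilon=-1\) are handled by taking \(\Psi\) antiperiodic, i.e.\ twisting by a constant phase in \(\pair{x}{e^\perp}\). This requires care, and we would choose \(\lambda_1\) adapted to the spin structure. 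The curves are obtained as in the abstract. Work in a small geodesic disk \(U\) where \(K\) has a fixed sign. Minimize length or energy among short embedded loops in \(U\) with prescribed signed curvature area \(\int_{\text{inside}}K=a\), with \(a\) small. The Lagrange multiplier yields \(\kappa_g=cK\). Gauss--Bonnet gives \(\int\kappa_g=2\pi-a\). As \(a\to0\) the minimizers are nearly round small circles, so they lie in \(U\) and are null-homotopic. Adjusting \(a\), or traversing the loop \(m\) times with the spinor scaled, hits the required phase condition. Scaling \(|\dot\gamma|\) by traversing the loop \(j\) times per period of \(\lambda_1\) makes \(E\to\infty\). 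Because \(U\) is a disk, orientability of \(N\) plays no role.

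\textbf{Main obstacle.} The hardest step is the constrained minimization. We must show that minimizers exist, are smooth embedded loops that stay inside \(U\) and do not collapse, and have a nonzero multiplier. We must also control the total geodesic curvature \(2\pi-a\) precisely enough to meet the discrete sign condition from the spin structure. We would first try a blow-up or rescaling argument: after rescaling, \(U\) becomes nearly Euclidean with \(K\approx K(p)\), the isoperimetric minimizers converge to circles, and an implicit-function argument gives a smooth family in \(a\). Continuity of the phase in \(a\) then selects admissible values.
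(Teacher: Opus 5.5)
Your overall architecture matches the paper: conformal reduction, a curve ansatz with a phase cancelling the frame rotation, a curvature-area constraint, and a turning identity for the phase. Two steps fail as written.

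\textbf{The source side and the spin structure.} The map \(x\mapsto\gamma(\pair{x}{e})\) with \(e\) parallel to a lattice vector \(\lambda_1\) descends to \(\RR^2/\Lambda\) only if \(\pair{\Lambda}{e}\subset L\ZZ\). This fails for a generic lattice, since \(\pair{\lambda_2}{\lambda_1}/|\lambda_1|^2\) is then irrational. The vector \(e\) must instead point along a dual-lattice vector, i.e.\ \(\phi\) must factor through an integral circle map \(t=p_1x+p_2y\) with \(E_1=(dt)^\sharp/|dt|\).

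More seriously, twisting \(\Psi\) by \(e^{\ii\mu\pair{x}{e^\perp}}\) with \(\mu\ne0\) destroys the Dirac equation. Then \(\tnabla_{e^\perp}\Psi=\ii\mu\Psi\) contributes \(\ii\mu\,e^\perp\cdot\Psi\) to \(\D^\phi\Psi\). For the \(e\cdot\)-eigenspinor used in the cancellation, this term lies in the opposite eigenspace from \(e\cdot\Psi\), so nothing in the \(e\)-direction can cancel it.

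A parallel twisted spinor of your form is constant along \(\ker dt\). Hence its transition along the \(i\)-th generator is exactly \(e^{\ii mp_i\Theta_\gamma}\) for \(m\) traversals, and all four spin signs must come from the choice of \(p\) and \(m\). The paper takes \(p=(1,0)\) and \(m=2m_0j\) for the trivial structure, and \(p=(2\sigma,2\delta)\) and \(m=(2j+1)m_0\) otherwise, where \(e^{\ii m_0\Theta_\gamma}=-1\).

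\textbf{The constrained minimization.} The step you flag as the main obstacle is not closed by the route you suggest. Embedded loops in a fixed disk \(U\) do not form a closed admissible class, since embeddedness is lost under uniform or weak \(H^1\) limits. Nothing keeps minimizers off \(\partial U\), where no Euler--Lagrange equation is available. A blow-up plus implicit-function argument runs into the translational degeneracy of the limiting isoperimetric problem. Fixing the center would need a nondegenerate maximum of \(|K|\), which an arbitrary metric need not have.

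The missing idea is a concentration estimate that needs no nondegeneracy:
\begin{itemize}
\item Minimize \(\mathscr E\) over all \(H^1\) loops with \(\mathscr E\le E_0\). Each such loop lies in a normal disk, where \(\mathcal A_K=\int_\gamma\eta\) is defined without embeddedness.
\item Small circles about a global maximum of \(|K|\) give \(I(b)\le b/\mathcal K+o(b)\).
\item The sharp mean-zero Poincar\'e inequality, applied to a primitive \(\eta=c\,dy^2\) with \(|c|\le(|K(p_\infty)|+o(1))|y^1|\), gives \(b\le(|K(p_\infty)|+o(1))\mathscr E(\gamma_b)\).
\end{itemize}
Together these force \(|K(p_\infty)|=\mathcal K\). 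So the minimizers lie where \(|K|>\mathcal K/2\) and are genuine constrained critical points.

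Embeddedness is not needed for the phase either. The identity \(\Theta_\gamma=2\pi n_\gamma-\mathcal A_K(\gamma)\) holds for every immersed loop in a disk, so prescribing \(\mathcal A_K=\pi/m_0\) gives \(e^{\ii m_0\Theta_\gamma}=-1\) exactly, with no continuity argument in the area. A zero multiplier is excluded because it would produce a closed geodesic shorter than \(\inj(N,h)\).
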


No sign condition on \(K\), no nonzero total curvature, and no
symmetry of either metric is required. The maps have images in
small coordinate disks, so their nonzero tension is
supported by local target curvature even though they are
null-homotopic.

For sphere targets, this homotopy conclusion is sharp. Yang's
vanishing theorem \cite{Yang09}, also proved in \cite{CJSZ19},
implies that a Dirac-harmonic map from a torus to a Riemannian
two-sphere with nonzero degree is uncoupled, for every source
spin structure. Maps from a torus to \(S^2\) are classified by
their degree \cite{Hatcher02}, and every metric on \(S^2\) is
nonflat by Gauss--Bonnet. Thus, for any fixed source metric,
spin structure, and target metric on \(S^2\), a homotopy class
contains a coupled solution if and only if its degree is zero.

For strictly negative curvature, the same geometric mechanism
can be implemented in prescribed nontrivial homotopy classes.
The \emph{trivial spin structure} on a torus is the one admitting
nonzero parallel spinors for a flat conformal representative;
its spinor coefficients are periodic in both basic directions.

\begin{theorem}[Prescribed homotopy classes in negative curvature]
\label{thm:negative-homotopy}
Let \((N,h)\) be a closed oriented Riemannian surface with
\(K<0\), and let \((M,g)\) be a closed oriented Riemannian
torus with the trivial spin structure. Every homotopy class
\(\mathfrak h\in[M,N]\) contains a sequence of smooth coupled
Dirac-harmonic maps \((\phi_j,\Psi_j)\), with
\([\phi_j]=\mathfrak h\), satisfying
\eqref{eq:main-conclusions}. For a flat metric conformal to
\(g\), their corresponding twisted spinors are parallel.
\end{theorem}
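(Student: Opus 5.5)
Since the statement is conformally invariant in the appropriate sense, I first reduce to a flat torus. By uniformization, $g=e^{2u}g_0$ with $g_0$ flat, $M=\RR^2/\Gamma$. The map equation is conformally invariant up to the factor $e^{-2u}$ in $\tau$. Spinors transform by $\Psi\mapsto e^{-u/2}\Psi$ under the standard identification, so it suffices to work with $g_0$. For the trivial spin structure, parallel spinors on $(M,g_0)$ have constant coefficients. Rank, nonvanishing tension and divergence of energy are preserved by the conformal change; energy is in fact invariant.

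The ansatz is $\phi(x,y)=\gamma(x)$ for a curve $\gamma:\RR\to N$. For a homotopy class $\mathfrak h$, note that $\pi_1(N)$ has trivial center and abelian subgroups cyclic. Hence, after a linear change of basis of $\Gamma$ (an $SL_2(\ZZ)$ change, which permutes but preserves the trivial spin structure), every class is represented by a map factoring through a circle $x\mapsto$ a loop in a free homotopy class $c$ of $N$, possibly trivial. This is where $K<0$, i.e.\ asphericity, is used: $[M,N]$ is given by conjugacy classes of homomorphisms $\ZZ^2\to\pi_1N$. I then seek $\gamma$ a closed magnetic geodesic in class $c$, of period $L_j=$ the $x$-period divided by $k_j$. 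Here $k_j$ is a multiple traversal or a reparametrization, so that energy grows.

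Next I build the spinor. Take $\Psi=\sum \psi_a\otimes e_a$ along a moving frame $e_1=\dot\gamma/|\dot\gamma|$, $e_2=Je_1$, which exists since $N$ is oriented. I choose $\psi_a$ as constant parallel spinors multiplied by a phase $e^{\ii\theta(x)\omega}$, with $\omega$ Clifford multiplication by the volume element. The phase is chosen so that $\theta'$ cancels the frame rotation $\kappa_g|\dot\gamma|$. With this choice $\D^\phi\Psi=0$ reduces to an algebraic identity. The coupling $\cR(\phi,\Psi)$ is then normal to $\gamma$ and proportional to $K(\gamma)\,|\dot\gamma|$ times a spinor bilinear. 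The map equation becomes the magnetic geodesic equation $\nabla_{\dot\gamma}\dot\gamma=\lambda K(\gamma)|\dot\gamma| J\dot\gamma$ from \cite{CJSZ15}, with a constant $\lambda$ determined by the spinor amplitude.

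The periodicity condition is the key point. The spinor returns after one period with phase $e^{\ii\Theta\omega}$, where $\Theta=\int\kappa_g\,ds$ is the total geodesic curvature. For the trivial spin structure this phase must be $+1$, i.e.\ $\Theta\in 2\pi\ZZ$ (or $4\pi\ZZ$ depending on the half-angle). For a closed curve in the free class $c$, the magnetic equation gives $\kappa_g=\lambda K$, so $\Theta=\lambda\int K\,ds$. When $c$ is nontrivial I take $\gamma$ near the closed geodesic, with small $\lambda$, via a perturbation argument. I would try first to obtain a one-parameter family and use an intermediate value argument on the rotation. For an embedded or primitive loop the relevant phase is $\Theta$ modulo the holonomy-free twist. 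I adjust by replacing the $k_j$-fold cover to make $k_j\Theta\in 4\pi\ZZ$ only if $\Theta$ is rational. This is the main obstacle. The fallback is to use the curvature-area variational construction stated earlier in the paper: minimize length among loops in $c$ with prescribed signed curvature area $\int_{\text{region}}K\,dA=A$. The Lagrange multiplier gives the magnetic equation, and $\Theta$ varies continuously in $A$. Using $K<0$, Gauss--Bonnet for the annulus between $\gamma$ and the geodesic relates $\Theta$ to $A$. I then choose $A_j$ with $\Theta(A_j)\in4\pi\ZZ$.

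Finally I check the remaining properties. $\tau\neq0$ everywhere since $K<0$ and $\lambda\neq0$. The rank is one. Taking higher $j$ with longer curves or larger windings gives $E\to\infty$.
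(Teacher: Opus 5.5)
Your reductions are fine: passing to a flat conformal representative, and factoring the class through a circle using asphericity and the fact that abelian subgroups of $\pi_1N$ are cyclic. The spinor ansatz, however, produces no coupling as written. Write $\Psi=\psi_1\otimes e_1+\psi_2\otimes e_2$ in your target frame $e_1=\dot\gamma/|\dot\gamma|$, $e_2=Je_1$, and let $E_1$ be the unit source vector with $d\phi(E_1)=v\,e_1$. Suppose one operator $e^{\theta T}$ acts on both coefficients. Since $\nabla e_1=\kappa e_2$ and $\nabla e_2=-\kappa e_1$ in arc length, parallelness forces $\theta'T\psi_1=\kappa\psi_2$ and $\theta'T\psi_2=-\kappa\psi_1$. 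Hence $T^2\psi_1=-\psi_1$, $\theta'=\pm\kappa$, and $\psi_2=\pm T\psi_1$. For the real volume element $E_1\cdot E_2\cdot$, the operator $e^{\ii\theta\omega}$ is not unitary, and no nonzero solution exists. For the complex volume element, $\ii\omega$ is $\pm E_1\cdot E_2\cdot$, so $\psi_2=\pm E_1\cdot E_2\cdot\psi_1$. Skew-adjointness then gives
\[
 \Rea\langle\psi_1,E_1\cdot\psi_2\rangle=\mp\Rea\langle\psi_1,E_2\cdot\psi_1\rangle=0 .
\]
By \eqref{eq:vector-curvature}, $\cR(\phi,\Psi)=v\Rea\langle\psi_1,E_1\cdot\psi_2\rangle R(e_1,e_2)e_1$. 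So the curvature force vanishes identically, and the map equation forces $\gamma$ to be a geodesic. The phase must instead be a complex scalar paired with the complexified frame: take $\Psi=Ae^{\ii\alpha}\chi_\epsilon\otimes(U+\ii V)$ with $\alpha'=\kappa$ and $E_1\cdot\chi_\epsilon=\ii\epsilon\chi_\epsilon$. Then $R(U,V)\Psi=\ii K\Psi$ and $\cR=-\epsilon vKA^2U$. For the trivial spin structure the closing condition is $e^{\ii\Theta}=1$, i.e.\ $\Theta\in2\pi\ZZ$. No half-angle enters, because the phase compensates a rotation of target vectors.

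The global step in a nontrivial class is also incomplete. Multiple traversals are not available there. The map $\gamma^k\circ t$ induces $a_i\mapsto c^{kp_i}$ on $\pi_1$, which is a different class for $k\ne1$. So one traversal must supply both the phase and $E\to\infty$, and ``larger windings'' cannot be used. The perturbative route near the closed geodesic $\gamma_0$ fails outright. Since $K<0$, $\Theta=\lambda\int K\,d\ell$ is nonzero for $\lambda\ne0$ and small near $\gamma_0$, so it cannot lie in $2\pi\ZZ$. Everything therefore rests on your fallback. That is the paper's route, but the steps that make it a proof are missing:

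(a) The curves need not bound an embedded annulus with $\gamma_0$. The constraint must be $\mathcal A_{K,\gamma_0}(\gamma)=\int\mathcal H^*(K\,dA_h)$ over an arbitrary free homotopy $\mathcal H$ from $\gamma_0$ to $\gamma$. Independence of $\mathcal H$ requires that every map $T^2\to N$ has degree zero, which follows from your own cyclic-image factorization.

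(b) A parallel frame along $\mathcal H$ and Stokes' theorem give $\Theta_\gamma\equiv-\mathcal A_{K,\gamma_0}(\gamma)\pmod{2\pi}$. So you may take $A_j=2\pi j$ directly. Continuity of $\Theta$ in $A$ is not available, since minimizers need not be unique, and it is not needed.

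(c) Minimize energy rather than length. Produce a competitor for every $A$, for instance by attaching to $\gamma_0$ a small circle traversed $n_0$ times. Prove that $\mathcal A_{K,\gamma_0}$ passes to uniform plus weak $H^1$ limits.

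(d) Your claim $\lambda\ne0$ needs proof. If $\lambda=0$, the minimizer is a closed geodesic in $\mathfrak c$, hence a reparametrization of $\gamma_0$ by uniqueness for $K<0$, which forces $A=0$.

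(e) Energy growth requires $\inf\{\mathscr E(\gamma):\mathcal A_{K,\gamma_0}(\gamma)=A\}\to\infty$ as $|A|\to\infty$. This holds because bounded energy yields a uniformly and weakly convergent subsequence, along which the relative area converges.

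Finally, the null-homotopic class must also be handled. You can use the short-loop construction of \Cref{thm:nonflat-surface}, as the paper does, or the same scheme relative to a constant loop.
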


Here the target metric is fixed and need not have constant
curvature. For the other source spin structures, our construction
requires a compatibility condition with the prescribed map class;
see \Cref{prop:compatible-homotopy}. The null-homotopic class is
available for every spin structure by \Cref{thm:nonflat-surface}.

The construction begins with a latitude of the round sphere.
A spinor phase cancels the rotation of the target frame, producing
a parallel twisted spinor. Along a general unit-speed surface
curve \(\gamma\), the map equation then reduces to
\begin{equation}\label{eq:intro-balance}
 \kappa=aK\circ\gamma,\qquad a\ne0\ \text{constant},
\end{equation}
the magnetic-geodesic relation in \cite{CJSZ15}.
The total geodesic curvature
\(\Theta_\gamma=\int_\gamma\kappa\,d\ell\) determines the
spinor phase after one traversal. Thus the curve must satisfy
both \eqref{eq:intro-balance} and the prescribed spinor closing
condition. Conformal covariance transfers the construction to
arbitrary source metrics.

For the metric \(dr^2+F(r)^2d\varphi^2\), the circles have
total turning \(2\pi F'(r)\). Where \(F''\ne0\), the ratio
\(\kappa/K\) is constant along each circle, and varying \(r\)
realizes the required phases, giving the following local result.

\begin{theorem}[A totally geodesic warped annulus]
\label{thm:warped-annulus}
Let
\[
 (P,h_P)=(I\times S^1,\ dr^2+F(r)^2d\varphi^2),
 \qquad F>0,\qquad F''\not\equiv0,
\]
be a two-dimensional warped annulus, where \(I\subset\RR\) is
an open interval and \(S^1=\RR/2\pi\ZZ\). Suppose a Riemannian
manifold \((N,h)\) admits a totally geodesic isometric immersion
of \((P,h_P)\). Then every closed Riemannian spin torus admits
a sequence of coupled Dirac-harmonic maps into \(N\) satisfying
\eqref{eq:main-conclusions}. For a flat conformal representative
of the source metric, the twisted spinors are parallel.
\end{theorem}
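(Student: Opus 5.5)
Our plan is to reduce the statement to the general curve construction sketched in the introduction and then to produce the required closed curves inside \(P\) from its rotational symmetry. Fix a flat metric \(g_0=|dz|^2\) on \(M=\CC/\Lambda\) conformal to \(g\). By conformal covariance of \eqref{eq:DH} in dimension two, a solution for \(g_0\) gives one for \(g\) after rescaling the spinor by the appropriate power of the conformal factor. Rank, the support of the tension, and energy are unaffected, since \(\tau_g=e^{-2u}\tau_{g_0}\) and \(E\) is conformally invariant. So we may assume \(g\) is flat.

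\textbf{The ansatz.} Pick a primitive vector \(v\in\Lambda\) and a linear functional \(\ell\) with \(\ell(v)=0\) and \(\ell(\Lambda)\subset L\ZZ\). Set \(\phi=\iota\circ\gamma\circ\ell\), where \(\iota:P\to N\) is the totally geodesic immersion and \(\gamma(s)=(r_0,s/F(r_0))\) is the unit-speed circle of length \(L=2\pi F(r_0)\), possibly traversed \(k\) times. Then \(\rank d\phi=1\). Because \(\iota\) is totally geodesic, the second fundamental form vanishes along \(P\). Hence \(\phi^*TN\) splits parallelly along \(\gamma\) into \(TP\) plus the normal bundle, and \(\nabla^N\) restricts to \(\nabla^P\) on \(TP\). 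Moreover, \(R^N\) restricted to \(TP\) is the Gauss curvature \(K\) of \(P\), by the Gauss equation with \(A=0\). We take \(\Psi\) to be valued in \(\Sigma M\otimes\gamma^*TP\), with components in the frame \(\{\dot\gamma,\nu\}\). This frame rotates relative to a \(\nabla^P\)-parallel frame at rate \(\kappa=F'(r_0)/F(r_0)\). Following the introduction, we choose the spinor coefficients as a phase \(e^{\ii\theta(\ell(z))}\) with \(\theta'=\pm\kappa/2\), together with the Clifford action, so that this rotation is cancelled and \(\Psi\) is parallel, hence harmonic. The map equation then reduces to \eqref{eq:intro-balance}. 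Here \(\tau(\phi)=\kappa\,\nu\,|d\ell|^2\), and \(\cR\) equals \(K(\gamma)\) times a quadratic expression in \(\Psi\). Since \(\kappa\) and \(K=-F''/F\) are constant along the circle, we can choose the amplitude \(|\Psi|^2\) to be a constant \(a\) solving \(\kappa=aK\). This requires \(F''(r_0)\ne0\), which holds on an open set of \(r_0\). If additionally \(F'(r_0)\ne0\), then \(\tau\ne0\) everywhere. The sign of \(a\) is adjusted by choosing which chirality or phase convention is used.

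\textbf{Spin periodicity, the main obstacle.} The spinor phase after one traversal of \(\gamma\) is \(e^{\pm\ii\Theta/2}\) with \(\Theta=2\pi F'(r_0)\). After \(k\) traversals it is \(e^{\pm\ii\pi kF'(r_0)}\). The spin structure on \(M\) prescribes a sign \(\epsilon\in\{\pm1\}\) along the direction where \(\ell\) increases by \(kL\), and the spinor must be periodic along \(v\), which is automatic since \(\ell(v)=0\). Choosing \(v\) so that the spin structure is periodic along \(v\) may require adjusting the basis; among \(v,w,v+w\) one direction always has the trivial (periodic) sign. The condition we need is \(kF'(r_0)\in 2\ZZ\) or \(kF'(r_0)\in 2\ZZ+1\), according to \(\epsilon\). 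Since \(F''\not\equiv0\), \(F'\) is nonconstant on some open interval \(J\) where \(F''\ne0\). We may also shrink \(J\) so that \(F'\ne0\) on \(J\). Then \(F'(J)\) contains an interval, and it therefore contains rationals \(p/q\) with \(p\) of either parity; in particular \(p\) can be taken odd or even as needed. Taking \(k\) a suitable multiple of \(q\), the closing condition is met, and we can pick infinitely many such \((r_0,k)\).

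\textbf{Energy growth.} The energy is \(E=\tfrac12|d\ell|^2\vol(M)\). Using multiples \(k\to\infty\), with \(\ell\) scaled so that \(\ell(\Lambda)=kL\ZZ\), we get \(|d\ell|\to\infty\) and hence \(E\to\infty\). Finally, \(\phi_j\) is smooth, and the spinors are parallel for \(g_0\) by construction. The delicate points to verify carefully are the precise half-angle phase bookkeeping against the spin structure's sign and the sign/Clifford convention making \(\cR\) a nonzero multiple of \(K\nu\). I expect these to match the latitude computation already performed for \Cref{thm:nonflat-surface}.
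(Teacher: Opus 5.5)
Your strategy matches the paper's. You pass to a flat conformal metric and compose an orbit \(r=r_0\) of \(P\), traversed \(k\) times, with a linear circle map whose kernel direction carries a periodic spin sign. Your choice among \(v,w,v+w\) is equivalent to the paper's choice of \(p\) in \eqref{eq:choose-direction}. You then cancel the frame rotation by a spinor phase, use the constancy of \(\kappa\) and \(K\) on the orbit to fix the amplitude, and use the nonconstancy of \(F'\) to close the spinor.

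\textbf{The phase rate is wrong.} This is the step you flagged, and it fails. The twisted bundle is \(\Sigma M\otimes_{\RR}\phi^*TN\), and its target factor carries the Levi-Civita connection, not a spin connection of the target. With \(U=-JV\) one has \(\nabla_V(U+\ii V)=-\ii\kappa(U+\ii V)\). So for \(\Psi=Ae^{\ii\theta}\chi\otimes(U+\ii V)\) one gets \(\tnabla_{E_1}\Psi=\ii v(\theta'-\kappa)\Psi\), where \(v\) is the speed and \(\theta'\) the arc-length derivative. Cancellation forces \(\theta'=\kappa\), the full turning rate. With \(\theta'=\pm\kappa/2\) you get \(\tnabla_{E_1}\Psi\ne0\), hence \(\D^\phi\Psi=E_1\cdot\tnabla_{E_1}\Psi\ne0\). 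No rearrangement of spinor components produces a half angle. Writing \(\Psi=\xi_1\otimes U+\xi_2\otimes V\), parallelism is the rotation system \(\xi_1'=\kappa\xi_2\), \(\xi_2'=-\kappa\xi_1\). Its solutions carry only the phases \(e^{\pm\ii\alpha}\) with \(\alpha'=\kappa\).

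\textbf{Consequence for closing, and the fix.} Let \(\epsilon\) be your prescribed sign along \(w\). The closing condition after \(k\) traversals is \(e^{2\pi\ii kF'(r_0)}=\epsilon\). That is, \(kF'(r_0)\in\ZZ\) for a periodic sign and \(kF'(r_0)\in\ZZ+\tfrac12\) for an antiperiodic one. Your antiperiodic choice \(kF'(r_0)\in2\ZZ+1\) gives monodromy \(+1\) in the correct bookkeeping. So even after correcting the rate, the spinor would not descend to that spin structure.

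The repair is easy. Choose \(r_0\in J\) with \(F'(r_0)=p/q\) in lowest terms and \(q\) even. Dyadic rationals with odd numerator are such values and are dense in \(F'(J)\). Then take \(k=\tfrac q2(2j+1)\), \(j\in\NN\); in the periodic case \(k=qj\) suffices. Note also that \(\cR\) is linear in \(d\phi\), so the balance reads \(\kappa=\pm A^2K/v\), with the sign fixed by the chirality of \(\chi\). The amplitude must therefore grow with \(k\).

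\textbf{Comparison with the paper.} With these corrections your argument is complete. It differs from the paper only in keeping one orbit fixed and letting \(k\to\infty\) along an arithmetic progression. The paper instead takes \(q_m\in\ZZ+\sigma_*\) nearest to \(mF'(r_*)\) and moves the radius, \(r_m=(F'|_J)^{-1}(q_m/m)\to r_*\). That avoids selecting a special rational value of \(F'\).
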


To prove \Cref{thm:nonflat-surface} without symmetry, we minimize
the energy of short loops at prescribed signed curvature area
\(\mathcal A_K(\gamma)=\int_\gamma\eta\), where
\(d\eta=K\,dA_h\) on a coordinate disk. The constraint serves
two purposes: its Euler--Lagrange equation gives
\eqref{eq:intro-balance}, and the identity
\[
 e^{\ii\Theta_\gamma}=e^{-\ii\mathcal A_K(\gamma)}
\]
controls the spinor phase. A sharp Poincar\'e estimate and comparison
with small circles force the minimizers to concentrate where
\(|K|\) is maximal. Their short lengths exclude closed geodesics,
ensuring nonzero geodesic curvature and hence coupling. Suitable
areas and traversal numbers give all source spin structures and
unbounded energy.

For \Cref{thm:negative-homotopy}, each nontrivial map class factors
through a closed target curve and an integral circle map on the
torus. We fix the circle map and vary the curve within its free
homotopy class, prescribing curvature area relative to the closed
geodesic in that class. This area is well defined because every
torus map into the target has degree zero. Strict negativity and
compactness keep \(|K|\) bounded away from zero. Varying the
relative area then gives the required phases and unbounded energy
without changing the map class.

We next record three consequences and extensions. First, adding
an independent harmonic circle component to the rank-one maps
gives coupled immersions. The product connection preserves the
spinor equation and the nonzero tension of the surface component.

\begin{corollary}[Coupled immersions into a product with a circle]
\label{cor:rank-two-product}
Let \((N,h)\) be a closed Riemannian surface with Gaussian
curvature \(K\not\equiv0\), and equip \(N\times S^1(1)\)
with the product metric. For every closed oriented Riemannian
torus \((M,g)\) and every spin structure on \(M\), there is a
sequence of smooth Dirac-harmonic maps
\((\Phi_j,\widehat\Psi_j):(M,g)\to N\times S^1(1)\) such that
\[
 \rank(d\Phi_j)=2,\qquad
 \tau_g(\Phi_j)\ne0\ \text{everywhere},\qquad
 E_g(\Phi_j)\longrightarrow\infty.
\]
For a flat metric conformal to \(g\), the corresponding twisted
spinors are parallel.
\end{corollary}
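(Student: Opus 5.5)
We take the sequence $(\phi_j,\Psi_j)$ from \Cref{thm:nonflat-surface} for the given $(M,g)$ and spin structure, and add a harmonic circle component. Set
\[
\Phi_j=(\phi_j,\theta),\qquad \widehat\Psi_j=(\Psi_j,0),
\]
where $\theta:M\to S^1(1)$ is a fixed harmonic map. The spinor is viewed as a section of $\Sigma M\otimes(\phi_j^*TN\oplus\theta^*TS^1)$ with vanishing second factor.

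\textbf{Choice of $\theta$.} Work first on a flat representative $g_0=e^{-2u}g$, writing $M=\RR^2/\Lambda$ with coordinates $(x,y)$. In the construction of \Cref{thm:nonflat-surface}, $\phi_j$ factors as $\gamma_j\circ\ell_j$, where $\gamma_j$ is a closed curve and $\ell_j:M\to\RR/L\ZZ$ is a linear map, say depending on the coordinate $x$. We choose $\theta$ to be a linear map $M\to\RR/2\pi\ZZ$ whose differential is not parallel to $d\ell_j$. Such a map exists because $\Lambda$ has rank two, so we may take an integral linear form $\theta$ independent from $\ell_j$. Then $\theta$ is harmonic and totally geodesic for $g_0$. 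Since in dimension two harmonicity and $\tau$ transform conformally ($\tau_g=e^{-2u}\tau_{g_0}$), $\theta$ is also harmonic for $g$.

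\textbf{Verification of the equations.} For the product metric, the Levi-Civita connection, the pullback connection, and the curvature all split. The twisted Dirac operator on the $\phi_j^*TN$ summand is therefore $\D^{\phi_j}$, so $\D^{\Phi_j}\widehat\Psi_j=(\D^{\phi_j}\Psi_j,0)=0$. The curvature of $S^1$ vanishes, and the mixed curvature terms of the product vanish, so
\[
\cR(\Phi_j,\widehat\Psi_j)=(\cR(\phi_j,\Psi_j),0).
\]
The tension also splits: $\tau(\Phi_j)=(\tau(\phi_j),\tau(\theta))=(\tau(\phi_j),0)$. Thus \eqref{eq:DH} holds, and $\tau_g(\Phi_j)\ne0$ everywhere because $\tau_g(\phi_j)\ne0$ everywhere. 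The conformal covariance statement from \Cref{thm:nonflat-surface} carries over unchanged, since the extra component of the spinor is zero; in particular the spinor remains parallel for $g_0$. The energy satisfies $E_g(\Phi_j)=E_g(\phi_j)+E_g(\theta)\to\infty$.

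\textbf{Rank.} We have $d\Phi_j=(d\phi_j,d\theta)$. Since $d\phi_j=\gamma_j'\,d\ell_j$ has rank one everywhere with kernel $\ker d\ell_j$, and $d\theta$ has rank one with kernel $\ker d\theta\neq\ker d\ell_j$, the combined differential has trivial kernel at every point. Hence $\rank(d\Phi_j)=2$.

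\textbf{Main obstacle.} The step needing care is confirming that the maps of \Cref{thm:nonflat-surface} have the linear factorization $\phi_j=\gamma_j\circ\ell_j$ with a fixed kernel direction. This comes from the construction, in which the curve is parametrized linearly along one lattice direction on the flat torus and the spinor phase closes along it. If the direction of $\ell_j$ varies with $j$, we choose $\theta_j$ depending on $j$, taking any integral form independent of $d\ell_j$. Its energy is bounded below by a positive constant, so it does not affect the divergence of $E_g(\Phi_j)$.
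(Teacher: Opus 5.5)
Your proposal is correct and follows the paper's proof. The paper likewise adds the fixed harmonic circle map $\zeta([x,y])=\exp(\ii(-p_2x+p_1y))$ to the maps $\phi_m=\gamma\circ\ell_m$ of \Cref{thm:nonflat-surface}, splits the equations as in \Cref{lem:harmonic-circle-product}, and obtains rank two from $dt\wedge(-p_2\,dx+p_1\,dy)=(p_1^2+p_2^2)\,dx\wedge dy\ne0$. Your hedge about a $j$-dependent direction is unnecessary, since $p$ is fixed by the spin structure, and the same factorization through the closed immersed curve $\pi\circ\widehat\gamma$ covers nonorientable $N$, which you did not mention but the paper treats explicitly.
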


Second, the construction extends to certain higher-dimensional
targets with their prescribed metrics. The additional issue is
that covariant derivatives and curvature may have components
normal to the two target directions used in the spinor.
\Cref{thm:ambient-curve} states the conditions along a closed
curve that exclude these extra components. A direct consequence
of \Cref{thm:nonflat-surface} and composition with a totally
geodesic immersion is the following.

\begin{corollary}[Totally geodesic surface targets]
\label{cor:nonflat-TG-surface}
If \((N,h)\) admits a totally geodesic isometric immersion of a
closed nonflat Riemannian surface, then every closed Riemannian
spin torus admits null-homotopic coupled Dirac-harmonic maps into
\(N\) satisfying \eqref{eq:main-conclusions}.
\end{corollary}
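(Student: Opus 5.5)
The plan is to compose the solutions of \Cref{thm:nonflat-surface} with the totally geodesic immersion. Let \(\iota:(S,h_S)\to(N,h)\) be the given totally geodesic isometric immersion of a closed nonflat surface \((S,h_S)\). \Cref{thm:nonflat-surface} applied to \((S,h_S)\) produces, for every closed Riemannian spin torus \((M,g)\), a sequence \((\phi_j,\Psi_j):(M,g)\to S\) of null-homotopic Dirac-harmonic maps satisfying \eqref{eq:main-conclusions}. I then set
\[
 \Phi_j=\iota\circ\phi_j,\qquad \widetilde\Psi_j=(\id\otimes d\iota)\Psi_j ,
\]
a section of \(\Sigma M\otimes\Phi_j^*TN\).

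The key step is to check that \((\Phi_j,\widetilde\Psi_j)\) solves \eqref{eq:DH} in \(N\).
\begin{enumerate}
 \item Because \(\iota\) is totally geodesic, its second fundamental form vanishes. Hence the pullback connection on \(\Phi_j^*TN\) restricts on \(\phi_j^*d\iota(TS)\) to the pullback of the Levi-Civita connection of \(S\). This gives \(\D^{\Phi_j}\widetilde\Psi_j=(\id\otimes d\iota)\D^{\phi_j}\Psi_j=0\).
 \item The composition formula \(\tau(\iota\circ\phi)=d\iota(\tau(\phi))+\operatorname{tr}_g\nabla d\iota(d\phi,d\phi)\) reduces, again by the vanishing second fundamental form, to \(\tau_g(\Phi_j)=d\iota(\tau_g(\phi_j))\).
 \item By the Gauss equation with zero second fundamental form, \(R^N(d\iota X,d\iota Y)d\iota Z=d\iota(R^S(X,Y)Z)\) for tangent vectors \(X,Y,Z\). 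Since \(\cR(\Phi_j,\widetilde\Psi_j)\) is built by contracting \(R^N\) with \(d\Phi_j\) and two spinor factors, all of which are tangent to \(\iota(S)\), this gives \(\cR(\Phi_j,\widetilde\Psi_j)=d\iota\,\cR(\phi_j,\Psi_j)\).
\end{enumerate}
Combining (2) and (3) yields the map equation \(\tau_g(\Phi_j)=\cR(\Phi_j,\widetilde\Psi_j)\). The one point needing care is the exact form of \(\cR\): it is \(\frac12\langle\Psi^A, e_\alpha\cdot\Psi^B\rangle R(\partial_A,\partial_B)d\phi(e_\alpha)\), and the identity \(\langle R^N(U,V)W,Z\rangle=\langle R^S(U,V)W,Z\rangle\) for tangent \(U,V,W,Z\) is exactly what is required.

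The conclusions \eqref{eq:main-conclusions} then transfer directly.
\begin{itemize}
 \item \(d\iota\) is injective, so \(\rank d\Phi_j=\rank d\phi_j=1\) and \(\tau_g(\Phi_j)=d\iota(\tau_g(\phi_j))\neq0\) everywhere.
 \item \(\iota\) is an isometric immersion, so \(E_g(\Phi_j)=E_g(\phi_j)\to\infty\).
 \item Null-homotopy is preserved under composition with \(\iota\).
\end{itemize}
The only genuine issue is bookkeeping in the curvature term, step (3). I do not expect a real obstacle there, since the Gauss equation settles it.
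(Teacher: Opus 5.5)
Your proposal is correct and is essentially the paper's proof. The paper applies \Cref{thm:nonflat-surface} to the surface and composes with \(\iota\) via \Cref{lem:totally-geodesic}, whose content is exactly your steps (1)--(3); rank, energy, and null-homotopy then transfer as you say.

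One small correction in step (3): the Gauss equation only gives the tangential part of \(R^N(X,Y)Z\). The normal part must also vanish, so that the curvature force has no normal component; the paper checks this via curvature symmetries. That vanishing comes from the Codazzi equation with \(\mathrm{II}=0\), or directly from your step (1) fact that \(\nabla^N\) preserves tangent fields. So the tangential identity \(\langle R^N(U,V)W,Z\rangle=\langle R^S(U,V)W,Z\rangle\) alone is not quite all that is required.
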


This includes product targets and the warped products described
in \Cref{sec:higher}. A closed totally geodesic surface is not
always needed: the local annulus of
\Cref{thm:warped-annulus} already gives the next result.

\begin{corollary}[Nonflat space forms]\label{cor:spaceforms}
Let $(N,h)$ be a nonflat Riemannian manifold of constant
sectional curvature $c$. Every closed Riemannian spin torus admits
null-homotopic coupled Dirac-harmonic maps into \(N\) satisfying
\eqref{eq:main-conclusions}. Their images can be contained in one
normal coordinate ball. In particular, this applies to closed
spherical and closed hyperbolic manifolds.
\end{corollary}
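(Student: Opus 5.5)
We reduce \Cref{cor:spaceforms} to \Cref{thm:warped-annulus} by exhibiting, inside a small normal coordinate ball of $N$, a totally geodesic isometric immersion of a two-dimensional warped annulus $dr^2+F(r)^2d\varphi^2$ with $F''\not\equiv0$. Fix $p\in N$ and a $2$-plane $\sigma\subset T_pN$, and take $0<\rho<\inj(p)$, shrinking $\rho$ when $c>0$ so that $\rho<\pi/\sqrt c$. Since $N$ has constant sectional curvature, $\exp_p$ maps the $\rho$-disk in $\sigma$ onto a totally geodesic surface $S\subset B_\rho(p)$. This holds because locally $N$ is isometric to the model space form, and there $\exp_p(\sigma)$ is the fixed set of a reflection, hence totally geodesic. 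The induced metric on $S\setminus\{p\}$ in geodesic polar coordinates is $dr^2+\operatorname{sn}_c(r)^2d\varphi^2$. Here $\operatorname{sn}_c(r)=\sin(\sqrt c\,r)/\sqrt c$ for $c>0$ and $\operatorname{sn}_c(r)=\sinh(\sqrt{-c}\,r)/\sqrt{-c}$ for $c<0$.

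Set $I=(0,\rho)$ and $F=\operatorname{sn}_c$. Then $F>0$ on $I$ and $F''=-cF$, which is nowhere zero because $c\ne0$; in particular $F''\not\equiv0$. The map $(r,\varphi)\mapsto\exp_p(r(\cos\varphi\, e_1+\sin\varphi\, e_2))$, with $e_1,e_2$ an orthonormal basis of $\sigma$, is then a totally geodesic isometric embedding of $(P,h_P)=(I\times S^1,dr^2+F^2d\varphi^2)$ into $N$. So \Cref{thm:warped-annulus} yields, for every closed Riemannian spin torus, a sequence of coupled Dirac-harmonic maps satisfying \eqref{eq:main-conclusions}.

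Two further claims must be read off from the construction behind \Cref{thm:warped-annulus}: the maps are null-homotopic, and their images lie in one normal ball. In that construction the maps are rank-one maps factoring through (multiple traversals of) a circle $\{r=r_0\}$ of $P$. Each such circle bounds the disk $\exp_p(\{|v|\le r_0\})\subset B_\rho(p)$, so the image lies in the ball and the map factors through a contractible set, hence is null-homotopic. This step carries the main risk. If the proof of \Cref{thm:warped-annulus} instead uses curves winding in the $I$-direction, or traverses the circle with an integer degree that is not obviously trivial in $N$, we still conclude null-homotopy from the fact that the circle is contractible in $B_\rho(p)$. We must also check that the set of radii $r_0$ realizing the required spinor phases, which depends on $F'(r_0)=\cos(\sqrt c\,r_0)$ or $\cosh(\sqrt{-c}\,r_0)$, is nonempty inside $(0,\rho)$. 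If it is not, we shrink nothing but enlarge the number of traversals, which \Cref{thm:warped-annulus} allows.

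Closed spherical and hyperbolic manifolds are the special cases $c>0$ and $c<0$.
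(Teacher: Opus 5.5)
Your proposal is correct and follows essentially the same route as the paper. You realize the radial disk $\exp_p(\sigma\cap B_\rho(0))$ as the fixed set of a reflection, take $F=S_c$ with $F''=-cF\ne0$, and apply \Cref{thm:warped-annulus}. The paper checks directly, from the polar form $dr^2+S_c(r)^2g_{S^{n-1}}$ of the metric on the normal ball, that the reflection $\exp_p\circ A\circ\exp_p^{-1}$ is an isometry. That is exactly what your appeal to a local isometry with the model space amounts to on the whole ball.

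Your hedging about radii and traversal numbers is unnecessary: \Cref{thm:warped-annulus} needs only $F>0$ and $F''\not\equiv0$ and selects the phases internally. Null-homotopy is immediate because the entire image of the annulus already lies in the contractible normal ball.
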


Third, if the target metric may be chosen, such an annulus can
be placed in any prescribed open set of a manifold of dimension
at least two. No global restriction on the target topology is
then needed.

\begin{theorem}[A local choice of target metric]
\label{thm:metric-choice}
Let \(N\) be a smooth closed manifold with \(\dim N\ge2\),
\(h_{\mathrm{ref}}\) a Riemannian metric, and
\(\mathcal O\subset N\) a nonempty open set. There is a smooth
metric \(h\), equal to \(h_{\mathrm{ref}}\) outside
\(\mathcal O\), such that every closed Riemannian spin torus
admits null-homotopic coupled Dirac-harmonic maps into \((N,h)\)
satisfying \eqref{eq:main-conclusions}. All their images lie in one
fixed coordinate ball in \(\mathcal O\). The metric \(h\) is
chosen independently of the source metric and spin structure.
\end{theorem}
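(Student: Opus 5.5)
The plan is to reduce \Cref{thm:metric-choice} to \Cref{thm:warped-annulus}. We modify \(h_{\mathrm{ref}}\) inside a coordinate ball \(B\Subset\mathcal O\) so that \((N,h)\) contains a totally geodesic isometric copy of a small warped annulus with \(F''\not\equiv0\). Fix a coordinate chart \(x=(x_1,\dots,x_n):B_{3\rho}\to\RR^n\) with closure in \(\mathcal O\), and write \(x=(y,z)\) with \(y\in\RR^2\) and \(z\in\RR^{n-2}\). If \(n=2\) there is no \(z\) variable.

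On the inner ball \(B_\rho\) I would prescribe the model metric
\[
h_0=g_S(y)+|dz|^2 ,
\]
where \(g_S\) is the round metric of curvature \(1\) on a small disk in \(\RR^2\). A convenient choice is the pullback of the round sphere under stereographic coordinates. In geodesic polar coordinates about \(y=0\), this metric is \(dr^2+\sin^2 r\,d\varphi^2\). Take \(F(r)=\sin r\), so \(F''=-\sin r\ne0\) on any annulus \(0<r_1<r<r_2<\rho/2\). The slice \(P=\{z=0\}\) is totally geodesic in \((B_\rho,h_0)\): it is the fixed set of the isometry \(z\mapsto-z\), or simply a factor of a Riemannian product. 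Hence the warped annulus \((r_1,r_2)\times S^1\) embeds totally geodesically. Any metric works on the inner ball as long as it is a product there.

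Next I would glue. Choose a cutoff \(\chi\) with \(\chi=1\) on \(B_{\rho}\) and support in \(B_{2\rho}\), and set
\[
h=\chi h_0+(1-\chi)h_{\mathrm{ref}}.
\]
A convex combination of positive definite forms is positive definite, so \(h\) is a smooth Riemannian metric. It equals \(h_{\mathrm{ref}}\) outside \(B_{2\rho}\subset\mathcal O\) and equals \(h_0\) on \(B_\rho\). The total geodesy of \(P\) is a local condition involving only \(h\) and its first derivatives on a neighborhood of \(P\). Since \(P\subset B_\rho\) and \(h=h_0\) on the open set \(B_\rho\), the slice \(P\) stays totally geodesic in \((N,h)\). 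The metric is built before any source data is chosen, so it is independent of the source metric and the spin structure.

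Finally, \Cref{thm:warped-annulus} applied to \(P\hookrightarrow(N,h)\) gives, for every closed Riemannian spin torus, a sequence of coupled Dirac-harmonic maps satisfying \eqref{eq:main-conclusions}. Two remaining points need the structure of that construction, not just its statement.

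\textbf{Images in one coordinate ball.} The maps constructed there factor through circles \(\{r=r_*\}\subset P\), by the description preceding \Cref{thm:warped-annulus}. Hence all images lie in \(P\subset B_\rho\), which is a fixed coordinate ball in \(\mathcal O\).

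\textbf{Null-homotopy.} Each map factors through a circle in the contractible ball \(B_\rho\), so it is null-homotopic.

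The step I expect to be the main obstacle is this reliance on the internal form of \Cref{thm:warped-annulus}. One must confirm that the circles used there, including those with varying \(r\) chosen to realize the required phases for every spin structure, stay within the fixed annulus \((r_1,r_2)\). If that theorem's proof needs a range of \(r\) where \(F'\) sweeps an interval, then \(F'(r)=\cos r\) does vary on \((r_1,r_2)\), and the multiple traversals supply the needed phases. I would check explicitly that the phase condition, which involves \(2\pi F'(r)\) times the traversal number, can be met for all four spin structures with \(r\) restricted to \((r_1,r_2)\). I would also check that the energy divergence comes from increasing traversal numbers rather than from leaving the annulus.
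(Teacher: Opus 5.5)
Your proposal is correct and follows the paper's route. You implant a model metric on a coordinate ball by a convex-combination cutoff, so that the ball contains a totally geodesic annulus with $F=\sin r$, and then apply \Cref{thm:warped-annulus}. Its construction keeps the orbit radii $r_m$ in a fixed subinterval $J\subset I$ and obtains energy growth from $m\to\infty$, which settles the points you flag. The only difference is that you implant the product $g_S+|dz|^2$, so total geodesy of the slice is immediate, whereas the paper implants the full round metric $dr^2+\sin^2r\,g_{S^{n-1}}$ and gets the totally geodesic radial disk from the reflection argument in the proof of \Cref{cor:spaceforms}.
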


\medskip
\noindent\textbf{Related independent work.}
After completing the present work, we became aware of an
independently developed manuscript by
B\'ek\'esi~\cite{Bekesi26}, who constructs coupled
Dirac-harmonic maps from closed product manifolds, including
tori, using periodic Dirac-geodesics.
The present paper establishes existence for arbitrary
prescribed nonflat metrics on closed target surfaces and,
for strictly negatively curved targets, in prescribed
homotopy classes for tori with the trivial spin structure.
Our proofs use constrained variational constructions based
on local and relative curvature area.

\medskip
\noindent\textbf{Organization of the paper.}
\Cref{sec:prelim} fixes conventions and records conformal covariance
and composition with totally geodesic immersions.
\Cref{sec:examples,sec:annuli} develop the explicit examples and
the warped-annulus construction.
\Cref{sec:surfaces,sec:homotopy} prove
\Cref{thm:nonflat-surface,thm:negative-homotopy} using local and
relative curvature area, respectively.
\Cref{sec:higher,sec:metric-choice} treat higher-dimensional targets
with a fixed metric and the local choice of target metric.

\medskip
\noindent\textbf{Declaration of AI assistance.}
The authors directed the research, initially guiding ChatGPT
to construct examples from tori to the round two-sphere.
The authors soon recognized that the construction extends to
warped annuli and that the total turning is related to signed
curvature area. Building on this observation, the authors  used constrained
energy minimization and the Lagrange multiplier method to remove
the symmetry assumption and prove \Cref{thm:nonflat-surface}.
During the development of the proof of \Cref{thm:negative-homotopy},
ChatGPT helped identify relevant references on homotopy classes
of maps.
The authors verified the arguments, corrected errors, filled in
missing details, and organized the manuscript. The authors take
full responsibility for its mathematical content and proofs.

\section{Conventions and preliminary facts}\label{sec:prelim}
Let $(M,g)$ be an oriented Riemannian spin surface and let $(N,h)$ be a
Riemannian manifold. We use the full complex spinor bundle $\Sigma M$,
of complex rank two, with Hermitian inner product linear in the first
argument. Clifford multiplication and the spin connection
$\nabla^\Sigma$ satisfy
\begin{align}
 X\cdot Y\cdot+Y\cdot X\cdot&=-2g(X,Y)\id,
 \label{eq:clifford}\\
 \pair{X\cdot\xi}{\zeta}&=-\pair{\xi}{X\cdot\zeta},
 \label{eq:clifford-skew}\\
 \nabla^\Sigma_X(Y\cdot\xi)
 &=(\nabla^M_XY)\cdot\xi+Y\cdot\nabla^\Sigma_X\xi.
 \label{eq:clifford-compatible}
\end{align}
These are the standard Riemannian spin conventions; see \cite[Chapter~II]{LM89}.

For a smooth map $\phi:M\to N$, write $\nabla^\phi$ for the pullback
Levi--Civita connection on the real bundle $\phi^*TN$. The twisted
spinor bundle is
\[
 E_\phi=\Sigma M\otimes_{\RR}\phi^*TN.
\]
It is a complex bundle by scalar multiplication on the spinor factor:
$z(\xi\otimes A)=(z\xi)\otimes A$ for $z\in\CC$ and $A$ real.
Its Hermitian metric and connection are defined by
\begin{align}
 \pair{\xi\otimes A}{\zeta\otimes B}
 &=\pair{\xi}{\zeta}\,h(A,B),\label{eq:twisted-metric}\\
 \tnabla_X(\xi\otimes A)
 &=\nabla^\Sigma_X\xi\otimes A+
   \xi\otimes\nabla^\phi_XA.\label{eq:twisted-connection}
\end{align}
Clifford multiplication acts on the first factor.

The real tensor product used above has a natural complex-linear
identification
\begin{equation}\label{eq:complex-tensor}
 E_\phi\cong\Sigma M\otimes_{\CC}(\phi^*TN\otimes_{\RR}\CC).
\end{equation}
Under \eqref{eq:complex-tensor},
\(\xi\otimes(v\otimes z)\) corresponds to \((z\xi)\otimes v\).
Only the real vector bundle is complexified; no complex structure
on the target manifold is required. The compatible Hermitian
extension of \(h\) is
\[
 h_{\mathrm H}(zA,wB)=z\overline w\,h(A,B),
 \qquad A,B\in\phi^*TN,\quad z,w\in\CC.
\]
Thus \(\xi\otimes(U+\ii V)=\xi\otimes U+(\ii\xi)\otimes V\),
and for real orthonormal \(U,V\) its squared norm is \(2|\xi|^2\).

For a local orthonormal frame $(e_1,e_2)$ of $TM$, set
\begin{align}
 \D^\phi\Psi
 &=\sum_{\alpha=1}^2e_\alpha\cdot\tnabla_{e_\alpha}\Psi,
 \label{eq:dirac}\\
 \tau(\phi)
 &=\sum_{\alpha=1}^2
 \left\{\nabla^\phi_{e_\alpha}\bigl(d\phi(e_\alpha)\bigr)
       -d\phi(\nabla^M_{e_\alpha}e_\alpha)\right\}.
 \label{eq:tension}
\end{align}
Our target curvature convention is
\[
 R^N(X,Y)Z
 =\nabla^N_X\nabla^N_YZ-\nabla^N_Y\nabla^N_XZ-\nabla^N_{[X,Y]}Z.
\]
Consequently the unit sphere satisfies
\begin{equation}\label{eq:sphere-curvature}
 R^{S^2}(X,Y)Z=h(Y,Z)X-h(X,Z)Y.
\end{equation}
Define the real vector field $\cR(\phi,\Psi)$ along $\phi$ by
\begin{equation}\label{eq:curvature-term}
 h(\cR(\phi,\Psi),Y)
 =\frac12\sum_{\alpha=1}^2
 \Rea\pair{e_\alpha\cdot
 R^N(Y,d\phi(e_\alpha))\Psi}{\Psi},
\end{equation}
for every real target vector field $Y$ along $\phi$.
Here curvature acts on the target factor:
\[
 R^N(Y,A)(\xi\otimes B)=\xi\otimes R^N(Y,A)B.
\]

On a closed source, the twisted Dirac operator is formally
self-adjoint, so the spinor term in \eqref{eq:intro-functional}
has a real integral. No real-part symbol is therefore needed in
that functional.

Unless otherwise indicated, all maps and sections are smooth;
the variational competitors in \Cref{sec:surfaces,sec:homotopy} are initially
only of class \(H^1\).

In a real orthonormal target frame, write
\(\Psi=\sum_j\psi^j\otimes\mathsf e_j\).
Curvature symmetries and Clifford skew-adjointness give the
equivalent vector formula
\begin{equation}\label{eq:vector-curvature}
 \cR(\phi,\Psi)=\frac12\sum_{\alpha,j,k}
 \Rea\pair{\psi^j}{e_\alpha\cdot\psi^k}
 R(\mathsf e_j,\mathsf e_k)d\phi(e_\alpha).
\end{equation}
This agrees with \cite[Equation~(1.6)]{CJLW06}, with curvature
indices translated to our convention. We suppress the subscript
in \(E_g(\phi)\) when the source metric is understood.
On a solution, \(\mathcal L(\phi,\Psi)=E_g(\phi)\).

We record two natural operations used in the constructions.
\begin{lemma}[Conformal covariance]\label{lem:conformal}
Let \(\widehat g=e^{2u}g\) and let
\(\beta:\Sigma_gM\to\Sigma_{\widehat g}M\) be the canonical fiberwise
unitary identification for the same spin structure. Set
\(\widehat\Psi=e^{-u/2}\beta\Psi\), acting trivially on the target
factor. Then
\begin{align}
 \D^\phi_{\widehat g}\widehat\Psi
 &=e^{-3u/2}\beta(\D^\phi_g\Psi),\label{eq:D-conf}\\
 \tau_{\widehat g}(\phi)&=e^{-2u}\tau_g(\phi),\label{eq:tau-conf}\\
 \cR_{\widehat g}(\phi,\widehat\Psi)
 &=e^{-2u}\cR_g(\phi,\Psi).\label{eq:R-conf}
\end{align}
Coupling and map energy are preserved.
\end{lemma}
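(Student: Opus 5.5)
The plan is to reduce everything to the classical conformal covariance of the untwisted Dirac operator in dimension two, together with the conformal behaviour of the Levi-Civita quantities on the source. The target factor and the pullback connection \(\nabla^\phi\) are unaffected by a change of source metric. Write \(\widehat e_\alpha=e^{-u}e_\alpha\) for the orthonormal frame of \(\widehat g\). We use the standard formula (Hitchin; Hijazi)
\[
 \D_{\widehat g}\bigl(e^{-\frac{n-1}{2}u}\beta\xi\bigr)
 =e^{-\frac{n+1}{2}u}\beta(\D_g\xi),\qquad n=\dim M=2,
\]
which holds for sections \(\xi\) of \(\Sigma M\). Here \(\beta\) commutes with Clifford multiplication in the sense \(\beta(X\cdot\xi)=(e^{u}X)\cdot\beta\xi\).

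\textbf{Step 1 (Dirac equation).} Work locally with \(\Psi=\sum_j\psi^j\otimes\mathsf e_j\) in a real orthonormal frame of \(\phi^*TN\). Then \(\tnabla_X\Psi=\sum_j\nabla^\Sigma_X\psi^j\otimes\mathsf e_j+\psi^j\otimes\nabla^\phi_X\mathsf e_j\). Apply the twisted Dirac operator to \(\widehat\Psi=\sum_j e^{-u/2}\beta\psi^j\otimes\mathsf e_j\). The first group of terms is \(\sum_j\D_{\widehat g}(e^{-u/2}\beta\psi^j)\otimes\mathsf e_j=e^{-3u/2}\sum_j\beta(\D_g\psi^j)\otimes\mathsf e_j\) by the classical formula. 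The second group is \(\sum_{\alpha,j}\widehat e_\alpha\cdot e^{-u/2}\beta\psi^j\otimes\nabla^\phi_{\widehat e_\alpha}\mathsf e_j\). Using \(\widehat e_\alpha=e^{-u}e_\alpha\) twice, once in Clifford multiplication via \(\beta\) and once in the derivative, this equals \(e^{-3u/2}\sum\beta(e_\alpha\cdot\psi^j)\otimes\nabla^\phi_{e_\alpha}\mathsf e_j\). The second group is a zeroth-order connection term, so it scales exactly like the first, and adding them gives \eqref{eq:D-conf}. The only delicate point is the precise Clifford compatibility of \(\beta\), which I would take from the standard reference \cite[Chapter~II]{LM89} or Hitchin. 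The conformal weight \(e^{-u/2}\) is exactly what cancels the derivative of \(u\) in the spin connection.

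\textbf{Step 2 (tension).} In dimension two the tension is \(\operatorname{tr}_g\nabla d\phi\). The second fundamental form changes by \(\nabla^{\widehat g}d\phi(X,Y)=\nabla^g d\phi(X,Y)-d\phi\bigl(X(u)Y+Y(u)X-g(X,Y)\nabla u\bigr)\). Take the \(\widehat g\)-trace, which is \(e^{-2u}\) times the \(g\)-trace. The correction becomes \(-d\phi\bigl((2-n)\nabla u\bigr)=0\) for \(n=2\). This gives \eqref{eq:tau-conf}.

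\textbf{Step 3 (curvature term).} Use \eqref{eq:vector-curvature}. The coefficient \(\Rea\pair{\widehat\psi^j}{\widehat e_\alpha\cdot\widehat\psi^k}\) equals \(e^{-u}\cdot e^{-u}\cdot\Rea\pair{\beta\psi^j}{\beta(e_\alpha\cdot\psi^k)}=e^{-u}\Rea\pair{\psi^j}{e_\alpha\cdot\psi^k}\), since \(\beta\) is unitary. The factor \(d\phi(\widehat e_\alpha)=e^{-u}d\phi(e_\alpha)\) supplies another \(e^{-u}\), which gives \eqref{eq:R-conf}.

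\textbf{Conclusions.} By \eqref{eq:tau-conf}, \(\tau_{\widehat g}\) and \(\tau_g\) vanish at the same points, so coupling is preserved. The same formulas show that \((\phi,\Psi)\) solves \eqref{eq:DH} for \(g\) if and only if \((\phi,\widehat\Psi)\) solves it for \(\widehat g\). The energy density satisfies \(|d\phi|^2_{\widehat g}\dvol_{\widehat g}=e^{-2u}|d\phi|_g^2\,e^{2u}\dvol_g\), so \(E_{\widehat g}(\phi)=E_g(\phi)\). I expect the main obstacle to be Step 1, specifically the sign and weight conventions for \(\beta\) together with Clifford multiplication. The rest is bookkeeping.
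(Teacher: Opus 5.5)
Your proposal is correct and follows essentially the paper's route. The paper simply cites the standard two-dimensional covariance formulas for the Dirac operator and the tension, whereas you derive them: the twisted Dirac case from Hitchin's untwisted formula plus the zeroth-order connection term, and the tension from the change of the Levi-Civita connection. The curvature term is then handled by the same count of $e^{-u}$ factors as in the paper.

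Two bookkeeping slips need fixing:
\begin{itemize}
 \item The intertwining relation should read $\beta(X\cdot\xi)=(e^{-u}X)\cdot\beta\xi$, i.e.\ $\widehat e_\alpha\cdot\beta\xi=\beta(e_\alpha\cdot\xi)$. This is what your Steps~1 and~3 actually use.
 \item In Step~3 the two spinor weights contribute $e^{-u/2}\cdot e^{-u/2}=e^{-u}$, not $e^{-u}\cdot e^{-u}$.
\end{itemize}
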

\begin{proof}
The Dirac and tension transformations are the standard
two-dimensional conformal covariance formulas; see
\cite[Lemma~3.1]{CJLW06} and \cite[Chapter~II]{LM89}.
For the curvature term, take
\(\widehat e_\alpha=e^{-u}e_\alpha\) and use
\(\widehat e_\alpha\cdot\beta\xi=\beta(e_\alpha\cdot\xi)\).
The two spinor factors in \eqref{eq:curvature-term} contribute
\(e^{-u}\), and \(d\phi(\widehat e_\alpha)\) contributes another
\(e^{-u}\), proving \eqref{eq:R-conf}.
The conformal factors in \(|d\phi|^2d\vol_g\) cancel as well.
\end{proof}

To regard a solution as taking values in a larger target, we use the following fact.
\begin{lemma}\label{lem:totally-geodesic}
Let \(\iota:(P,h_P)\to(N,h)\) be a totally geodesic isometric immersion.
If \((\phi,\Psi)\) solves \eqref{eq:DH} into \(P\), then
\((\iota\circ\phi,(\id\otimes d\iota)\Psi)\) solves it into \(N\).
The tension is \(d\iota(\tau(\phi))\), so coupling is preserved.
\end{lemma}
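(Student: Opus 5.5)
The plan is to verify the two equations in \eqref{eq:DH} for \(\widetilde\phi=\iota\circ\phi\) and \(\widetilde\Psi=(\id\otimes d\iota)\Psi\) by comparing the pullback connections. Everything rests on one pointwise identity. Since \(\iota\) is totally geodesic, its second fundamental form vanishes. Hence for every section \(A\) of \(\phi^*TP\) and every vector \(X\) on \(M\),
\[
 \nabla^{\widetilde\phi}_X\bigl(d\iota(A)\bigr)=d\iota\bigl(\nabla^\phi_XA\bigr).
\]
This is the Gauss formula along the map \(\phi\): locally extend \(A\), or write it in a frame \(A=\sum a^j\,\mathsf e_j\circ\phi\), and apply \(\nabla^N_{d\iota(Y)}d\iota(Z)=d\iota(\nabla^P_YZ)+\mathrm{II}(Y,Z)\) with \(\mathrm{II}=0\). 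The fact that \(\iota\) is only an immersion is harmless, because every computation is local on \(P\), where \(\iota\) is an embedding.

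With this identity in hand, the Dirac equation follows directly. By \eqref{eq:twisted-connection}, \(\tnabla^{\widetilde\phi}_X\widetilde\Psi=(\id\otimes d\iota)\tnabla^\phi_X\Psi\). Clifford multiplication acts only on the spinor factor, so by \eqref{eq:dirac}
\[
 \D^{\widetilde\phi}\widetilde\Psi=(\id\otimes d\iota)\D^\phi\Psi=0.
\]
For the tension, apply the identity with \(A=d\phi(e_\alpha)\) in \eqref{eq:tension}, which gives \(\tau(\widetilde\phi)=d\iota(\tau(\phi))\). Since \(d\iota\) is injective, coupling is preserved.

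The step that needs the most care is the curvature term. The Gauss equation with \(\mathrm{II}=0\) shows that for \(Y,A,B\) tangent to \(P\),
\[
 R^N(d\iota Y,d\iota A)d\iota B=d\iota\bigl(R^P(Y,A)B\bigr).
\]
However, \eqref{eq:curvature-term} requires testing against all \(Y\in\widetilde\phi^*TN\), including normal directions. I will therefore use the vector formula \eqref{eq:vector-curvature}. Choose an orthonormal frame of \(\widetilde\phi^*TN\) whose first members are \(d\iota(\mathsf e_j)\), where the \(\mathsf e_j\) are an orthonormal frame along \(\phi\). The components \(\psi^k\) of \(\widetilde\Psi\) along the normal frame vectors vanish, so only tangential indices occur in \eqref{eq:vector-curvature}. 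Then \(R^N(d\iota\mathsf e_j,d\iota\mathsf e_k)d\iota\,d\phi(e_\alpha)=d\iota\bigl(R^P(\mathsf e_j,\mathsf e_k)d\phi(e_\alpha)\bigr)\), which yields
\[
 \cR(\widetilde\phi,\widetilde\Psi)=d\iota\,\cR(\phi,\Psi).
\]
Combining this with the tension identity gives \(\tau(\widetilde\phi)=d\iota\,\tau(\phi)=d\iota\,\cR(\phi,\Psi)=\cR(\widetilde\phi,\widetilde\Psi)\), which completes the proof.

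The main obstacle is precisely this point: the normal component of \(\cR\) must vanish. Working with \eqref{eq:vector-curvature} rather than \eqref{eq:curvature-term} handles it, since curvature of a totally geodesic submanifold preserves the tangent bundle.
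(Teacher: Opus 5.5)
Your proof is correct and is essentially the paper's argument. The paper tests \eqref{eq:curvature-term} against normal \(Y\) and uses pair symmetry to reduce to the fact that \(R^N(Z,W)\) preserves \(TP\) for tangent \(Z,W\), while you use the equivalent vector form \eqref{eq:vector-curvature} in an adapted frame, whose derivation already contains that symmetry step. One point to tighten is your justification of \(R^N(d\iota Y,d\iota A)\,d\iota B=d\iota\bigl(R^P(Y,A)B\bigr)\): the Gauss equation gives only its tangential part, whereas the vanishing normal part (the crux you identify) comes from the Codazzi equation with \(\mathrm{II}\equiv0\), or directly from \(\nabla^N_XZ=\nabla^P_XZ\) being tangent for tangent fields, as the paper notes.
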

\begin{proof}
Identify \(TP\) with its image. Since its second fundamental form
vanishes, \(\nabla^N_XZ=\nabla^P_XZ\) for tangent fields. Thus the
twisted connections, tensions, and tangential curvature terms agree.
For normal \(Y\) and tangent \(X,Z,W\), curvature symmetry gives
\[
 h(R^N(Y,X)Z,W)=h(R^N(Z,W)Y,X)
              =-h(Y,R^N(Z,W)X)=0.
\]
The last equality follows because tangent covariant differentiation
preserves \(TP\). Substituting this in \eqref{eq:curvature-term} proves
that the normal curvature-force component vanishes.
\end{proof}

\section{Explicit examples on closed tori}\label{sec:examples}
\subsection{Spinors on the flat torus}\label{subsec:flat-spinors}

For $a_0>0$, define
\begin{equation}\label{eq:torus}
 M_{a_0}=(\RR/2\pi\ZZ)^2,\qquad
 g_{a_0}=a_0^2dx^2+dy^2,\qquad
 e_1=a_0^{-1}\partial_x,\quad e_2=\partial_y.
\end{equation}
The orientation is that of $dx\wedge dy$. The displayed tangent frame
is global, orthonormal, and parallel.
We write \([x,y]\) for the point of the torus represented by
the angular coordinates \(x,y\). Differentiation uses local real
representatives of these coordinates; \(dx,dy\) and the coordinate
vector fields are globally defined. In the flat-torus constructions,
spinor formulas are local expressions in parallel spinor frames.
Their transition signs will be checked to obtain global sections.

On a simply connected flat coordinate neighborhood, choose a
parallel unitary spinor frame induced by a local spin lift of the
displayed tangent frame. Clifford multiplication by the
\emph{fixed} tangent vectors \(e_1,e_2\) is represented by
\begin{equation}\label{eq:matrices}
 C_1=\begin{pmatrix}\ii&0\\0&-\ii\end{pmatrix},
 \qquad
 C_2=\begin{pmatrix}0&1\\-1&0\end{pmatrix}.
\end{equation}
The choice concerns a frame of $\Sigma M_{a_0}$, not a change of the
tangent frame $(e_1,e_2)$.

Indeed, choose a unit \(+\ii\)-eigenspinor \(\varepsilon_1\)
of \(e_1\cdot\) and set \(\varepsilon_2=-e_2\cdot\varepsilon_1\).
The Clifford relations make this a unitary basis with matrices
\eqref{eq:matrices}. Parallel transport within the coordinate
neighborhood preserves these matrices by
\eqref{eq:clifford-compatible}.

If $\xi$ denotes the column of spinor coefficients in this frame, then
\begin{equation}\label{eq:spin-coordinates}
 e_\alpha\cdot\xi=C_\alpha\xi,\qquad
 \nabla^\Sigma_{e_1}\xi=a_0^{-1}\partial_x\xi,\qquad
 \nabla^\Sigma_{e_2}\xi=\partial_y\xi.
\end{equation}
Relative to the parallel tangent frame, the four spin structures are
indexed by
\[
 (\sigma,\delta)\in\left\{0,\tfrac12\right\}^2.
\]
The transition signs of the local spinor frames along the two
fundamental directions require the coefficient columns to satisfy
\begin{align}
 \xi(x+2\pi,y)&=e^{2\pi\ii\sigma}\xi(x,y),\label{eq:spin-x}\\
 \xi(x,y+2\pi)&=e^{2\pi\ii\delta}\xi(x,y).\label{eq:spin-y}
\end{align}
These are the transition rules for local spinor coefficients:
\(0\) gives a periodic sign and \(1/2\) an antiperiodic sign.
They also apply to the spinor coefficients of a twisted section
in a global target frame along the map. Since the signs are
constant, the local connection formulas agree on overlaps.

Set
\begin{equation}\label{eq:chi}
 \chi=\binom10,\qquad |\chi|=1,\qquad C_1\chi=\ii\chi.
\end{equation}
For $q\in\RR$, the nonzero spinor coefficient $e^{\ii qx}\chi$
satisfies the prescribed identifications exactly when
\begin{equation}\label{eq:frequency}
 q\in\ZZ+\sigma,\qquad \delta=0.
\end{equation}
The two translations multiply the coefficient by
\(e^{2\pi\ii q}\) and \(1\). The local parallel spinor
\(\chi\) itself is global only for the trivial spin structure.

\subsection{The latitude construction}

Let $m\in\mathbb{N}$ and $0<\theta<\pi/2$. Write
\begin{equation}\label{eq:parameters}
 s=\sin\theta,\qquad c=\cos\theta,\qquad
 a_0=ms,\qquad q=mc,\qquad \kappa=\frac cs=\frac{q}{a_0}>0.
\end{equation}
The target is $N=S^2(1)\times S^1(1)$ with its product metric, where
$S^2(1)\subset\RR^3$ and $S^1(1)\subset\CC$ have their standard
induced metrics. Define
\begin{align}
 \gamma(x)&=(s\cos(mx),s\sin(mx),c),\label{eq:latitude}\\
 \phi([x,y])&=(\gamma(x),e^{\ii y}):M_{a_0}\longrightarrow N.
 \label{eq:map}
\end{align}
The coordinates in \eqref{eq:latitude} are Euclidean coordinates in
$\RR^3$. The parameter $\theta$ is the polar angle measured from the
positive third axis. Thus the image of $\gamma$ is the latitude circle
at height $c$, with Euclidean radius $s$. As $x$ runs from $0$ to
$2\pi$, the curve traverses that circle $m$ times. In particular,
the integer condition on $m$ makes $\phi$ globally defined.

Along the sphere component, set
\begin{align}
 U(x)&=(c\cos(mx),c\sin(mx),-s),\label{eq:U}\\
 V(x)&=(-\sin(mx),\cos(mx),0).\label{eq:V}
\end{align}
Direct Euclidean inner products give
\[
 \pair{U}{\gamma}=\pair{V}{\gamma}=\pair{U}{V}=0,
 \qquad |U|=|V|=|\gamma|=1.
\]
Thus $U,V$ are a real orthonormal frame of the sphere tangent plane
along $\gamma$. The vector $U$ points along increasing polar angle,
and $V$ points along increasing azimuth. We also write $U,V$ for their
lifts $(U,0),(V,0)$ to the product target, and set
\[
 W(y)=(0,\ii e^{\ii y}).
\]
Then $(U,V,W)$ is a real orthonormal frame of $\phi^*TN$, periodic in
both variables.

\begin{lemma}\label{lem:geometry}
The map $\phi$ is an isometric immersion and satisfies
\begin{align}
 d\phi(e_1)&=V,& d\phi(e_2)&=W,\label{eq:dphi}\\
 \nabla^\phi_{e_1}U&=\kappa V,&
 \nabla^\phi_{e_1}V&=-\kappa U,&
 \nabla^\phi_{e_1}W&=0,\label{eq:connection-x}\\
 \nabla^\phi_{e_2}U&=0,&
 \nabla^\phi_{e_2}V&=0,&
 \nabla^\phi_{e_2}W&=0.\label{eq:connection-y}
\end{align}
Its tension field is
\begin{equation}\label{eq:tension-value}
 \tau(\phi)=-\kappa U.
\end{equation}
For $m=1$ it is an embedding; for $m>1$ it covers the corresponding
embedded latitude torus $m$ times in the first direction.
\end{lemma}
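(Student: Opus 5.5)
The proof is a direct Euclidean computation, since both factors of the target sit in Euclidean spaces: \(S^2(1)\subset\RR^3\) and \(S^1(1)\subset\CC\). The plan has four steps.

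\textbf{Differential.} From \eqref{eq:latitude},
\[
 \partial_x\gamma=ms\,(-\sin(mx),\cos(mx),0)=ms\,V,
\]
and \(\partial_y e^{\ii y}=\ii e^{\ii y}\). With \(a_0=ms\) and \(e_1=a_0^{-1}\partial_x\), this gives \(d\phi(e_1)=V\) and \(d\phi(e_2)=W\). Since \(V\) and \(W\) are orthonormal, \(\phi\) is an isometric immersion.

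\textbf{Covariant derivatives.} The pullback connection on each factor is the tangential projection of the Euclidean derivative. The \(e_2\) derivatives:
\begin{itemize}
\item \(U\) and \(V\) do not depend on \(y\), so \(\nabla^\phi_{e_2}U=\nabla^\phi_{e_2}V=0\).
\item \(\partial_y W=-e^{\ii y}\) is normal to \(S^1\), so \(\nabla^\phi_{e_2}W=0\).
\end{itemize}
The \(e_1\) derivatives:
\begin{itemize}
\item \(a_0^{-1}\partial_x U=a_0^{-1}mc\,V=\kappa V\), which is already tangent.
\item \(a_0^{-1}\partial_x V=-(m/a_0)(\cos(mx),\sin(mx),0)\). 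Decompose
\[
 (\cos(mx),\sin(mx),0)=s\gamma+cU,
\]
which can be checked componentwise. The tangential part of \(a_0^{-1}\partial_xV\) is then \(-(mc/a_0)U=-\kappa U\).
\item \(W\) does not depend on \(x\), so \(\nabla^\phi_{e_1}W=0\).
\end{itemize}

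\textbf{Tension.} The frame \((e_1,e_2)\) is parallel on the flat torus, so \eqref{eq:tension} reduces to
\[
 \tau=\nabla^\phi_{e_1}V+\nabla^\phi_{e_2}W=-\kappa U.
\]

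\textbf{Covering statement.} The map \(\phi([x,y])\) depends on \(x\) only through \(mx \bmod 2\pi\). It is injective exactly when \(m=1\), since the latitude is embedded for \(0<\theta<\pi/2\). For general \(m\), \(\phi\) is the embedding \([x',y]\mapsto(\gamma_1(x'),e^{\ii y})\) composed with the \(m\)-fold covering \(x'=mx\).

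The only point needing care is the decomposition of the horizontal radial vector into its \(\gamma\) and \(U\) components. Everything else is immediate.
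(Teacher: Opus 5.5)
Your proposal is correct and follows the paper's proof essentially step for step: Euclidean differentiation of \(\gamma,U,V,W\), tangential projection using the decomposition \((\cos(mx),\sin(mx),0)=s\gamma+cU\), the parallel source frame to reduce the tension to \(\nabla^\phi_{e_1}V+\nabla^\phi_{e_2}W\), and the pullback by \([x,y]\mapsto[mx,y]\) for the covering statement. The only implicit step is that, for \(m=1\), injectivity gives an embedding because the source torus is compact, and this is harmless.
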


\begin{proof}
Differentiation in \(\RR^3\) gives
\[
 \partial_x\gamma=a_0V,\qquad \partial_xU=qV,
\]
and, using \(cU+s\gamma=(\cos(mx),\sin(mx),0)\),
\begin{equation}\label{eq:ambient-V}
 \partial_xV=-qU-a_0\gamma.
\end{equation}
For a tangent field \(Z\) along the sphere, tangential projection
of its Euclidean derivative gives
\begin{equation}\label{eq:projection}
 \nabla^\gamma_{\partial_x}Z
 =\partial_xZ+\pair{Z}{\partial_x\gamma}\gamma.
\end{equation}
Consequently \(\nabla^\gamma_{\partial_x}U=qV\) and
\(\nabla^\gamma_{\partial_x}V=-qU\). Since
\(e_1=a_0^{-1}\partial_x\) and \(q/a_0=\kappa\), this proves
the first two identities in \eqref{eq:connection-x}.
In arc length, \(d\gamma/d\ell=V\) and
\(\nabla^{S^2}_VV=-\kappa U\), so \(\kappa=\cot\theta\)
is the signed geodesic curvature of the latitude.

The sphere fields are independent of $y$. On the circle factor,
$\partial_y(\ii e^{\ii y})=-e^{\ii y}$ is normal to the circle, so
the intrinsic covariant derivative of its unit tangent is zero.
The product connection has no mixed terms. This proves every
remaining identity in \eqref{eq:connection-x}--\eqref{eq:connection-y}.

The derivatives of $\phi$ are $d\phi(\partial_x)=a_0V$ and
$d\phi(\partial_y)=W$. Their lengths and orthogonality give
$\phi^*h=a_0^2dx^2+dy^2=g_{a_0}$, so $\phi$ is an isometric immersion.
Since the source frame is parallel, \eqref{eq:tension} becomes
\[
 \tau(\phi)=\nabla^\phi_{e_1}V+\nabla^\phi_{e_2}W=-\kappa U.
\]
For \(m=1\) this is the product of two embedded circles;
the general map is its pullback by \([x,y]\mapsto[mx,y]\).
\end{proof}

We now choose the spinor phase to cancel these target connection terms.

Assume
\begin{equation}\label{eq:resonance}
 q=m\cos\theta\in\ZZ+\sigma,\qquad
 \sigma\in\left\{0,\tfrac12\right\},
\end{equation}
and equip $M_{a_0}$ with spin structure $(\sigma,0)$. In the local
parallel spinor frames of \Cref{subsec:flat-spinors}, define
\begin{equation}\label{eq:spinor}
 \xi=\sqrt{\kappa}\,e^{\ii qx}\chi,\qquad
 \Psi=\xi\otimes U+(\ii\xi)\otimes V.
\end{equation}
Both target fields $U,V$ are real and periodic. Moreover,
\[
 \xi(x+2\pi,y)=e^{2\pi\ii q}\xi(x,y)
             =e^{2\pi\ii\sigma}\xi(x,y),
 \qquad \xi(x,y+2\pi)=\xi(x,y).
\]
Thus the local coefficients obey the spin identifications and
define a global smooth section \(\Psi\) of \(E_\phi\).

\begin{lemma}\label{lem:parallel}
The twisted spinor \eqref{eq:spinor} satisfies
\begin{equation}\label{eq:parallel}
 \tnabla\Psi=0,\qquad \D^\phi\Psi=0,\qquad |\Psi|^2=2\kappa.
\end{equation}
\end{lemma}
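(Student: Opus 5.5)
The plan is a direct computation of \(\tnabla\Psi\) from the connection formulas of \Cref{lem:geometry}, the coordinate description \eqref{eq:spin-coordinates} of the spin connection, and the product rule \eqref{eq:twisted-connection}.

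\emph{The \(e_2\)-direction.} The coefficient \(\xi=\sqrt\kappa\,e^{\ii qx}\chi\) is independent of \(y\), so \(\nabla^\Sigma_{e_2}\xi=\partial_y\xi=0\). By \eqref{eq:connection-y} we also have \(\nabla^\phi_{e_2}U=\nabla^\phi_{e_2}V=0\). Hence \(\tnabla_{e_2}\Psi=0\) immediately.

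\emph{The \(e_1\)-direction.} We have \(\nabla^\Sigma_{e_1}\xi=a_0^{-1}\partial_x\xi=\ii(q/a_0)\xi=\ii\kappa\xi\). Using \eqref{eq:connection-x}, the product rule gives
\[
\tnabla_{e_1}\Psi=(\ii\kappa\xi)\otimes U+\xi\otimes\kappa V+(\ii\cdot\ii\kappa\xi)\otimes V+(\ii\xi)\otimes(-\kappa U).
\]
The \(U\)-terms are \(\ii\kappa\xi-\ii\kappa\xi=0\), and the \(V\)-terms are \(\kappa\xi-\kappa\xi=0\). Equivalently, under \eqref{eq:complex-tensor} we have \(\Psi=\xi\otimes(U+\ii V)\), and \(\nabla^\phi_{e_1}(U+\ii V)=-\ii\kappa(U+\ii V)\). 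This rotation exactly cancels the phase derivative \(\ii\kappa\xi\), which is the phase cancellation the construction was designed for. The one point needing care is that the complex scalar \(\ii\) acts on the spinor factor, as in \eqref{eq:complex-tensor}. With that convention the bookkeeping is consistent.

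\emph{Consequences.} Since \(\tnabla\Psi=0\), the definition \eqref{eq:dirac} gives \(\D^\phi\Psi=\sum_\alpha e_\alpha\cdot\tnabla_{e_\alpha}\Psi=0\). For the norm, \(U\) and \(V\) are real orthonormal, so by \eqref{eq:twisted-metric} the cross terms vanish and \(|\Psi|^2=|\xi|^2+|\ii\xi|^2=2|\xi|^2=2\kappa|\chi|^2=2\kappa\). This agrees with the remark after \eqref{eq:complex-tensor}.

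These local computations are valid in each parallel spinor frame. The transition signs are constant and already verified before the lemma, so the identities hold globally. I expect no genuine obstacle; the only risk is sign conventions, and these are fixed by \eqref{eq:connection-x} and \(q/a_0=\kappa\).
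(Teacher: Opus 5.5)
Your proof is correct and follows the paper's own argument essentially line by line. You use $q/a_0=\kappa$ to get $\nabla^\Sigma_{e_1}\xi=\ii\kappa\xi$, expand $\tnabla_{e_1}\Psi$ by the product rule with \eqref{eq:connection-x} so that the $U$- and $V$-terms cancel, note that the $e_2$-derivative vanishes, and then read off $\D^\phi\Psi=0$ and $|\Psi|^2=2\kappa$ from $h(U,V)=0$; your reformulation $\nabla^\phi_{e_1}(U+\ii V)=-\ii\kappa(U+\ii V)$ is a harmless extra that matches the phase-cancellation mechanism the paper describes in \Cref{sec:annuli}.
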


\begin{proof}
Using $q/a_0=\kappa$ and \eqref{eq:spin-coordinates}, we have
\[
 \nabla^\Sigma_{e_1}\xi=\frac{\ii q}{a_0}\xi=\ii\kappa\xi,
 \qquad
 \nabla^\Sigma_{e_2}\xi=0.
\]
The spin connection is complex linear, so
$\nabla^\Sigma_{e_1}(\ii\xi)=-\kappa\xi$.
The tensor product rule and \eqref{eq:connection-x} now give
\begin{align*}
 \tnabla_{e_1}\Psi
 ={}&(\ii\kappa\xi)\otimes U
       +\kappa\xi\otimes V\\
    &-\kappa\xi\otimes V
       -\kappa(\ii\xi)\otimes U
 =0.
\end{align*}
Every term in $\tnabla_{e_2}\Psi$ vanishes by
\eqref{eq:connection-y} and $\nabla^\Sigma_{e_2}\xi=0$.
This proves parallelness, and hence $\D^\phi\Psi=0$.
Finally, the mixed term in the norm vanishes because $h(U,V)=0$.
Therefore \eqref{eq:twisted-metric} gives
\[
 |\Psi|^2=|\xi|^2+|\ii\xi|^2=2\kappa.
\]
\end{proof}

It remains to compute the curvature force in every target direction.

Since the target metric is a product and the circle factor has zero
curvature,
\begin{equation}\label{eq:product-curvature}
 R^N(Y,W)=0\quad\hbox{for all }Y,\qquad R^N(W,V)=0.
\end{equation}
On the sphere factor, \eqref{eq:sphere-curvature} gives
\begin{equation}\label{eq:sphere-actions}
 R^N(U,V)U=-V,\qquad R^N(U,V)V=U.
\end{equation}

\begin{lemma}\label{lem:force}
For the pair \eqref{eq:map}, \eqref{eq:spinor},
\begin{equation}\label{eq:force}
 \cR(\phi,\Psi)=-\kappa U.
\end{equation}
\end{lemma}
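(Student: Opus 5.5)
The plan is to compute \(\cR(\phi,\Psi)\) directly from the defining formula \eqref{eq:curvature-term}, testing it against each vector of the real orthonormal frame \((U,V,W)\) of \(\phi^*TN\). The frame is global, and \(\cR\) is determined by its pairings with \(U,V,W\). So it suffices to show
\[
h(\cR,U)=-\kappa,\qquad h(\cR,V)=0,\qquad h(\cR,W)=0.
\]

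First I will reduce the sum over \(\alpha\). By \eqref{eq:dphi}, \(d\phi(e_2)=W\), and by \eqref{eq:product-curvature}, \(R^N(Y,W)=0\) for every \(Y\). Hence the \(\alpha=2\) term vanishes identically, and only \(\alpha=1\) survives, with \(d\phi(e_1)=V\). It remains to evaluate
\[
\tfrac12\Rea\pair{e_1\cdot R^N(Y,V)\Psi}{\Psi}
\]
for \(Y=U,V,W\).

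The cases \(Y=V\) and \(Y=W\) are immediate. For \(Y=V\), \(R^N(V,V)=0\) by antisymmetry. For \(Y=W\), \(R^N(W,V)=0\) by \eqref{eq:product-curvature}. Thus \(\cR\) has no \(V\)- or \(W\)-component.

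For \(Y=U\), curvature acts only on the target factor. Using \eqref{eq:sphere-actions},
\[
R^N(U,V)\Psi=\xi\otimes(-V)+(\ii\xi)\otimes U .
\]
Clifford multiplication by \(e_1\) acts only on the spinor factor. Since \(\xi\) is a multiple of \(\chi\), and \(C_1\chi=\ii\chi\) by \eqref{eq:chi}, complex linearity gives \(e_1\cdot\xi=\ii\xi\) and \(e_1\cdot(\ii\xi)=-\xi\). Therefore
\[
e_1\cdot R^N(U,V)\Psi=-(\ii\xi)\otimes V-\xi\otimes U .
\]
Now pair this with \(\Psi=\xi\otimes U+(\ii\xi)\otimes V\) using \eqref{eq:twisted-metric}. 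The cross terms drop out because \(h(U,V)=0\). What remains is
\[
-|\xi|^2-|\ii\xi|^2=-2\kappa .
\]
Halving gives \(h(\cR,U)=-\kappa\), which proves \eqref{eq:force}.

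The only real hazard is sign bookkeeping. This covers the curvature convention in \eqref{eq:sphere-actions}, the convention that the Hermitian product is linear in the first slot, and the action of Clifford multiplication on \(\ii\xi\). I would double-check the result against \eqref{eq:vector-curvature} as an independent consistency check.
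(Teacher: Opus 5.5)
Your proposal is correct and matches the paper's proof step for step. You kill the \(\alpha=2\) term via \(d\phi(e_2)=W\) and the flat circle factor, and remove the \(V\)- and \(W\)-components using \(R^N(V,V)=0\) and \(R^N(W,V)=0\); for \(Y=U\) you get \(e_1\cdot R^N(U,V)\Psi=-\Psi\), hence \(h(\cR,U)=-\tfrac12|\Psi|^2=-\kappa\). The only cosmetic difference is that the paper first records \(e_1\cdot\Psi=\ii\Psi\) and \(R^N(U,V)\Psi=\ii\Psi\) separately and then composes them, whereas you expand the composite directly.
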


\begin{proof}
Because $C_1\chi=\ii\chi$, we have $e_1\cdot\xi=\ii\xi$.
By complex linearity of Clifford multiplication,
$e_1\cdot(\ii\xi)=-\xi$. Consequently
\[
 e_1\cdot\Psi=(\ii\xi)\otimes U-\xi\otimes V=\ii\Psi.
\]
Independently, \eqref{eq:sphere-actions} gives
\[
 R^N(U,V)\Psi=-\xi\otimes V+(\ii\xi)\otimes U=\ii\Psi.
\]
The two actions commute, since they act on different factors.
It follows that
\[
 e_1\cdot R^N(U,V)\Psi
 =e_1\cdot(\ii\Psi)=\ii(e_1\cdot\Psi)=-\Psi.
\]

We now evaluate \eqref{eq:curvature-term} against each member of the
real orthonormal target frame $(U,V,W)$.
The $\alpha=2$ contribution is zero for every $Y$, since
$d\phi(e_2)=W$ and \eqref{eq:product-curvature} applies.
For the remaining contribution with $Y=U$, we obtain
\[
 h(\cR(\phi,\Psi),U)
 =\frac12\Rea\pair{e_1\cdot R^N(U,V)\Psi}{\Psi}
 =-\frac12|\Psi|^2=-\kappa.
\]
For $Y=V$, the contribution is zero because $R^N(V,V)=0$.
For $Y=W$, it is zero because $R^N(W,V)=0$.
These are all three components of $\cR(\phi,\Psi)$, and prove
\eqref{eq:force}.
\end{proof}

The preceding calculations give the following closed family.

\begin{theorem}\label{thm:family}
Let $m\in\mathbb{N}$ and $\sigma\in\{0,\tfrac12\}$. Choose
\[
 q\in(\ZZ+\sigma)\cap(0,m),
\]
and define
\begin{equation}\label{eq:chosen-parameters}
 \theta=\arccos(q/m),\qquad
 a_0=\sqrt{m^2-q^2},\qquad
 \kappa=\frac{q}{\sqrt{m^2-q^2}}.
\end{equation}
Equip $(M_{a_0},g_{a_0})$ with spin structure $(\sigma,0)$. Then
\begin{align}
 \phi([x,y])
 &=\left(\left(\frac{a_0}{m}\cos(mx),\frac{a_0}{m}\sin(mx),\frac qm\right),
                   e^{\ii y}\right),\label{eq:family-map}\\
 \Psi
 &=\sqrt{\kappa}\,e^{\ii qx}\chi\otimes U
    +\ii\sqrt{\kappa}\,e^{\ii qx}\chi\otimes V
 \label{eq:family-spinor}
\end{align}
is a coupled Dirac-harmonic map into $S^2(1)\times S^1(1)$.
Here $U,V$ are given by \eqref{eq:U}--\eqref{eq:V} with the stated
$\theta$. The map is an isometric immersion, and
\begin{equation}\label{eq:family-conclusions}
 \tnabla\Psi=0,\qquad
 \tau(\phi)=\cR(\phi,\Psi)=-\kappa U\ne0,\qquad
 |\Psi|^2=2\kappa.
\end{equation}
Both source and target are closed.
\end{theorem}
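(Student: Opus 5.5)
The proof is essentially an assembly of the preceding lemmas once the parameters of \eqref{eq:chosen-parameters} are matched with those of \eqref{eq:parameters}. First, given \(q\in(\ZZ+\sigma)\cap(0,m)\), set \(\theta=\arccos(q/m)\in(0,\pi/2)\). The inequalities \(0<q<m\) give \(0<q/m<1\), so \(\theta\) lies strictly in this interval. Then \(c=\cos\theta=q/m\) and \(s=\sin\theta=\sqrt{m^2-q^2}/m\), which reproduces \(a_0=ms=\sqrt{m^2-q^2}\), \(q=mc\) and \(\kappa=c/s=q/\sqrt{m^2-q^2}>0\). Thus \eqref{eq:family-map} is exactly \eqref{eq:map}, with \(\gamma\) given by \eqref{eq:latitude}. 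Moreover, \eqref{eq:family-spinor} is \eqref{eq:spinor}, because \(\ii\sqrt\kappa e^{\ii qx}\chi=\ii\xi\). The resonance condition \eqref{eq:resonance} holds by the choice of \(q\).

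Next I will check that \(\Psi\) is a global section for the spin structure \((\sigma,0)\). By \eqref{eq:frequency}, the coefficient \(e^{\ii qx}\chi\) obeys the identifications \eqref{eq:spin-x}--\eqref{eq:spin-y} exactly when \(q\in\ZZ+\sigma\) and \(\delta=0\). The target fields \(U,V\) are real and \(2\pi\)-periodic, since \(m\in\NN\). This is the remark already made after \eqref{eq:spinor}, so \(\Psi\) is a smooth section of \(E_\phi\) on the closed torus \(M_{a_0}\).

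Finally, I will invoke the three lemmas in order. \Cref{lem:geometry} shows that \(\phi\) is an isometric immersion with \(\tau(\phi)=-\kappa U\). \Cref{lem:parallel} gives \(\tnabla\Psi=0\), hence \(\D^\phi\Psi=0\), together with \(|\Psi|^2=2\kappa\). \Cref{lem:force} gives \(\cR(\phi,\Psi)=-\kappa U\). Hence both equations in \eqref{eq:DH} hold. Since \(\kappa>0\) and \(U\) is a unit vector, \(\tau(\phi)\ne0\) everywhere, so the solution is coupled. Closedness is clear: \(M_{a_0}\) is a torus, and \(S^2(1)\times S^1(1)\) is compact without boundary.

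There is no genuine obstacle here. The only point needing care is the parameter bookkeeping, namely that \(0<q<m\) is exactly what makes \(\theta\in(0,\pi/2)\), \(a_0>0\) and \(\kappa>0\). This positivity is what the earlier lemmas assumed, and it also guarantees nonvanishing tension.
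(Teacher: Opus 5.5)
Your proposal is correct and follows the paper's proof: match \eqref{eq:chosen-parameters} with \eqref{eq:parameters}, so that \(q=m\cos\theta\) and \(\kappa=q/a_0>0\), and then apply \Cref{lem:geometry,lem:parallel,lem:force}. Your extra check that \(\Psi\) is globally defined repeats the remark the paper makes after \eqref{eq:spinor}, which the paper's proof relies on implicitly.
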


\begin{proof}
The parameter choices give \(q=m\cos\theta\) and
\(\kappa=q/a_0>0\). Apply
\Cref{lem:geometry,lem:parallel,lem:force}.
\end{proof}

The spinor normalization is determined by the map equation. If
$\sqrt{\kappa}$ in \eqref{eq:spinor} is replaced by any nonzero
complex constant $A$, the same derivative calculation still gives
$\tnabla\Psi=0$. Its squared norm is then $2|A|^2$, and the curvature
calculation gives $\cR(\phi,\Psi)=-|A|^2U$. Hence this fixed map
solves the system with that spinor precisely when $|A|^2=\kappa$.
A constant unit complex phase is free.

\subsection{Concrete examples}

\begin{example}[An embedded coupled torus]\label{ex:embedded}
Take
\[
 m=1,\qquad \sigma=\frac12,\qquad q=\frac12.
\]
Then
\[
 \theta=\frac{\pi}{3},\qquad
 a_0=\frac{\sqrt3}{2},\qquad
 \kappa=\frac1{\sqrt3}.
\]
The source and map are
\begin{align}
 M&=(\RR/2\pi\ZZ)^2,\qquad
 g=\frac34\,dx^2+dy^2,\label{eq:embedded-metric}\\
 \phi([x,y])
 &=\left(\left(\frac{\sqrt3}{2}\cos x,
              \frac{\sqrt3}{2}\sin x,\frac12\right),e^{\ii y}\right).
 \label{eq:embedded-map}
\end{align}
The spin structure is antiperiodic in $x$ and periodic in $y$.
Set
\[
 U(x)=\left(\frac12\cos x,\frac12\sin x,-\frac{\sqrt3}{2}\right),
 \qquad V(x)=(-\sin x,\cos x,0),
\]
and define the twisted spinor by
\begin{equation}\label{eq:embedded-spinor}
 \xi=3^{-1/4}e^{\ii x/2}\binom10,\qquad
 \Psi=\xi\otimes U+(\ii\xi)\otimes V.
\end{equation}
The equality $e^{\ii(x+2\pi)/2}=-e^{\ii x/2}$ verifies the
antiperiodic identification explicitly. Therefore this formula is
global in the specified spin structure.
By \Cref{thm:family},
\[
 \tnabla\Psi=0,\qquad
 \tau(\phi)=\cR(\phi,\Psi)=-\frac1{\sqrt3}U,\qquad
 |\Psi|^2=\frac2{\sqrt3}.
\]
The map is an embedding by \Cref{lem:geometry}.
\end{example}

The same underlying latitude and its product with a circle
also arise from the constructions in
\cite[Sections~2--3]{Bekesi26}.

\begin{example}[A double cover with periodic spin structure]
\label{ex:periodic}
Pull back \Cref{ex:embedded} by
\([x,y]\mapsto[2x,y]\). This gives \(m=2\), \(q=1\),
the metric \(3\,dx^2+dy^2\), and coefficient
\(\xi=3^{-1/4}e^{\ii x}\chi\). The first spin sign is now
positive, so the spin structure is periodic in both directions.
The map is a two-fold isometric covering of the embedded torus,
and \(\tau(\phi)=\cR(\phi,\Psi)=-U/\sqrt3\).
\end{example}

The circle factor contributes neither tension nor curvature
force, and the spinor has no component in that direction.
Removing it gives the following rank-one solutions.

\begin{proposition}[Coupled examples with target $S^2$]
\label{prop:sphere-target}
For any parameters in \Cref{thm:family}, retain the same source metric
and spin structure, and define
\[
 \phi_0([x,y])=\gamma(x):M_{a_0}\longrightarrow S^2(1),\qquad
 \Psi_0=\xi\otimes U+(\ii\xi)\otimes V,
 \qquad \xi=\sqrt{\kappa}\,e^{\ii qx}\chi.
\]
Then
\[
 \tnabla\Psi_0=0,\qquad
 \tau(\phi_0)=\cR(\phi_0,\Psi_0)=-\kappa U\ne0,\qquad
 \rank(d\phi_0)=1.
\]
In particular, these are coupled Dirac-harmonic maps from a closed
torus to the unit sphere.
\end{proposition}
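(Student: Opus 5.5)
The plan is to derive everything for $\phi_0$ from the computations already done for $\phi=(\gamma,e^{\ii y})$ in \Cref{lem:geometry,lem:parallel,lem:force}. Write $\phi_0=\pi_1\circ\phi$, where $\pi_1:S^2(1)\times S^1(1)\to S^2(1)$ is the projection. Equivalently, the slice $S^2(1)\times\{1\}$ is a totally geodesic isometric embedding of $S^2(1)$ into the product. Rather than appeal to \Cref{lem:totally-geodesic} in reverse, I will recompute directly in the sphere target, using only the frame $(U,V)$ of $\phi_0^*TS^2$. This frame is real, orthonormal and periodic, and it depends only on $x$.

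The steps run as follows. First, $d\phi_0(e_1)=a_0^{-1}\partial_x\gamma=V$ and $d\phi_0(e_2)=0$, since $\phi_0$ is independent of $y$. As $V\neq0$ everywhere, this gives $\rank(d\phi_0)=1$. Second, the pullback connection identities $\nabla^{\phi_0}_{e_1}U=\kappa V$, $\nabla^{\phi_0}_{e_1}V=-\kappa U$ and $\nabla^{\phi_0}_{e_2}U=\nabla^{\phi_0}_{e_2}V=0$ are exactly the sphere-factor computations in the proof of \Cref{lem:geometry}, namely the projection formula \eqref{eq:projection}. The $e_2$ derivatives vanish because nothing depends on $y$. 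The source frame is parallel, so \eqref{eq:tension} gives $\tau(\phi_0)=\nabla^{\phi_0}_{e_1}V=-\kappa U$, which is nonzero since $\kappa>0$. Third, the spinor calculation in \Cref{lem:parallel} used only these $U,V$ derivatives and \eqref{eq:spin-coordinates}. It therefore carries over verbatim to give $\tnabla\Psi_0=0$, and hence $\D^{\phi_0}\Psi_0=0$ and $|\Psi_0|^2=2\kappa$. Global well-definedness follows from the resonance condition \eqref{eq:resonance}, exactly as before.

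Fourth comes the curvature force. In \eqref{eq:curvature-term} the $\alpha=2$ term now vanishes because $d\phi_0(e_2)=0$; this replaces the earlier appeal to the flatness of the circle factor. For the $\alpha=1$ term we use $d\phi_0(e_1)=V$ and \eqref{eq:sphere-actions}, which holds intrinsically on $S^2(1)$. As in \Cref{lem:force}, this gives $e_1\cdot R(U,V)\Psi_0=-\Psi_0$. Pairing with $Y=U$ yields $-\tfrac12|\Psi_0|^2=-\kappa$, and pairing with $Y=V$ yields $0$ because $R(V,V)=0$. Since $(U,V)$ spans $T S^2$ along $\gamma$, we get $\cR(\phi_0,\Psi_0)=-\kappa U=\tau(\phi_0)$, and the pair is Dirac-harmonic and coupled.

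No step is a genuine obstacle. The only point needing care is the reduction from the earlier three-dimensional frame $(U,V,W)$ to the two-dimensional one. There are two things to check: that dropping $W$ loses no component of the force, which holds because the target is now $S^2$ itself, and that the vanishing of the $\alpha=2$ contribution is correctly attributed to $d\phi_0(e_2)=0$.
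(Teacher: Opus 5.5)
Your proposal is correct and follows essentially the same route as the paper. The paper reuses the sphere-factor connection identities and the spinor computation of \Cref{lem:parallel}, and observes that $d\phi_0(e_2)=0$ removes the $\alpha=2$ contributions to both the tension and the curvature force, so that exactly the computations of \Cref{lem:geometry,lem:force} remain. (Your opening remark that $\phi_0=\pi_1\circ\phi$ is ``equivalent'' to the slice being totally geodesic is loose, but you never use it.)
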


\begin{proof}
The sphere connection and the spinor are unchanged, so
\Cref{lem:parallel} applies. Since
\(d\phi_0(e_1)=V\) and \(d\phi_0(e_2)=0\), the tension and
the nonzero curvature contribution are those of
\Cref{lem:geometry,lem:force}. The derivative has rank one.
\end{proof}

\section{From the latitude example to warped annuli}
\label{sec:annuli}

\subsection{The mechanism of the example}
The rank-one example in \Cref{prop:sphere-target} already contains
the construction we need. Its essential observation is that a suitable
spinor phase cancels the rotation of the target frame. The amplitude
then balances the tension against the curvature force, while the spin
structure determines which phases are globally admissible.

To see the cancellation directly, write \(\varphi=mx\) for the longitude
of the latitude; \(\theta\) remains its fixed colatitude. The connection
formulas in \Cref{lem:geometry} become
\begin{equation}\label{eq:latitude-turn}
 \nabla_{\partial_\varphi}U=\cos\theta\,V,\qquad
 \nabla_{\partial_\varphi}V=-\cos\theta\,U.
\end{equation}
Use arc length \(\ell=\sin\theta\,\varphi\), and let
\(P_1,P_2\) be a parallel oriented orthonormal frame along
the curve with \(\ell\in\RR\), initially equal to \(U,V\).
Equation \eqref{eq:latitude-turn} gives
\[
 \begin{gathered}
 U(\ell)+\ii V(\ell)
 =e^{-\ii\alpha(\ell)}\bigl(P_1(\ell)+\ii P_2(\ell)\bigr),\\
 \alpha(\ell)=\ell\cot\theta,\qquad
 \alpha(2\pi\sin\theta)-\alpha(0)=2\pi\cos\theta.
 \end{gathered}
\]
Pull these fields back to the source by the local arc-length
function \(\ell=m\sin\theta\,x\). For a local parallel spinor
\(\chi\) on the flat torus and a constant \(A\), this gives
\begin{equation}\label{eq:phase-cancellation}
 \begin{aligned}
 \Psi&=Ae^{\ii\alpha(\ell)}\chi
             \otimes\bigl(U(\ell)+\ii V(\ell)\bigr)\\
     &=A\chi\otimes\bigl(P_1(\ell)+\ii P_2(\ell)\bigr).
 \end{aligned}
\end{equation}
The right-hand side is parallel, so \(\tnabla\Psi=0\) and
\(\D^\phi\Psi=0\) locally. Since
\(\alpha(m\sin\theta\,x)=m\cos\theta\,x\), this recovers the
phase \(e^{\ii qx}\) with \(q=m\cos\theta\). We use
\(\alpha\) for the accumulated phase and \(q\) for its constant
angular frequency. Global well-definedness requires
\eqref{eq:spin-x}--\eqref{eq:spin-y}.

The same observation gives a concise criterion for a general curve.
Let \(\gamma(\ell)\) be a smooth curve parametrized by arc length in
an oriented surface. Retaining the notation of the example, define
\[
 V(\ell):=\frac{d\gamma}{d\ell}(\ell)\in T_{\gamma(\ell)}N,
 \qquad |V(\ell)|_h=1,\qquad U(\ell):=-JV(\ell),
\]
where \(J\) is rotation by \(+\pi/2\). For a field \(Z\)
along the curve,
\(\nabla_VZ\) abbreviates \(\nabla^\gamma_{\partial_\ell}Z\).
The choice of \(U\) agrees with the outward radial vector in the
latitude example. The signed geodesic curvature \(\kappa\) satisfies
\[
 \nabla_VU=\kappa V,\qquad \nabla_VV=-\kappa U.
\]
In an orientation-preserving regular parameter \(s\), these relations give
\[
 V=\frac{\dot\gamma}{|\dot\gamma|_h},\qquad
 \nabla_sV=|\dot\gamma|_h\kappa JV,\qquad
 d\ell=|\dot\gamma|_h\,ds.
\]
Under an orientation-preserving change \(s=s(u)\), both the
speed and the covariant derivative of \(V\) are multiplied by
\(s'(u)>0\). Thus the signed curvature transforms as
\(\widetilde\kappa(u)=\kappa(s(u))\), and change of variables
shows that the total turning \(\int\kappa\,d\ell\) over one
traversal is unchanged.
Suppose locally that \(\phi=\gamma\circ\ell\), where \(\ell\)
denotes the arc-length parameter pulled back to the source.
For a closed curve, a local real representative of this parameter
is understood. Let \(E_1,E_2\) be a parallel orthonormal source
frame with \(d\ell(E_1)=v>0\) and \(d\ell(E_2)=0\), where
\(v\) is constant. Then \(d\phi(E_1)=vV\) and
\(d\phi(E_2)=0\). Fields and functions along the curve in the
following local formulas are evaluated at the source value of
\(\ell\).
Choose a local parallel unit spinor \(\chi_\epsilon\) with
\(E_1\cdot\chi_\epsilon=\ii\epsilon\chi_\epsilon\),
\(\epsilon=\pm1\), and put
\[
 \Psi=A(\ell)e^{\ii\alpha(\ell)}\chi_\epsilon
          \otimes\bigl(U(\ell)+\ii V(\ell)\bigr),
 \qquad A>0.
\]
Primes on \(A\) and \(\alpha\) denote differentiation with
respect to the curve parameter \(\ell\). All coefficients are independent of the \(E_2\) direction. Direct
differentiation gives
\[
 \begin{aligned}
 \tnabla_{E_1}\Psi
 &=v\bigl(A'(\ell)+\ii A(\ell)
                      (\alpha'(\ell)-\kappa(\ell))\bigr)\\
 &\qquad\cdot e^{\ii\alpha(\ell)}\chi_\epsilon
                    \otimes\bigl(U(\ell)+\ii V(\ell)\bigr).
 \end{aligned}
\]
Since \(E_1\cdot\) is invertible, the Dirac equation is therefore
equivalent to \(A'=0\) and \(\alpha'=\kappa\).
We normalize the phase by \(\alpha(0)=0\), so
\(\alpha(\ell)=\int_0^\ell\kappa(s)\,ds\). This notation for
the phase is retained in the subsequent constructions.
If \(K\) is the target Gaussian curvature along the curve, the
calculation of \Cref{lem:force} gives
\begin{equation}\label{eq:curve-balance}
 \tau(\phi)=-v^2\kappa U,\qquad
 \cR(\phi,\Psi)=-\epsilon vKA^2U.
\end{equation}
Indeed, \(R(U,V)\Psi=\ii K\Psi\) and \(|\Psi|^2=2A^2\).
Thus the remaining pointwise condition is
\begin{equation}\label{eq:curve-ratio}
 \kappa=\frac{\epsilon A^2}{v}K.
\end{equation}
Where \(K\ne0\), this asks that \(\kappa/K\) be constant along
the curve. Finally, after \(m\) traversals of a curve of length \(L\),
the multiplier
\[
 \exp\left(\ii m\int_0^L\kappa\,d\ell\right)
\]
must equal the prescribed source spin sign. Connection cancellation,
curvature balance, and this global phase condition are the three
ingredients retained below.

\subsection{The warped-annulus theorem}
The latitude construction uses only the target geometry along the
latitude and in a neighborhood of it; the rest of the sphere plays
no role. In polar coordinates that neighborhood has metric
\[
 dr^2+\sin^2r\,d\varphi^2,
\]
and the original latitude is \(r=\theta\). This suggests replacing
\(\sin r\) by a positive function while retaining the circle
coordinate \(\varphi\). More precisely, consider
\begin{equation}\label{eq:warped-annulus}
 P=I\times_F S^1,\qquad h_P=dr^2+F(r)^2d\varphi^2,\qquad
 \varphi\in\RR/2\pi\ZZ.
\end{equation}
Here \(I\subset\RR\) is the open interval of radial coordinates,
and \(F:I\to(0,\infty)\) is smooth.
Shifts of \(\varphi\) are isometries; their orbits are the circles
\(r=\text{constant}\), of length \(2\pi F(r)\).

Define the orthonormal frame on the annulus by
\(U=\partial_r\) and \(V=F^{-1}\partial_\varphi\). For a fixed
radius \(r\), its orbit has the unit-speed parametrization
\[
 \gamma_r:\RR/L_r\ZZ\longrightarrow P,\qquad
 \gamma_r(\ell)=\bigl(r,[\ell/F(r)]\bigr),\qquad L_r=2\pi F(r),
\]
where brackets reduce the angular coordinate modulo \(2\pi\).
The chain rule gives
\[
 \frac{d\gamma_r}{d\ell}(\ell)
 =F(r)^{-1}(\partial_\varphi)_{\gamma_r(\ell)}
 =V_{\gamma_r(\ell)}.
\]
Thus restricting this annulus field to an orbit gives exactly the
unit tangent used above. Orient the annulus by
\(dr\wedge d\varphi\), so that \(JU=V\) and \(U=-JV\).
Primes on \(F\) denote derivatives with respect to \(r\).
The Christoffel formula applied to \(h_{rr}=1\) and
\(h_{\varphi\varphi}=F^2\) gives
\[
 \Gamma^r_{\varphi\varphi}
   =-\frac12\partial_r(F^2)=-FF',\qquad
 \Gamma^\varphi_{r\varphi}
   =\frac{1}{2F^2}\partial_r(F^2)=\frac{F'}F.
\]
In the orthonormal frame, these identities become
\begin{equation}\label{eq:warp-connection}
 \nabla_UU=\nabla_UV=0,\qquad
 \nabla_VU=\frac{F'}F V,\qquad
 \nabla_VV=-\frac{F'}F U.
\end{equation}
In particular, the unit-speed orbit has curvature vector
\(-(F'/F)U\), and its signed geodesic curvature is
\(\kappa=h(\nabla_VV,JV)=F'/F\). Geometrically,
\[
 \frac{F'(r)}{F(r)}=\frac{d}{dr}\log L_r,
 \qquad L_r=2\pi F(r),
\]
so this curvature is the relative radial growth rate of the
orbit length. For \(F(r)=r\), it reduces to the curvature
\(1/r\) of a Euclidean circle. Since \([U,V]=-(F'/F)V\),
\[
 R(U,V)V
 =-\left(\left(\frac{F'}F\right)'
              +\left(\frac{F'}F\right)^2\right)U
 =-\frac{F''}F U.
\]
Consequently, along the orbit at radius \(r\),
\begin{equation}\label{eq:warp-data}
 \kappa=\frac{F'}F,\qquad K=-\frac{F''}F,\qquad
 \int_{\gamma_r}\kappa\,d\ell=2\pi F'(r).
\end{equation}
Both \(\kappa\) and \(K\) are constant on that orbit, so
\eqref{eq:curve-ratio} can be achieved by choosing the amplitude
and the sign \(\epsilon\). If \(F''\ne0\), the total turning
\(2\pi F'(r)\) varies with \(r\), allowing the spin condition to
be met by a nearby orbit. Thus the annulus supplies both the local
geometry used in the example and the family of closed curves needed
for global compatibility. A center is unnecessary.
By \Cref{lem:totally-geodesic}, a solution in this annulus also gives
a solution in any target containing it totally geodesically, which
accounts for the immersion hypothesis in \Cref{thm:warped-annulus}.

\begin{proof}[Proof of \Cref{thm:warped-annulus}]
\noindent\textbf{Step 1: Choose the source parameter and frame.}
By uniformization and the lattice description of conformal tori
\cite[Theorem~4.4.1 and \S2.7]{Jost06}, after an oriented
diffeomorphism we may write
\begin{equation}\label{eq:general-flat-torus}
 M=(\RR/2\pi\ZZ)^2,\qquad g=e^{2u}g_0,\qquad
 g_0=G_{11}dx^2+2G_{12}dx\,dy+G_{22}dy^2,
\end{equation}
Here \(G=(G_{ij})\) is a constant symmetric positive definite
matrix. Thus \(g_0\) is flat and \(\partial_x,\partial_y\) are
parallel. The coefficients are allowed to be arbitrary, so the
given conformal class is retained.
By \Cref{lem:conformal,lem:totally-geodesic}, it suffices to construct
solutions for \(g_0\) with target \(P\).

Recall from \Cref{subsec:flat-spinors} that the phase \(e^{\ii qx}\)
realizes the spin signs \((\sigma,\delta)\) only when
\(q\in\ZZ+\sigma\) and \(\delta=0\); see \eqref{eq:frequency}.
To accommodate all four spin structures, replace \(x\) by the
circle-valued map, with \(S^1=\RR/2\pi\ZZ\),
\begin{equation}\label{eq:circle-parameter}
 t:M\longrightarrow S^1,\qquad
 t([x,y])=[p_1x+p_2y]_{2\pi},\qquad
 p=(p_1,p_2)\in\ZZ^2\setminus\{0\}.
\end{equation}
Brackets denote reduction modulo the indicated period.
The local representative of \(t\) is \(p_1x+p_2y\), and
\(p_i\) is its degree along the \(i\)-th coordinate loop.
For the phase \(e^{\ii q(p_1x+p_2y)}\), the transition rules
\eqref{eq:spin-x}--\eqref{eq:spin-y} require
\begin{equation}\label{eq:spin-congruences}
 qp_1\equiv\sigma\pmod{\ZZ},\qquad
 qp_2\equiv\delta\pmod{\ZZ}.
\end{equation}
To satisfy these conditions for every \(q\in\ZZ+\sigma_*\),
choose
\begin{equation}\label{eq:choose-direction}
 (p,\sigma_*)=
 \begin{cases}
 ((1,0),0),&(\sigma,\delta)=(0,0),\\
 ((2\sigma,2\delta),\tfrac12),&(\sigma,\delta)\ne(0,0).
 \end{cases}
\end{equation}
Indeed, this gives \(\sigma_*p_1=\sigma\) and
\(\sigma_*p_2=\delta\).

The local representatives of \(t\) define the global one-form
\(dt=p_1dx+p_2dy\). Its norm \(\rho:=|dt|_{g_0}>0\) is constant,
since the coefficients of both \(dt\) and \(g_0\) are constant.
Let \((dt)^{\sharp_{g_0}}\) denote its metric-dual vector field,
and let \(J_0\) be rotation by \(+\pi/2\) on the oriented
source surface \((M,g_0)\). Define
\begin{equation}\label{eq:adapted-source-frame}
 E_1=\rho^{-1}(dt)^{\sharp_{g_0}},\qquad E_2=J_0E_1.
\end{equation}
Since \(dt\) and \(J_0\) are parallel, these fields form a
positively oriented parallel orthonormal frame. Moreover,
\[
 dt(E_1)=\rho^{-1}|dt|_{g_0}^2=\rho,\qquad dt(E_2)=0.
\]

\medskip
\noindent\textbf{Step 2: Choose target orbits with admissible phases.}
To determine the required frequency before choosing the radius,
fix \(r\in I\) and a positive integer \(m\). Define the closed
parametrized orbit by
\[
 \gamma_{r,m}:S^1\longrightarrow P,\qquad
 \gamma_{r,m}([\lambda])=(r,[m\lambda]).
\]
In a local angular coordinate \(\lambda\),
\(d\gamma_{r,m}/d\lambda=mF(r)V\), so arc length satisfies
\(d\ell=mF(r)\,d\lambda\). By \eqref{eq:warp-connection},
\begin{equation}\label{eq:orbit-frame-derivatives}
 \nabla^{\gamma_{r,m}}_{\partial_\lambda}U=mF'(r)V,
 \qquad
 \nabla^{\gamma_{r,m}}_{\partial_\lambda}V=-mF'(r)U.
\end{equation}
Hence
\begin{equation}\label{eq:orbit-phase-derivative}
 \nabla^{\gamma_{r,m}}_{\partial_\lambda}
       \bigl(e^{\ii q\lambda}(U+\ii V)\bigr)
 =\ii\bigl(q-mF'(r)\bigr)e^{\ii q\lambda}(U+\ii V).
\end{equation}
Thus cancellation requires \(q=mF'(r)\), and the global spin
condition becomes \(mF'(r)\in\ZZ+\sigma_*\).
Choose \(r_*\) with \(F'(r_*)F''(r_*)\ne0\): near a point
where \(F''\ne0\), the function \(F'\) is strictly monotone
and has at most one zero. By \eqref{eq:warp-data}, this choice
ensures that both curvatures are nonzero.

Reverse \(r\) if necessary and choose a small interval \(J\)
about \(r_*\) where \(F'>0\), \(F''\ne0\), and
\(\epsilon=\operatorname{sign}K\) is constant.
The function \(F'|_J\) has a smooth inverse.
Choose \(q_m\in\ZZ+\sigma_*\) nearest to \(mF'(r_*)\); then
\[
 \left|\frac{q_m}{m}-F'(r_*)\right|\le\frac1{2m}.
\]
Since \(F'(J)\) is an open interval containing \(F'(r_*)>0\),
for all sufficiently large positive integers \(m\) we have
\(q_m>0\) and \(q_m/m\in F'(J)\). Thus
\begin{equation}\label{eq:warp-resonance}
 r_m=(F'|_J)^{-1}(q_m/m)
 \quad\hbox{satisfies}\quad
 r_m\longrightarrow r_*,\qquad mF'(r_m)=q_m.
\end{equation}
In this way a nearby orbit satisfies the phase condition exactly,
while retaining the two nonvanishing curvature properties.

\medskip
\noindent\textbf{Step 3: Construct global fields and solve the Dirac equation.}
Fix a sufficiently large positive integer \(m\), and use the radius
\(r_m\) and frequency \(q_m\) chosen in Step~2. Define the map
\begin{equation}\label{eq:C-map}
 \begin{aligned}
 \phi_m&:=\gamma_{r_m,m}\circ t:(M,g_0)\longrightarrow P,\\
 \phi_m([x,y])&=\bigl(r_m,[m(p_1x+p_2y)]\bigr).
 \end{aligned}
\end{equation}
The angular component has differential \(m\,dt\), and integrality
of \(m,p_1,p_2\) makes the map globally defined.
Put \(F_m=F(r_m)\), \(K_m=-F''(r_m)/F_m\), and
\(v_m=mF_m\rho>0\). Along \(\phi_m\), define the pullback target
frame explicitly by
\[
 U_m([x,y])=(\partial_r)_{\phi_m([x,y])},\qquad
 V_m([x,y])=F_m^{-1}(\partial_\varphi)_{\phi_m([x,y])}.
\]
These are global orthonormal sections of \(\phi_m^*TP\).

In the local parallel spinor frames choose the same constant unit column
\(\chi_\epsilon\) satisfying
\(E_1\cdot\chi_\epsilon=\ii\epsilon\chi_\epsilon\), where
\(\epsilon=\operatorname{sign}K_m\) was fixed in Step~2.
Such a choice is possible by the Clifford eigenspace construction
in \Cref{subsec:flat-spinors}, since \(E_1\) is parallel.
For a positive constant \(A_m\), to be determined in Step~4, set
\begin{equation}\label{eq:C-spinor}
 \xi_m=A_m e^{\ii q_m(p_1x+p_2y)}\chi_\epsilon,\qquad
 \Psi_m=\xi_m\otimes U_m+\ii\xi_m\otimes V_m.
\end{equation}
The congruences \eqref{eq:spin-congruences}, applied to
\(q_m\in\ZZ+\sigma_*\), give the coefficient of \(\xi_m\)
the prescribed spin signs in both source directions. Thus these
local expressions define \(\xi_m\in\Gamma(\Sigma M)\) and,
since \(U_m,V_m\) are global, \(\Psi_m\in\Gamma(E_{\phi_m})\).

The radial component of \(\phi_m\) is constant, and its angular
component has differential \(m\,dt\). The chain rule gives
\begin{equation}\label{eq:C-differential}
 \begin{aligned}
 d\phi_m(E_1)
 &=m\,dt(E_1)(\partial_\varphi)_{\phi_m}
   =mF_m\rho V_m=v_mV_m,\\
 d\phi_m(E_2)&=0.
 \end{aligned}
\end{equation}
Since \(A_m\) is constant and \(\chi_\epsilon\) is parallel,
pulling back \eqref{eq:orbit-phase-derivative} by \(t\) gives
\[
 \tnabla_X\Psi_m
 =\ii\bigl(q_m-mF'(r_m)\bigr)dt(X)\Psi_m=0
\]
for every source vector field \(X\), by \eqref{eq:warp-resonance}.
Hence \(\Psi_m\) is parallel and \(\D^{\phi_m}\Psi_m=0\).

\medskip
\noindent\textbf{Step 4: Choose the amplitude and verify the map equation.}
Let \(\kappa_m:=F'(r_m)/F_m\), the signed geodesic curvature
of the orbit \(r=r_m\) oriented by increasing \(\varphi\), as in
\eqref{eq:warp-data}. It satisfies
\(\nabla_{V_m}V_m=-\kappa_mU_m\) along that orbit.
The covering multiplicity does not change this curvature;
the speed of \(\phi_m\) in the unit source direction \(E_1\)
is \(v_m\). Thus \eqref{eq:C-differential} and the curve
calculation \eqref{eq:curve-balance} give
\begin{align}
 \tau_{g_0}(\phi_m)&=-v_m^2\kappa_mU_m
                  =-v_mq_m\rho U_m,\label{eq:C-tension}\\
 \cR(\phi_m,\Psi_m)&=-\epsilon v_mK_mA_m^2U_m.
 \label{eq:C-force}
\end{align}
Since \(\epsilon K_m=|K_m|\), choose
\begin{equation}\label{eq:C-amplitude}
 A_m=\left(\frac{q_m\rho}{|K_m|}\right)^{1/2}.
\end{equation}
This makes the two expressions equal. The map has rank one
because \(v_m>0\), and its tension is nowhere zero because
\(q_m\rho>0\).

Finally, \eqref{eq:C-differential} gives
\( |d\phi_m|^2=|d\phi_m(E_1)|^2+|d\phi_m(E_2)|^2=v_m^2 \).
Hence
\begin{equation}\label{eq:C-energy}
 E(\phi_m)=\frac12m^2F_m^2\rho^2\Area(M,g_0)\longrightarrow\infty,
\end{equation}
because \(F_m\to F(r_*)>0\).
Compose with the totally geodesic isometric immersion into \(N\), and replace
\(\Psi_m\) by \(e^{-u/2}\beta\Psi_m\) to pass from \(g_0\) to \(g\).
By \Cref{lem:conformal,lem:totally-geodesic}, this preserves the
equations, nonvanishing tension, and map energy, completing the proof.
\end{proof}

\subsection{Consequences and scope}
The annulus condition immediately applies to the usual nonflat
constant-curvature targets.
\begin{proof}[Proof of \Cref{cor:spaceforms}]
Fix \(p\in N\) and choose \(\varepsilon>0\) so that
\(\exp_p:B_\varepsilon(0)\subset T_pN\to B_\varepsilon(p)\)
is a diffeomorphism. The Jacobi equation gives the polar metric
\(dr^2+S_c(r)^2g_{S^{n-1}}\) on this normal ball, where
\[
 S_c(r)=
 \begin{cases}
  \sin(\sqrt c\,r)/\sqrt c,&c>0,\\
  \sinh(\sqrt{-c}\,r)/\sqrt{-c},&c<0.
 \end{cases}
\]
Choose any two-dimensional linear subspace \(\Pi\subset T_pN\)
and define the radial disk
\[
 S_\Pi:=\exp_p\bigl(\Pi\cap B_\varepsilon(0)\bigr).
\]
In normal coordinates, \(S_\Pi\) is the intersection of
\(\Pi\) with the coordinate ball.

For \(v\in\Pi\) and \(w\in\Pi^\perp\), define
\[
 A(v+w)=v-w,\qquad
 \mathscr R=\exp_p\circ A\circ\exp_p^{-1}.
\]
The orthogonal map \(A\) preserves the round metric of
\(S^{n-1}\). Since \(\mathscr R(r,\omega)=(r,A\omega)\),
the polar metric above satisfies \(\mathscr R^*h=h\), and
\[
 \operatorname{Fix}(\mathscr R)=S_\Pi.
\]
This fixed disk is totally geodesic. Indeed, if an ambient
geodesic starts at \(x\in S_\Pi\) with initial velocity in
\(T_xS_\Pi\), its image under \(\mathscr R\) has the same
initial data. Uniqueness of geodesics shows that it stays in
\(S_\Pi\) while it remains in the normal ball. For \(n=2\),
\(A=\id\) and \(S_\Pi=B_\varepsilon(p)\).

The induced metric on \(S_\Pi\setminus\{p\}\) is
\(dr^2+S_c(r)^2d\varphi^2\). Hence an annulus
\(a<r<b<\varepsilon\), with \(a>0\), has
\(F=S_c\) and \(F''=-cF\ne0\).
Apply \Cref{thm:warped-annulus}. All images lie in the chosen
normal ball, so the resulting maps are null-homotopic; rank,
nonvanishing tension, and energy growth follow from that theorem.
\end{proof}

Another closed target is the torus with metric
\[
 h=dr^2+(2+\sin r)^2d\varphi^2,\qquad
 (r,\varphi)\in(\RR/2\pi\ZZ)^2.
\]
Its curvature is \(K(r)=\sin r/(2+\sin r)\), and any small interval
in \(0<r<\pi/2\) supplies the required annulus.
A rotationally symmetric disk with nonzero curvature at its
center also contains a suitable annulus, since \(F''=-KF\).
Product and warped-product extensions are treated in
\Cref{sec:higher}.

The locality of the construction does not make the annulus
hypothesis automatic for every target. General geodesic polar
coordinates give \(dr^2+G(r,\varphi)^2d\varphi^2\); they need not
remove the angular dependence of \(G\).
Here \(\kappa=G_r/G\) and \(K=-G_{rr}/G\), so their ratio can vary
along a coordinate circle, preventing the constant-amplitude
balance \eqref{eq:curve-ratio}. In higher dimensions,
\(S_\Pi=\exp_p(\Pi\cap B_\varepsilon(0))\) contains the
radial geodesics through \(p\), but its second fundamental form
need not vanish. For a general metric, the corresponding
reflection \(\exp_p\circ A\circ\exp_p^{-1}\) need not be an
isometry.

\section{Removing the symmetry assumption}
\label{sec:surfaces}

A general coordinate circle need not have constant
\(\kappa/K\). By \eqref{eq:curve-balance}--\eqref{eq:curve-ratio},
we instead seek a smooth unit-speed closed immersed curve
\(\gamma:\RR/L\ZZ\to N\) whose curvature and total turning
\(\Theta_\gamma=\int_0^L\kappa\,d\ell\) satisfy
\begin{equation}\label{eq:Cprime-two-conditions}
 \kappa=aK\quad(a\ne0\text{ constant}),\qquad
 e^{\ii m_0\Theta_\gamma}=-1
 \quad\text{for some }m_0\in\mathbb{N}.
\end{equation}
Even and odd multiples of \(m_0\) give periodic and antiperiodic
phases, respectively; the source circle map will impose both spin signs.

The warped annulus suggests how to prescribe this phase without
symmetry. Recall the annulus \(P=I\times_F S^1\) in
\eqref{eq:warped-annulus}, where \(I\subset\RR\) is the
interval of radial coordinates. By \eqref{eq:warp-data}, the orbit
\(\gamma_r\), traversed once in increasing \(\varphi\), has
\(\Theta_{\gamma_r}=2\pi F'(r)\). Choose \(r_0,r\in I\)
with \(r_0<r\), and set
\(\mathcal A_{r_0,r}=[r_0,r]\times S^1\subset P\). Then
\begin{equation}\label{eq:warped-curvature-area}
 \int_{\mathcal A_{r_0,r}}K\,dA_h
 =-2\pi\int_{r_0}^r F''(s)\,ds
 =\Theta_{\gamma_{r_0}}-\Theta_{\gamma_r}.
\end{equation}
Thus curvature area controls the phase increment. We extend this
relation to general curves and use it to enforce
\eqref{eq:Cprime-two-conditions}.

\subsection{Curvature area and the total turning of a closed curve}
\label{subsec:curvature-area}
We work on an oriented coordinate disk \(D\) in the target surface,
with Gaussian curvature \(K\), rotation \(J\) by \(+\pi/2\),
and oriented area form \(dA_h\). Thus
\(dA_h(X,Y)=h(JX,Y)\).
For a smooth unit-speed closed immersed curve
\(\gamma:\RR/L\ZZ\to D\), we retain the convention of
\Cref{sec:annuli}:
\begin{equation}\label{eq:Cprime-Frenet}
 V=\gamma',\qquad U=-JV,\qquad
 \nabla_VV=-\kappa U=\kappa JV,\qquad
 \nabla_VU=\kappa V.
\end{equation}
Here primes denote arc-length derivatives, and \(L\) is the
length of one parametrized period.
By the Poincar\'e lemma \cite[Theorem~17.14]{Lee13}, choose a
smooth one-form \(\eta\) on \(D\) with \(d\eta=K\,dA_h\).
For any closed parametrized curve \(\gamma:[0,P]\to D\), set
\begin{equation}\label{eq:curvature-area}
 \mathcal A_K(\gamma)
 :=\int_\gamma\eta
 =\int_0^P\eta_{\gamma(s)}\bigl(\dot\gamma(s)\bigr)\,ds.
\end{equation}
The dot denotes differentiation in \(s\), which need not be arc
length. This \emph{signed curvature area} is the line-integral
term in the magnetic action \cite[Section~1.1]{CMP04}.
On \(D\), it is independent of the choice of primitive \(\eta\).
The functional is also defined on \(H^1\) loops.

To compute the total turning of the unit-speed curve \(\gamma\),
choose a smooth oriented orthonormal frame \(P_1,P_2\) on \(D\),
with \(P_2=JP_1\). Its connection one-form
\(\omega(X)=h(\nabla_XP_1,P_2)\) satisfies
\begin{equation}\label{eq:disk-connection-form}
 \nabla_XP_1=\omega(X)P_2,\qquad
 \nabla_XP_2=-\omega(X)P_1.
\end{equation}

The connection formulas give \(R(X,Y)P_1=d\omega(X,Y)P_2\).
Since our curvature convention gives
\(R(P_1,P_2)P_1=-KP_2\) and \(dA_h(P_1,P_2)=1\),
\begin{equation}\label{eq:disk-curvature-form}
 d\omega=-K\,dA_h.
\end{equation}

Choose a smooth real-valued angle \(\vartheta:[0,L]\to\RR\)
along one parametrized period such that
\[
 V(\ell)=\cos\vartheta(\ell)P_1(\gamma(\ell))
          +\sin\vartheta(\ell)P_2(\gamma(\ell)).
\]
Its endpoint increment is
\(\vartheta(L)-\vartheta(0)=2\pi n_\gamma\), where
\(n_\gamma\in\ZZ\) is the winding number of the unit tangent
relative to the disk frame.

Differentiating the expression for \(V\) and using
\eqref{eq:disk-connection-form} gives
\[
 \nabla_VV=(\vartheta'+\omega(V))JV,
 \qquad \kappa=\vartheta'+\omega(V),
\]
where the second identity follows from \eqref{eq:Cprime-Frenet}.

Equations \(d\eta=K\,dA_h\) and
\eqref{eq:disk-curvature-form} imply \(d(\omega+\eta)=0\).
Since the disk is contractible, this closed one-form is exact.
Its integral around \(\gamma\) is therefore zero, so
\(\int_\gamma\omega=-\int_\gamma\eta=-\mathcal A_K(\gamma)\).
Integrating the preceding expression for \(\kappa\) now gives
\begin{equation}\label{eq:turning-area}
 \begin{aligned}
 \Theta_\gamma:=\int_0^L\kappa\,d\ell
 &=\vartheta(L)-\vartheta(0)+\int_\gamma\omega\\
 &=2\pi n_\gamma-\mathcal A_K(\gamma).
 \end{aligned}
\end{equation}
In particular, the phase after one traversal is
\(e^{\ii\Theta_\gamma}=e^{-\ii\mathcal A_K(\gamma)}\).
For a positive integer \(m_0\), this yields
\begin{equation}\label{eq:area-for-spin}
 \mathcal A_K(\gamma)=\frac{\pi}{m_0}
 \quad\Longrightarrow\quad
 e^{\ii m_0\Theta_\gamma}
 =e^{2\pi\ii m_0n_\gamma}e^{-\ii\pi}=-1.
\end{equation}
For a smooth variation \(\Gamma_\varepsilon\) of \(\gamma\)
with field \(W\), differentiation in coordinates and periodic
integration by parts give
\[
 \left.\frac d{d\varepsilon}
       \mathcal A_K(\Gamma_\varepsilon)\right|_0
 =\int_0^L(\partial_j\eta_i-\partial_i\eta_j)W^jV^i\,d\ell
 =\int_0^L K(\gamma)dA_h(W,V)\,d\ell.
\]
Since \(U=-JV\) implies \(dA_h(U,V)=h(JU,V)=1\),
\begin{equation}\label{eq:normal-area-variation}
 \left.\frac d{d\varepsilon}
       \mathcal A_K(\Gamma_\varepsilon)\right|_0
 =\int_0^L K(\gamma)f\,d\ell\qquad(W=fU).
\end{equation}

\subsection{Short curves with constant curvature ratio}
\begin{lemma}\label{lem:short-magnetic-curves}
Let \((N,h)\) be a closed oriented Riemannian surface with
\(\mathcal K:=\max_N|K|>0\). There exists \(b_*>0\) such that for
each \(0<b<b_*\), there is a smooth closed immersed curve
\(\gamma_b\), contained in a coordinate disk, with
\[
 \mathcal A_K(\gamma_b)=b,\qquad
 \kappa_b=a_bK\circ\gamma_b,\quad a_b\ne0,
 \qquad |K\circ\gamma_b|>\frac{\mathcal K}{2}.
\]
Its length tends to zero as \(b\downarrow0\).
\end{lemma}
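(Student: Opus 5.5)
Fix a point \(p_0\in N\) with \(|K(p_0)|=\mathcal K\), and replace \(K\) by \(-K\) and the orientation if necessary so that \(K(p_0)=\mathcal K>0\). Choose a small geodesic coordinate disk \(D\) centered at \(p_0\), with \(K>\mathcal K/2\) on \(D\). Fix a primitive \(\eta\) with \(d\eta=K\,dA_h\) on \(D\). For small \(b>0\), minimize the energy
\[
 E(c)=\tfrac12\int_0^1|\dot c|^2\,ds
\]
over \(H^1\) loops \(c:\RR/\ZZ\to D\) subject to \(\mathcal A_K(c)=b\). The class is nonempty: a small positively oriented geodesic circle about \(p_0\) of radius \(r\) has \(\mathcal A_K\approx\mathcal K\pi r^2\), and by continuity in \(r\) some radius \(r_b\sim\sqrt{b/(\pi\mathcal K)}\) gives exactly \(b\). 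Comparison with this circle bounds the infimum by \(E\le\tfrac12(2\pi r_b)^2(1+o(1))\approx 2\pi b/\mathcal K\), so every near-minimizer has length \(L\lesssim\sqrt{4\pi b/\mathcal K}\to0\).

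The key estimate is a sharp isoperimetric/Poincar\'e-type bound: for a loop of length \(L\) in \(D\), \(|\mathcal A_K(c)|\le \sup_{\text{hull}}|K|\cdot L^2/(4\pi)\,(1+o(1))\). This follows from Wirtinger's inequality applied to \(\int_c\eta\), after writing \(\eta\) in coordinates centered at a point of the curve, with \(\eta\) approximately \(\tfrac12K(x)(x^1dx^2-x^2dx^1)\) plus errors of order \(L^3\). I expect this to be the main obstacle.

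Combined with the upper bound, the estimate forces the minimizers to have
\[
 \sup_{c}|K|\ge\mathcal K(1-o(1)),
\]
and since \(L\to0\), \(K>\mathcal K/2\) along the whole curve. The small length also keeps minimizing sequences in a compact subset of \(D\), away from \(\partial D\) (apply the estimate near the boundary, where \(|K|\) may be small, or recenter). Existence then follows by the direct method, because \(\mathcal A_K\) is weakly continuous on \(H^1\) loops with bounded energy via uniform convergence.

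Next come the Euler--Lagrange equations. The constraint is regular at a minimizer: by \eqref{eq:normal-area-variation}, its differential in the normal direction \(W=fU\) is \(f\mapsto\int Kf\,d\ell\), nonzero for \(f=1\) since \(K>0\). A Lagrange multiplier therefore exists, giving weakly \(\nabla_s\dot c=\mu K\,J\dot c\,|\dot c|\), up to the sign convention. Tangential variations show \(|\dot c|\) is constant, and bootstrapping yields smoothness. In arc length this reads \(\kappa=aK\circ\gamma\). If \(a=0\), then \(\gamma\) would be a closed geodesic of length \(L\to0\), impossible below the injectivity radius. Constant speed \(L>0\) gives an immersion, and \(\mathcal A_K=b\) holds by construction.

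Finally, set \(b_*\) small enough that all the above smallness conditions hold. I would also remark that the minimizer traversed is closed and lies in the coordinate disk \(D\), as required.
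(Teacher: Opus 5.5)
\textbf{There is a genuine gap.} Your Wirtinger estimate is sound. It is essentially the paper's concentration step, phrased with length rather than energy. The problem is the point you flagged in parentheses, and that estimate does not close it.

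You minimize over loops contained in a fixed disk \(D\) about \(p_0\). This class is open, but it is not closed under the uniform limits produced by the direct method. A minimizing sequence may therefore converge to a loop touching \(\partial D\). Such a loop minimizes only subject to the side constraint of staying in \(\overline D\), so the Lagrange multiplier rule, and hence \(\kappa=aK\circ\gamma\), is not available.

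Your estimate shows only that near-minimizers sit where \(|K|\ge\mathcal K(1-o(1))\). It keeps them off \(\partial D\) only if the set \(\{|K|=\mathcal K\}\) does not reach \(\partial D\). Nothing in the hypotheses provides a disk with this property: \(|K|\) may attain its maximum along a curve through \(p_0\) that leaves every small disk, or \(|K|\) may be constant. In that case nothing in your argument rules out that the infimum over loops in \(D\) is approached only by loops drifting to \(\partial D\). Shrinking \(D\) does not help. ``Recentering'' by local isometries works only in special cases such as locally constant \(K\).

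\textbf{How the paper avoids this.} It drops the boundary altogether and minimizes over \emph{all} \(H^1\) loops in \(N\) with \(\mathscr E\le E_0\). Here \(E_0\) is chosen so that every such loop has length \(<r\), with \(4r<\inj(N,h)\). It defines \(\mathcal A_K(\gamma)\) through a primitive of \(K\,dA_h\) on \(B(\gamma(0),4r)\), and checks via radial fillings and Stokes' theorem that the value does not depend on the base point. This has three consequences:
\begin{itemize}
\item Compactness of the closed surface replaces your fixed disk in the direct method: take \(\gamma_j(0)\to p_\infty\) and work in normal coordinates at \(p_\infty\).
\item The energy bound is inactive because \(I(b)<E_0\), so the minimizer is an interior critical point.
\item Your estimate, applied by contradiction along a sequence \(b_j\downarrow0\), now carries the real content. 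It forces \(|K(p_\infty)|=\mathcal K\), hence \(|K|>\mathcal K/2\) along every minimizer for small \(b\).
\end{itemize}

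\textbf{Two smaller points.} First, the field \(W=U\) (your \(f=1\)) is not an admissible \(H^1\) variation of an \(H^1\) minimizer. Nondegeneracy of the constraint therefore needs a density argument, for instance approximating \(-K(\gamma)J\dot\gamma\) in \(L^2\) by \(H^1\) fields. Second, the multiplier equation is \(\nabla_s\dot\gamma=\lambda K(\gamma)J\dot\gamma\), without the factor \(|\dot\gamma|\). This slip is harmless once constant speed is known.
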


\begin{proof}
We minimize over \(H^1(\RR/2\pi\ZZ,N)\) loops, with energy
\begin{equation}\label{eq:loop-energy}
 \mathscr E(\gamma)=\frac12\int_0^{2\pi}|\dot\gamma(s)|_h^2\,ds.
\end{equation}
Here \(\dot\gamma=d\gamma/ds\); see \cite[Chapter~1]{Klingenberg78}
for the Sobolev loop-space framework.

Choose \(0<4r<\inj(N,h)\), so that every \(B(p_0,4r)\)
is a normal coordinate disk, and fix \(E_0>0\) with
\(\sqrt{4\pi E_0}<r\). By Cauchy--Schwarz,
\begin{equation}\label{eq:short-loop-length}
 L(\gamma)\le\sqrt{4\pi\mathscr E(\gamma)}<r
 \qquad\text{if }\mathscr E(\gamma)\le E_0.
\end{equation}
For each such loop, set \(p_\gamma:=\gamma(0)\). Its image lies
in \(B(p_\gamma,r)\). On \(D_\gamma:=B(p_\gamma,4r)\), choose
a primitive \(\eta\) of \(K\,dA_h\) and define
\(\mathcal A_K(\gamma)=\int_\gamma\eta\) as in
\eqref{eq:curvature-area}.

These local definitions are consistent when the initial point is
changed. Indeed, the radial contraction
\[
 (u,s)\longmapsto
 \exp_{p_\gamma}\bigl(u\exp_{p_\gamma}^{-1}(\gamma(s))\bigr),
 \qquad 0\le u\le1,
\]
gives a filling inside \(B(p_\gamma,r)\). If another point
\(p'=\gamma(s_0)\) is used as the center, its radial filling
lies in
\(B(p',r)\subset B(p_\gamma,2r)\subset D_\gamma\).
Both fillings can therefore be evaluated using the same primitive
on \(D_\gamma\); Stokes' theorem identifies both curvature
integrals with \(\int_\gamma\eta\).
All sufficiently small \(H^1\) variations of a fixed loop can
likewise be treated using a single primitive on \(D_\gamma\).
For \(H^1\) loops, the identities follow by smooth approximation
in the common coordinate chart.

\medskip
\noindent\textbf{Step 1: Construct competitors with small energy.}
Choose \(p\in N\) with \(|K(p)|=\mathcal K\), and oriented
normal coordinates \(z\) based on an orthonormal basis of
\(T_pN\). The standard normal-coordinate estimates are
\[
 h_{ij}(z)=\delta_{ij}+O(|z|^2),\qquad
 \sqrt{\det(h_{ij}(z))}=1+O(|z|^2).
\]
Let \(c_R^+\) be the positively oriented boundary of the small
geodesic ball \(B(p,R)\), represented by
\(z_R(\varphi)=(R\cos\varphi,R\sin\varphi)\).
Stokes' theorem, continuity of \(K\), and
\(|z_R'|_{\mathrm{eucl}}=R\) give
\begin{align*}
 \mathcal A_K(c_R^+)
 &=\int_{|z|<R}K(z)\sqrt{\det(h_{ij}(z))}\,dz^1dz^2
   =\pi K(p)R^2+o(R^2),\\
 L(c_R^+)&=\int_0^{2\pi}R(1+O(R^2))\,d\varphi
           =2\pi R+O(R^3).
\end{align*}
Orient \(c_R\) so that its curvature area is positive. Then
\[
 \mathcal A_K(c_R)=\pi\mathcal K R^2+o(R^2),\qquad
 L(c_R)=2\pi R+O(R^3).
\]
Since \(K\) has a fixed nonzero sign near \(p\),
\(\mathcal A_K(c_R)\) is continuous and
strictly increasing for small \(R>0\). Thus, for every sufficiently
small \(b>0\), there is \(R(b)\) with
\[
 \mathcal A_K(c_{R(b)})=b,\qquad
 R(b)^2=\frac{b}{\pi\mathcal K}+o(b).
\]
Parametrize these circles at constant speed on \(\RR/2\pi\ZZ\).
Then
\[
 \mathscr E(c_R)=\frac{L(c_R)^2}{4\pi}=\pi R^2+o(R^2),
 \qquad \mathscr E(c_{R(b)})=\frac b{\mathcal K}+o(b)<E_0
\]
for sufficiently small \(b\). The admissible class is therefore
nonempty, and comparison with this competitor yields
\begin{equation}\label{eq:loop-upper-bound}
 \begin{aligned}
 I(b)&:=\inf\{\mathscr E(\gamma):
       \mathscr E(\gamma)\le E_0,\ \mathcal A_K(\gamma)=b\}\\
 &\le\mathscr E(c_{R(b)})
 =\frac b{\mathcal K}+o(b)<E_0.
 \end{aligned}
\end{equation}
\medskip
\noindent\textbf{Step 2: Obtain a minimizer.}
We apply the direct method \cite[Section~1.1]{Dacorogna08}.
For sufficiently small \(b>0\), choose an admissible sequence
\((\gamma_j)\) with \(\mathscr E(\gamma_j)\to I(b)\).
By compactness of \(N\), pass to a subsequence for which
\(\gamma_j(0)\to p_\infty\). For all sufficiently large \(j\),
\(d_h(\gamma_j(0),p_\infty)<r/2\), and hence
\[
 d_h(\gamma_j(s),p_\infty)
 \le L(\gamma_j)+d_h(\gamma_j(0),p_\infty)
 <\frac{3r}{2}
 \quad\text{for all }s\in[0,2\pi].
\]
Choose oriented normal coordinates on \(B(p_\infty,4r)\), and
let \(z_j\) be the coordinate representations of these loops.
Uniform equivalence of the metric with the Euclidean metric on
\(\overline{B(p_\infty,2r)}\) gives an \(H^1\) bound on \(z_j\).
Weak compactness and the compact embedding \(H^1\hookrightarrow C^0\)
give, after passage to a subsequence,
\[
 z_j\rightharpoonup z\quad\text{in }H^1,
 \qquad z_j\longrightarrow z\quad\text{uniformly};
\]
see \cite[Theorems~3.18 and~8.8]{Brezis11}.
The limit represents an \(H^1\) loop \(\gamma\).

For a common primitive \(\eta=\eta_i(z)\,dz^i\) on
\(B(p_\infty,4r)\), we have
\begin{align*}
 \eta_i(z_j)&\longrightarrow\eta_i(z)
       &&\text{strongly in }L^2(0,2\pi),\\
 \dot z_j^i&\rightharpoonup\dot z^i
       &&\text{weakly in }L^2(0,2\pi).
\end{align*}
Strong--weak continuity of the \(L^2\) pairing
\cite[Proposition~3.5(iv)]{Brezis11} therefore gives
\[
 \mathcal A_K(\gamma_j)
 =\int_0^{2\pi}\eta_i(z_j)\dot z_j^i\,ds
 \longrightarrow\int_0^{2\pi}\eta_i(z)\dot z^i\,ds
 =\mathcal A_K(\gamma)=b.
\]

For energy, uniform convergence and the \(L^2\) derivative bound
imply
\[
 \left|\int_0^{2\pi}(h_{ik}(z_j)-h_{ik}(z))
                         \dot z_j^i\dot z_j^k\,ds\right|
 \le\|h(z_j)-h(z)\|_{L^\infty}\|\dot z_j\|_{L^2}^2\longrightarrow0.
\]
The metric \(h(z(s))\) defines an equivalent \(L^2\) Hilbert norm.
Its weak lower semicontinuity \cite[Proposition~3.5(iii)]{Brezis11}
therefore gives
\[
 \mathscr E(\gamma)
 =\frac12\int_0^{2\pi}h_{ik}(z)\dot z^i\dot z^k\,ds
 \le\liminf_j\mathscr E(\gamma_j)=I(b)<E_0.
\]
Thus \(\gamma_b:=\gamma\) is a minimizer with
\(\mathscr E(\gamma_b)=I(b)<E_0\), and is nonconstant since
\(\mathcal A_K(\gamma_b)=b>0\).

\medskip
\noindent\textbf{Step 3: Show that the minimizer stays where \(K\ne0\).}
Let \(b_j\downarrow0\), and choose corresponding minimizers
\(\gamma_j\). By \eqref{eq:loop-upper-bound} and
\eqref{eq:short-loop-length}, their lengths tend to zero.
Compactness of \(N\) allows us to pass to a subsequence with
\(\gamma_j(0)\to p_\infty\). Since
\[
 \sup_{s\in[0,2\pi]}d_h(\gamma_j(s),p_\infty)
 \le L(\gamma_j)+d_h(\gamma_j(0),p_\infty)\longrightarrow0,
\]
the curves converge uniformly to the constant curve \(p_\infty\).

Choose fixed oriented normal coordinates centered at \(p_\infty\),
and let \(z_j\) denote the coordinate representation of
\(\gamma_j\). Then \(z_j\to0\) uniformly. Define the coordinate
coefficient
\[
 \mu(z):=K(z)\sqrt{\det(h_{ab}(z))},\qquad
 K\,dA_h=\mu(z)\,dz^1\wedge dz^2.
\]
Since \(h_{ab}(0)=\delta_{ab}\), we have
\(\mu(0)=K(p_\infty)\).
To apply the mean-zero Poincar\'e inequality, set
\[
 \bar z_j=\frac1{2\pi}\int_0^{2\pi}z_j(s)\,ds,
 \qquad y_j(s)=z_j(s)-\bar z_j.
\]
Then \(\int_0^{2\pi}y_j(s)\,ds=0\) and \(\dot y_j=\dot z_j\).
Fix a coordinate rectangle \(Q\) about \(0\), containing
\(z_j(S^1)\) and \(\bar z_j\) for all sufficiently large \(j\).
On \(Q\), use the translated coordinates \(y=z-\bar z_j\)
and define
\[
 c_j(y^1,y^2)=\int_0^{y^1}
       \mu\bigl(\bar z_j+(\zeta,y^2)\bigr)\,d\zeta,
 \qquad \eta_j=c_j(y^1,y^2)\,dy^2.
\]
Then \(d\eta_j=K\,dA_h\). Since \(z_j,\bar z_j,y_j\to0\)
uniformly, continuity of \(\mu\) gives the uniform bound
\[
 |c_j(y_j(s))|
 \le\bigl(|K(p_\infty)|+o(1)\bigr)|y_j^1(s)|.
\]

The mean-zero periodic Poincar\'e inequality gives
\[
 \|y_j^1\|_{L^2(0,2\pi)}
 \le\|\dot y_j^1\|_{L^2(0,2\pi)}.
\]
Consequently,
\begin{align}
 b_j
 &=\left|\int_{\gamma_j}\eta_j\right|
  =\left|\int_0^{2\pi}c_j(y_j)\dot y_j^2\,ds\right|\notag\\
 &\le\bigl(|K(p_\infty)|+o(1)\bigr)
                  \|y_j^1\|_{L^2}\|\dot y_j^2\|_{L^2}\notag\\
 &\le\bigl(|K(p_\infty)|+o(1)\bigr)
                  \|\dot y_j^1\|_{L^2}\|\dot y_j^2\|_{L^2}\notag\\
 &\le\frac{|K(p_\infty)|+o(1)}2
       \left(\|\dot y_j^1\|_{L^2}^2+\|\dot y_j^2\|_{L^2}^2\right)
       \notag\\
 &=\bigl(|K(p_\infty)|+o(1)\bigr)\mathscr E(\gamma_j).
 \label{eq:concentration-estimate}
\end{align}
The last line uses \(\dot y_j=\dot z_j\) and
\(h_{ab}(z_j)=\delta_{ab}+o(1)\) uniformly along the curves.
Combining \eqref{eq:concentration-estimate} with
\eqref{eq:loop-upper-bound} and dividing by \(b_j>0\), we obtain
\[
 1\le\bigl(|K(p_\infty)|+o(1)\bigr)
                     \left(\frac1{\mathcal K}+o(1)\right).
\]
Passing to the limit gives \(1\le|K(p_\infty)|/\mathcal K\).
Since \(\mathcal K=\max_N|K|\), it follows that
\( |K(p_\infty)|=\mathcal K \).
Thus, for all sufficiently small \(b>0\), every minimizing
curve lies entirely where \(|K|>\mathcal K/2\).
In particular, \(K\) has a fixed nonzero sign along each such curve.

\medskip
\noindent\textbf{Step 4: Derive the equation and prove smoothness.}
Write \(\gamma=\gamma_b\), and use coordinates \(z\) on a disk
containing its image. A dot denotes differentiation with respect
to \(s\in\RR/2\pi\ZZ\).
Since \(\mathscr E(\gamma)=I(b)<E_0\) and energy is continuous
in \(H^1\), only the equality constraint \(\mathcal A_K=b\)
enters the multiplier argument.

A variation field along \(\gamma\) has coordinate expression
\[
 W(s)=w^i(s)(\partial_i)_{\gamma(s)},\qquad
 w\in H^1(\RR/2\pi\ZZ;\RR^2).
\]
Using \(z_\varepsilon=z+\varepsilon w\), the first-variation
computation in \Cref{subsec:curvature-area} gives
\begin{equation}\label{eq:area-first-variation}
 \begin{aligned}
 d\mathcal A_K|_\gamma(W)
 &=\int_0^{2\pi}K(\gamma)\,dA_h(W,\dot\gamma)\,ds\\
 &=-\int_0^{2\pi}K(\gamma)h(W,J\dot\gamma)\,ds.
 \end{aligned}
\end{equation}

To verify that this differential is nonzero, define the \(L^2\)
field \(Z=-K(\gamma)J\dot\gamma\).
Since \(\gamma\) is nonconstant and \(|K(\gamma)|>\mathcal K/2\),
\[
 \|Z\|_{L^2}^2
 =\int_0^{2\pi}K(\gamma)^2|\dot\gamma|_h^2\,ds
 \ge\frac{\mathcal K^2}4\int_0^{2\pi}|\dot\gamma|_h^2\,ds>0.
\]
By density, choose periodic \(H^1\) fields \(W_n\to Z\)
in \(L^2\). Then
\[
 d\mathcal A_K|_\gamma(W_n)
 =\int_0^{2\pi}h(Z,W_n)\,ds\longrightarrow\|Z\|_{L^2}^2>0.
\]
Thus \(d\mathcal A_K|_\gamma\ne0\), and the Lagrange multiplier
rule gives \(\lambda\in\RR\) with
\[
 d\mathscr E|_\gamma(W)
 =\lambda\,d\mathcal A_K|_\gamma(W)
 \quad\text{for every periodic }H^1\text{ field }W.
\]
In coordinates, differentiation of energy yields
\[
 d\mathscr E|_\gamma(W)
 =\int_0^{2\pi}
 \left(h_{ij}(z)\dot z^j\dot w^i
       +\frac12\partial_i h_{jk}(z)w^i\dot z^j\dot z^k\right)ds.
\]
Writing \(J(\partial_k)=J^j{}_k\partial_j\), the other side is
\[
 d\mathcal A_K|_\gamma(W)
 =-\int_0^{2\pi}K(z)h_{ij}(z)J^j{}_k(z)\dot z^k w^i\,ds.
\]
Integration by parts in the distributional sense gives
\begin{equation}\label{eq:weak-momentum}
 \frac d{ds}\bigl(h_{ij}(z)\dot z^j\bigr)
 =\frac12\partial_i h_{jk}(z)\dot z^j\dot z^k
       +\lambda K(z)h_{ij}(z)J^j{}_k(z)\dot z^k
\end{equation}
on the circle.

The right-hand side belongs to \(L^1\), so the momentum
\(h_{ij}(z)\dot z^j\) belongs to \(W^{1,1}\) and has an
absolutely continuous representative
\cite[Theorem~8.2]{Brezis11}. Inverting the continuous metric
shows that \(z\in C^1\). The right-hand side is then continuous,
giving \(z\in C^2\). Using
\[
 \frac d{ds}(h_{ij}\dot z^j)
 -\frac12\partial_i h_{jk}\dot z^j\dot z^k
 =h_{im}\bigl(\ddot z^m+\Gamma^m_{jk}\dot z^j\dot z^k\bigr),
\]
we rewrite \eqref{eq:weak-momentum} as
\begin{equation}\label{eq:magnetic-EL}
 \nabla_s\dot\gamma=\lambda K(\gamma)J\dot\gamma.
\end{equation}
Smoothness of the coefficients then gives
\(\gamma\in C^\infty(\RR/2\pi\ZZ,N)\).

Taking the inner product with \(\dot\gamma\) gives
\[
 \frac d{ds}\frac12|\dot\gamma|_h^2
 =h(\nabla_s\dot\gamma,\dot\gamma)
 =\lambda K(\gamma)h(J\dot\gamma,\dot\gamma)=0.
\]
Thus \(|\dot\gamma_b|\equiv v_0>0\). Set
\(V=v_0^{-1}\dot\gamma_b\). Then
\[
 \nabla_VV=v_0^{-2}\nabla_s\dot\gamma_b
 =\frac{\lambda}{v_0}K(\gamma_b)JV.
\]
Thus its signed geodesic curvature is
\(\kappa_b=a_bK\circ\gamma_b\), where \(a_b=\lambda/v_0\).
If \(\lambda=0\), \eqref{eq:magnetic-EL} would give a nonconstant
closed geodesic of length \(L<r<\inj(N,h)\), contradicting
injectivity of the exponential map. Hence \(a_b\ne0\).
Finally,
\[
 L^2=4\pi\mathscr E(\gamma_b)=4\pi I(b)
 \le4\pi\left(\frac b{\mathcal K}+o(b)\right)\longrightarrow0.
\]
This proves the lemma.
\end{proof}

\subsection{The spinor construction and the existence theorem}
We now return to the same source frame and the same cancellation
mechanism used in \Cref{thm:warped-annulus}. The only replacement is that the
orbit of a warped annulus is now a curve supplied by
\Cref{lem:short-magnetic-curves}. Its geodesic curvature may vary
along the curve; its ratio to \(K\) is constant.

\begin{proof}[Proof of \Cref{thm:nonflat-surface}]
First assume that \(N\) is oriented.

\medskip
\noindent\textbf{Step 1: Fix one curve and its phase.}
Choose a positive integer \(m_0\) so large that
\(b=\pi/m_0<b_*\), and take the curve of
\Cref{lem:short-magnetic-curves}. Reparametrize it by arc length:
\[
 \gamma:\RR/L\ZZ\longrightarrow N,\qquad
 \kappa=aK\circ\gamma,\quad a\ne0.
\]
Write
\begin{equation}\label{eq:Cprime-phase}
 \alpha(\ell)=\int_0^\ell\kappa(s)\,ds,
 \qquad \Theta_\gamma=\int_0^L\kappa\,d\ell,
 \qquad \alpha(\ell+L)=\alpha(\ell)+\Theta_\gamma.
\end{equation}
The functions \(\kappa\), \(V\), and \(U\) are periodic in
\(\ell\), whereas \(\alpha\) is defined on the real line.
By \eqref{eq:turning-area} and the choice of \(b\),
\begin{equation}\label{eq:Cprime-antiphase}
 e^{\ii m_0\Theta_\gamma}=e^{\ii m_0(2\pi n_\gamma-\pi/m_0)}=-1.
\end{equation}
The curve and \(m_0\) depend only on the target, and will remain
fixed as we vary the source data and the number of traversals.

\medskip
\noindent\textbf{Step 2: Specify the map and impose the spin signs.}
Use the flat representative \(g_0\) in
\eqref{eq:general-flat-torus}, and choose \((p,\sigma_*)\) by
\eqref{eq:choose-direction} for the prescribed spin structure
\((\sigma,\delta)\). Let \(t\), \(\rho\), and \(E_1,E_2\)
be as in \eqref{eq:circle-parameter} and
\eqref{eq:adapted-source-frame}.

Choose the numbers of traversals
\begin{equation}\label{eq:Cprime-coverings}
 m=
 \begin{cases}
  2m_0j,&\sigma_*=0,\quad j=1,2,\ldots,\\
  (2j+1)m_0,&\sigma_*=\tfrac12,\quad j=0,1,\ldots.
 \end{cases}
\end{equation}
Define the circle-valued map and its composition with the
closed target curve by
\begin{equation}\label{eq:Cprime-map}
 \begin{gathered}
 \ell_m:M\longrightarrow\RR/L\ZZ,\qquad
 \ell_m([x,y])=\left[\frac{mL}{2\pi}(p_1x+p_2y)\right]_L,\\
 \phi_m=\gamma\circ\ell_m:M\longrightarrow N,
 \qquad v_m=\frac{mL\rho}{2\pi}>0.
 \end{gathered}
\end{equation}
The integer \(m\) makes \(\ell_m\) well defined and records
the number of traversals of the target curve.
As in the warped-annulus construction, give the pulled-back target
frame a subscript \(m\):
\[
 U_m([x,y]):=U(\ell_m([x,y])),\qquad
 V_m([x,y]):=V(\ell_m([x,y])).
\]
These are unit fields along \(\phi_m\), with values in
\(T_{\phi_m([x,y])}N\). Also set
\(\kappa_m:=\kappa\circ\ell_m\) and \(K_m:=K\circ\phi_m\),
so \(\kappa_m=aK_m\). Unlike the orbit case, these curvature
functions may vary on the source. The chain rule gives
\begin{equation}\label{eq:Cprime-differential}
 d\phi_m(E_1)=v_mV_m,\qquad d\phi_m(E_2)=0.
\end{equation}
In the local parallel spinor frames choose the same constant unit column
\(\chi_\epsilon\) satisfying
\(E_1\cdot\chi_\epsilon=\ii\epsilon\chi_\epsilon\), where
\(\epsilon=\operatorname{sign}(a)\). Using local real angular
coordinates \(x,y\), define
\begin{equation}\label{eq:Cprime-spinor}
 \begin{gathered}
 A_m=\sqrt{v_m|a|},\qquad
 \xi_m=A_m\exp\left(\ii\alpha\left(
          \frac{mL}{2\pi}(p_1x+p_2y)\right)\right)\chi_\epsilon,\\
 \Psi_m=\xi_m\otimes(U_m+\ii V_m).
 \end{gathered}
\end{equation}
The argument of \(\alpha\) is the real representative of
\(\ell_m\) determined by these angular coordinates. The complex
target notation has the meaning fixed in \eqref{eq:complex-tensor}.

Changing the representative \(x^i\) by \(2\pi\) changes this
argument by \(mp_iL\), and hence multiplies the scalar
coefficient by \(e^{\ii mp_i\Theta_\gamma}\). By
\eqref{eq:Cprime-antiphase}--\eqref{eq:Cprime-coverings}, the
factor is \(+1\) for both directions in the trivial spin
structure. In the other three structures it is \((-1)^{p_i}\).
Thus, in every case,
\begin{equation}\label{eq:Cprime-descent}
 e^{\ii mp_1\Theta_\gamma}=e^{2\pi\ii\sigma},\qquad
 e^{\ii mp_2\Theta_\gamma}=e^{2\pi\ii\delta}.
\end{equation}
These are precisely the two transition rules from
\Cref{subsec:flat-spinors}. The local expressions therefore
define a global spinor \(\xi_m\) and, since \(U_m,V_m\) are
global, a section \(\Psi_m\in\Gamma(E_{\phi_m})\) on the
prescribed closed spin torus.

\medskip
\noindent\textbf{Step 3: Verify cancellation and curvature balance.}
The local curve calculation in \Cref{sec:annuli} applies with
speed \(v_m\), phase derivative \(\alpha'=\kappa\), and
constant amplitude \(A_m\). It gives
\(\tnabla\Psi_m=0\), hence \(\D^{\phi_m}\Psi_m=0\).
Since \(\epsilon A_m^2=v_ma\), \eqref{eq:curve-balance} gives
\[
 \cR_{g_0}(\phi_m,\Psi_m)
 =-\epsilon v_mK_mA_m^2U_m
 =-v_m^2aK_mU_m
 =-v_m^2\kappa_mU_m=\tau_{g_0}(\phi_m).
\]
The possibly variable curvatures cause no additional term:
their ratio \(a\), and therefore the amplitude, is constant.
Since \(v_m>0\), the map has rank one everywhere by
\eqref{eq:Cprime-differential}. Since \(\kappa_m\ne0\),
its tension is nowhere zero as well. Also,
\begin{equation}\label{eq:Cprime-energy}
 E(\phi_m)=\frac12v_m^2\Area(M,g_0)
 =\frac{m^2L^2\rho^2}{8\pi^2}\Area(M,g_0)
 \longrightarrow\infty.
\end{equation}
All maps take values in the same coordinate disk and are therefore
null-homotopic.

Finally, apply \Cref{lem:conformal} with the spinor
\(e^{-u/2}\beta\Psi_m\) to pass from \(g_0\) to \(g\).
This preserves the equations, nonvanishing tension, rank, and map
energy.

If \(N\) is nonorientable, let \(\pi:\widehat N\to N\)
be its orientation double cover; see
\cite[Proposition~15.40 and Theorem~15.41]{Lee13}.
The surface \(\widehat N\) is closed and oriented. With
\(\widehat h=\pi^*h\), the projection is a local isometry and
\(\widehat K=K\circ\pi\not\equiv0\).
Apply the orientable case to obtain
\((\widehat\phi_m,\widehat\Psi_m)\), and define
\[
 \phi_m=\pi\circ\widehat\phi_m,\qquad
 \Psi_m=(\id_{\Sigma M}\otimes d\pi)\widehat\Psi_m.
\]
A local isometry is a totally geodesic isometric immersion, so
\Cref{lem:totally-geodesic} preserves the equations and gives
\[
 \tau_g(\phi_m)=d\pi\bigl(\tau_g(\widehat\phi_m)\bigr).
\]
Since \(d\pi\) is an isometry, tension stays nowhere zero,
rank stays one, and energy is unchanged. Null-homotopy is
preserved by composition with \(\pi\).
\end{proof}

\section{Prescribed homotopy classes for negatively curved targets}
\label{sec:homotopy}

The curves used in \Cref{sec:surfaces} are contractible, so changing
their numbers of traversals preserves the null-homotopic map class.
For a noncontractible curve, repeated traversals generally change
that class. To prove \Cref{thm:negative-homotopy}, we instead keep
the circle map \(t\) fixed and deform the target curve within one
free homotopy class. The curvature-area constraint will prescribe
the phase after a single traversal. The local cancellation and
curvature balance remain those of \Cref{sec:annuli,sec:surfaces}.

Throughout this section, \((N,h)\) is closed and oriented, with
\(K<0\). We retain \(dA_h(X,Y)=h(JX,Y)\), \(U=-JV\), and
\(\nabla_VV=-\kappa U\). In particular,
\(k_0:=\min_N|K|>0\).

\subsection{The homotopy class and relative curvature area}
\label{subsec:relative-area}

Fix a non-null-homotopic class \(\mathfrak h\in[M,N]\), where
\([M,N]\) denotes homotopy classes without a fixed basepoint.
Use the angular coordinates of \eqref{eq:general-flat-torus},
choose a representative \(f:M\to N\), and set
\(o_M=[0,0]\) and \(o_N=f(o_M)\).
Let \(a_1,a_2\) be the classes of the coordinate loops
\(s\mapsto[s,0]\) and \(s\mapsto[0,s]\), respectively. Then
\(\pi_1(M,o_M)=\langle a_1,a_2\mid a_1a_2=a_2a_1\rangle
\cong\ZZ^2\); see \cite[Proposition~1.12, p.~34]{Hatcher02}.

The Cartan--Hadamard Theorem
\cite[Corollary~9.1.1., p.~337]{Jost26} makes \(N\) aspherical.
Thus the homomorphism \(f_*\), up to conjugacy, determines
the unbased map class
\cite[Propositions~1B.9 and~4A.2, pp.~90 and~422]{Hatcher02}.
Its image is nontrivial and abelian, hence infinite cyclic by
Preissmann's Theorem \cite[Corollary~13.7.1., p.~604]{Jost26}.
Choose a generator \(c\) of this image. There are unique integers
\(p_1,p_2\) with
\begin{equation}\label{eq:homotopy-data}
 \Gamma_f:=\operatorname{im}f_*=\langle c\rangle,\qquad
 f_*(a_i)=c^{p_i}\ (i=1,2),\qquad p=(p_1,p_2)\in\ZZ^2.
\end{equation}
Since the two images generate \(\Gamma_f\),
\(\gcd(|p_1|,|p_2|)=1\); thus \(p\) is primitive.
Replacing \(c\) by \(c^{-1}\) replaces \(p\) by \(-p\).

For \(t([x,y])=[p_1x+p_2y]_{2\pi}\), identify
\(\pi_1(S^1)\) with \(\ZZ\), so that \(t_*(a_i)=p_i\).
Let \(\mathfrak c\) be the free homotopy class represented by
\(c\). A based representative \(\gamma\) satisfies
\((\gamma\circ t)_*(a_i)=\gamma_*(p_i)=c^{p_i}=f_*(a_i)\).
Hence \(\gamma\circ t\simeq f\), and composing free homotopies
with \(t\) gives
\begin{equation}\label{eq:fixed-class-factorization}
 [\gamma\circ t]=\mathfrak h
 \quad\text{for every }\gamma:S^1\to N\text{ in }\mathfrak c.
\end{equation}
We keep \(p\) and \(t\) fixed and vary \(\gamma\) within
\(\mathfrak c\).

Every map from a closed oriented torus to \(N\) has degree zero:
this is immediate for null-homotopic maps, and otherwise
\eqref{eq:fixed-class-factorization} and \(H_2(S^1;\ZZ)=0\) give
\[
 \deg(f)[N]=f_*[M]=\gamma_*t_*[M]=0.
\]

Choose the closed geodesic \(\gamma_0:S^1\to N\) in
\(\mathfrak c\), with constant positive speed. Its existence
and uniqueness up to oriented reparametrization follow from
\cite[Theorem~9.1.1.,p.~336]{Jost26}.
The parametrization includes any multiplicity prescribed by
\(\mathfrak c\).
For a smooth loop \(\gamma\in\mathfrak c\), choose a smooth
free homotopy \(\mathcal H:[0,1]\times S^1\to N\) satisfying
\[
 \mathcal H(0,s)=\gamma_0(s),\qquad
 \mathcal H(1,s)=\gamma(s).
\]
Orient the cylinder by \(d\varepsilon\wedge ds\), so that its
oriented boundary maps to \(\gamma-\gamma_0\), and define
\begin{equation}\label{eq:relative-curvature-area}
 \begin{aligned}
 \mathcal A_{K,\gamma_0}(\gamma)
 &:=\int_{[0,1]\times S^1}\mathcal H^*(K\,dA_h)\\
 &=\int_0^1\int_0^{2\pi}
 K(\mathcal H(\varepsilon,s))\,
 dA_h(\partial_\varepsilon\mathcal H,\partial_s\mathcal H)
 \,ds\,d\varepsilon.
 \end{aligned}
\end{equation}
Given two such homotopies \(\mathcal H_1,\mathcal H_2\),
reparametrize them in \(\varepsilon\) to be independent of
\(\varepsilon\) near \(0\) and \(1\); change of variables
preserves their integrals. Gluing \(\mathcal H_1\) to
\(\mathcal H_2(1-\varepsilon,s)\) gives a smooth map
\(F:T^2\to N\). Hence
\[
 \int_{[0,1]\times S^1}\mathcal H_1^*(K\,dA_h)
 -\int_{[0,1]\times S^1}\mathcal H_2^*(K\,dA_h)
 =\int_{T^2}F^*(K\,dA_h)
 =\deg(F)\int_NK\,dA_h=0.
\]
Thus \(\mathcal A_{K,\gamma_0}\) is well defined and vanishes
at \(\gamma_0\). Orientation-preserving reparametrizations of
\(\gamma\) leave it unchanged: the corresponding homotopies
have rank at most one and contribute zero curvature area.

Consider the Sobolev loop space
\[
 \Lambda_{\mathfrak c}N
 :=\{\gamma\in H^1(S^1,N):[\gamma]=\mathfrak c\},
 \qquad S^1=\RR/2\pi\ZZ.
\]
It carries the usual Hilbert manifold structure, with tangent space
\(H^1(S^1,\gamma^*TN)\) and charts given by pointwise exponential
maps \cite[Chapter~1]{Klingenberg78}. The embedding
\(H^1\hookrightarrow C^0\) makes free homotopy classes locally
constant.

\begin{lemma}[Relative area on the Sobolev loop space]
\label{lem:relative-area}
The functional \eqref{eq:relative-curvature-area} has a unique
continuous extension to \(\Lambda_{\mathfrak c}N\). This
extension is continuously differentiable, with
\begin{equation}\label{eq:relative-area-variation}
 d\mathcal A_{K,\gamma_0}|_\gamma(\mathbf X)
 =\int_0^{2\pi}K(\gamma)\,dA_h(\mathbf X,\dot\gamma)\,ds
 \qquad
 \bigl(\mathbf X\in H^1(S^1,\gamma^*TN)\bigr).
\end{equation}
Moreover, if \(\gamma_j\to\gamma\) uniformly and
\(\gamma_j\rightharpoonup\gamma\) weakly in \(H^1\), then
\(\mathcal A_{K,\gamma_0}(\gamma_j)
\to\mathcal A_{K,\gamma_0}(\gamma)\).
Here weak convergence can be checked in target charts on
finitely many subintervals of the parameter circle.
\end{lemma}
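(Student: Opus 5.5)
The plan is to reduce everything to the local theory of \Cref{subsec:curvature-area} by comparing nearby loops through short geodesic homotopies. Fix \(\delta>0\) less than half the injectivity radius of \((N,h)\). For two continuous loops \(\gamma,\gamma'\) with \(\sup_sd_h(\gamma(s),\gamma'(s))<\delta\), define the straight-line homotopy
\[
 \mathcal G(\varepsilon,s)=\exp_{\gamma(s)}\bigl(\varepsilon\exp_{\gamma(s)}^{-1}\gamma'(s)\bigr).
\]
For smooth loops in \(\mathfrak c\), concatenating a homotopy from \(\gamma_0\) to \(\gamma\) with \(\mathcal G\), and using the well-definedness already proved, gives the cocycle identity
\[
 \mathcal A_{K,\gamma_0}(\gamma')=\mathcal A_{K,\gamma_0}(\gamma)+\int\mathcal G^*(K\,dA_h).
\]
I will therefore \emph{define} the extension to an \(H^1\) loop \(\gamma'\) by this formula, choosing any smooth \(\gamma\) uniformly \(\delta\)-close to \(\gamma'\). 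The integrand of \(\int\mathcal G^*(K\,dA_h)\) is \(K\,dA_h(\partial_\varepsilon\mathcal G,\partial_s\mathcal G)\). Here \(\partial_\varepsilon\mathcal G\) is continuous, while \(\partial_s\mathcal G\) is linear in \((\dot\gamma,\dot\gamma')\) with coefficients depending smoothly on the points, so it lies in \(L^2\). Hence the integral makes sense for \(H^1\) loops. Independence of the smooth comparison loop follows from the cocycle identity for smooth loops, since the triangle of three geodesic homotopies bounds a map of a disk-like region and Stokes applies. An alternative is to approximate by smooth loops and pass to the limit.

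Continuity, and the weak statement, will follow from a single estimate. Suppose \(\gamma_j\to\gamma\) uniformly and \(\gamma_j\rightharpoonup\gamma\) weakly in \(H^1\). Cover the circle by finitely many subintervals on which all these loops, for large \(j\), lie in a fixed target chart containing a \(2\delta\)-ball. On each chart choose a primitive \(\eta\) with \(d\eta=K\,dA_h\). Stokes on each strip \([0,1]\times I_k\) expresses \(\int\mathcal G_j^*(K\,dA_h)\) as \(\int\gamma_j^*\eta-\int\gamma^*\eta\) plus lateral terms along the geodesic segments \(\varepsilon\mapsto\mathcal G_j(\varepsilon,s_k)\) at the endpoints. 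Those lateral terms telescope globally once the chart primitives are compared: the difference of two primitives on an overlap is closed on a ball, hence exact, so it contributes boundary evaluations that cancel. The lateral terms also tend to zero, since the segments shrink uniformly. The remaining terms \(\int\eta_i(z_j)\dot z_j^i\) converge by the same strong--weak \(L^2\) pairing used in Step~2 of \Cref{lem:short-magnetic-curves}. Strong \(H^1\) convergence is a special case, which gives continuity; uniqueness of the extension is then automatic from density of smooth loops.

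For differentiability, take a chart variation \(\gamma_\varepsilon=\exp_\gamma(\varepsilon\mathbf X)\). In a local chart the difference quotient equals
\[
 \frac1\varepsilon\int_0^\varepsilon\!\int K(\gamma_u)\,dA_h(\partial_u\gamma_u,\dot\gamma_u)\,ds\,du.
\]
As \(\varepsilon\to0\) this tends to \eqref{eq:relative-area-variation}, matching the computation behind \eqref{eq:normal-area-variation}. The right side of \eqref{eq:relative-area-variation} is bounded on \(H^1\) by \(\|K\|_\infty\|\mathbf X\|_{C^0}\|\dot\gamma\|_{L^1}\), so it is a bounded linear functional. It depends continuously on \(\gamma\) in the chart, because \(\dot\gamma\) moves continuously in \(L^2\) and the coefficients move continuously in \(C^0\). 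Gâteaux differentiability with a continuous derivative then gives \(C^1\).

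I expect the main obstacle to be the bookkeeping in the continuity step. The lateral boundary terms from chart patching must cancel exactly, and the weak-convergence argument must be localized correctly, since a loop in a nontrivial class is not contained in any single chart. This is where the remark about checking weak convergence on finitely many subintervals is used.
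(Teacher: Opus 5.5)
Your proposal is correct. For differentiability it is essentially the paper's argument. The real difference is in how you build the extension and prove continuity.

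\textbf{Differentiability.} Both arguments rest on the homotopy formula for $u\mapsto\exp_\gamma(u\mathbf X)$, together with continuity of $\mathcal H$ and $\partial_\varepsilon\mathcal H$ in $C^0$ and of $\partial_s\mathcal H$ in $L^2$. The paper packages this as a composition of $C^1$ maps. You use G\^ateaux differentiability plus operator-norm continuity of the derivative. That requires the derivative at every point of the chart, i.e.\ along $u\mapsto\exp_\gamma(\mathbf Y+u\mathbf Z)$; your difference-quotient formula covers this case too.

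\textbf{Extension and continuity.} The paper uses the pointwise bounds $|\partial_\varepsilon\mathcal H|_h\le d_\infty(\gamma_1,\gamma_2)$ and $|\partial_s\mathcal H|_h\le C_h(|\dot\gamma_1|_h+|\dot\gamma_2|_h)$ on the geodesic homotopy. These give the single estimate
\[
 \bigl|\mathcal A_{K,\gamma_0}(\gamma_2)-\mathcal A_{K,\gamma_0}(\gamma_1)\bigr|
 \le C\,d_\infty(\gamma_1,\gamma_2)\bigl(L(\gamma_1)+L(\gamma_2)\bigr).
\]
The paper then defines the extension as a limit along smooth $H^1$-approximations. Continuity under uniform convergence with bounded lengths can be read off from the same estimate. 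This route needs no primitives and no chart patching. It also shows that weak $H^1$ convergence enters only through the length bound.

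You instead define the extension by one smooth comparison loop. You then prove sequential continuity with chart primitives and the strong--weak $L^2$ pairing, as in \Cref{lem:short-magnetic-curves}. This works, but it costs more bookkeeping. You also need the $H^1$ version of the triangle identity, in two places:
\begin{itemize}
\item for independence of the comparison loop;
\item to write $\mathcal A_{K,\gamma_0}(\gamma_j)-\mathcal A_{K,\gamma_0}(\gamma)=\int\mathcal G_j^*(K\,dA_h)$ when both loops are only $H^1$.
\end{itemize}
That identity is most cleanly obtained by the smooth approximation you list as an alternative, which is exactly the paper's route.

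\textbf{A small correction.} Comparing chart primitives does not make the lateral terms cancel. The overlap corrections are $f_k(\gamma_j(s_{k+1}))-f_k(\gamma(s_{k+1}))$, which tend to zero but do not cancel. The telescoping is unnecessary anyway. There are finitely many lateral segments, each of length at most $d_\infty(\gamma_j,\gamma)$, integrated against a bounded primitive, so each term tends to zero on its own.
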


\begin{proof}
For continuous loops, write
\(d_\infty(\gamma_1,\gamma_2)=\max_s d_h(\gamma_1(s),\gamma_2(s))\).
If smooth loops \(\gamma_1,\gamma_2\in\mathfrak c\) satisfy
\(d_\infty(\gamma_1,\gamma_2)<\inj(N,h)/2\), define
\[
 \mathcal H(\varepsilon,s)
 =\exp_{\gamma_1(s)}\bigl(
       \varepsilon\exp_{\gamma_1(s)}^{-1}\gamma_2(s)\bigr).
\]
Homotopy independence gives
\begin{equation}\label{eq:relative-area-local}
 \mathcal A_{K,\gamma_0}(\gamma_2)
 -\mathcal A_{K,\gamma_0}(\gamma_1)
 =\int_0^1\int_0^{2\pi}
 K(\mathcal H)dA_h(\partial_\varepsilon\mathcal H,
                         \partial_s\mathcal H)\,ds\,d\varepsilon.
\end{equation}
Smooth dependence of the short geodesic on its endpoints and
compactness of \(N\) yield
\[
 |\partial_\varepsilon\mathcal H|_h
 \le d_\infty(\gamma_1,\gamma_2),\qquad
 |\partial_s\mathcal H|_h
 \le C_h\bigl(|\dot\gamma_1|_h+|\dot\gamma_2|_h\bigr).
\]
Consequently,
\begin{equation}\label{eq:relative-area-estimate}
 \left|\mathcal A_{K,\gamma_0}(\gamma_2)
            -\mathcal A_{K,\gamma_0}(\gamma_1)\right|
 \le C\,d_\infty(\gamma_1,\gamma_2)
                 \bigl(L(\gamma_1)+L(\gamma_2)\bigr),
\end{equation}
where \(C=C_h\|K\|_{L^\infty(N)}\).

For \(\gamma\in\Lambda_{\mathfrak c}N\), choose smooth
\(\gamma_j\to\gamma\) strongly in \(H^1\)
\cite[Chapter~1]{Klingenberg78}. The convergence is uniform,
so \(\gamma_j\in\mathfrak c\) for large \(j\), and their
lengths are bounded. Estimate \eqref{eq:relative-area-estimate}
therefore defines \(\mathcal A_{K,\gamma_0}(\gamma)\) as the
limit of their area values, independently of the approximation.
On finitely many common coordinate intervals, strong \(H^1\)
convergence gives \(L^2\) convergence of speeds and hence convergence
of lengths. Thus \eqref{eq:relative-area-estimate} extends to
\(H^1\) loops. In particular,
\[
 d_\infty(\gamma_j,\gamma)\longrightarrow0,
 \quad\sup_jL(\gamma_j)<\infty
 \quad\Longrightarrow\quad
 \mathcal A_{K,\gamma_0}(\gamma_j)
       \longrightarrow\mathcal A_{K,\gamma_0}(\gamma).
\]
This proves the claimed continuity and sequential convergence;
density gives uniqueness of the extension.

To prove continuous differentiability, fix
\(\gamma\in\Lambda_{\mathfrak c}N\) and use the local chart
\[
 \mathcal F_\gamma(\mathbf X)(s)=\exp_{\gamma(s)}\mathbf X(s),
 \qquad
 \mathcal H_{\mathbf X}(\varepsilon,s)
      =\exp_{\gamma(s)}(\varepsilon\mathbf X(s)),
\]
for sufficiently small \(\mathbf X\in H^1(S^1,\gamma^*TN)\).
The local homotopy formula is
\begin{equation}\label{eq:relative-area-strip}
 \begin{aligned}
 &\mathcal A_{K,\gamma_0}(\mathcal F_\gamma(\mathbf X))
       -\mathcal A_{K,\gamma_0}(\gamma)\\
 &\qquad=\int_0^1\int_0^{2\pi}
 K(\mathcal H_{\mathbf X})
 dA_h(\partial_\varepsilon\mathcal H_{\mathbf X},
                 \partial_s\mathcal H_{\mathbf X})
       \,ds\,d\varepsilon.
 \end{aligned}
\end{equation}
For \(H^1\) data, this follows by smooth approximation in local
bundle coordinates: the homotopies and their
\(\varepsilon\)-derivatives converge uniformly, and their
\(s\)-derivatives converge in \(L^2\).
In these coordinates, smoothness of the exponential map and
\(H^1\hookrightarrow C^0\) show that
\(\mathbf X\mapsto(\mathcal H_{\mathbf X},
\partial_\varepsilon\mathcal H_{\mathbf X})\) is \(C^1\) into
\(C^0\), while \(\mathbf X\mapsto\partial_s\mathcal H_{\mathbf X}\)
is \(C^1\) into \(L^2\). Thus the integral in
\eqref{eq:relative-area-strip} is \(C^1\).
At \(\mathbf X=0\), we have
\(\partial_\varepsilon\mathcal H_0=0\) and
\[
 \left.\frac d{du}\right|_{u=0}
       \partial_\varepsilon\mathcal H_{u\mathbf Y}
 =\mathbf Y,\qquad
 \partial_s\mathcal H_0=\dot\gamma.
\]
Differentiating \eqref{eq:relative-area-strip} therefore gives
\[
 d\mathcal A_{K,\gamma_0}|_\gamma(\mathbf Y)
 =\int_0^{2\pi}K(\gamma)dA_h(\mathbf Y,\dot\gamma)\,ds,
\]
as required.
\end{proof}

\begin{lemma}[Relative turning identity]
\label{lem:relative-turning}
Let \(\gamma:\RR/2\pi\ZZ\to N\) be a smooth immersed
loop in \(\mathfrak c\), with signed geodesic curvature
\(\kappa\). In its given parameter \(s\), set
\[
 \Theta_\gamma:=\int_\gamma\kappa\,d\ell
 =\int_0^{2\pi}\kappa(s)|\dot\gamma(s)|_h\,ds.
\]
Then
\begin{equation}\label{eq:relative-turning}
 \Theta_\gamma\equiv-\mathcal A_{K,\gamma_0}(\gamma)
                  \pmod{2\pi},
 \qquad
 e^{\ii\Theta_\gamma}
 =e^{-\ii\mathcal A_{K,\gamma_0}(\gamma)}.
\end{equation}
\end{lemma}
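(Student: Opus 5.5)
The plan is to reduce the relative identity to a Gauss--Bonnet-type computation on the cylinder swept by a homotopy from \(\gamma_0\) to \(\gamma\), using a frame along the homotopy in place of the disk frame of \Cref{subsec:curvature-area}. First I check the identity for \(\gamma_0\) itself. It is a geodesic, so \(\kappa\equiv0\) and \(\Theta_{\gamma_0}=0\), while \(\mathcal A_{K,\gamma_0}(\gamma_0)=0\). Both sides are invariant under orientation-preserving reparametrization, by the remarks after \eqref{eq:relative-curvature-area} and in \Cref{sec:annuli}. So it suffices to compare \(\gamma\) with \(\gamma_0\) along a smooth free homotopy \(\mathcal H:[0,1]\times S^1\to N\).

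Choose such an \(\mathcal H\), made independent of \(\varepsilon\) near the ends, and pull back \(TN\) to the cylinder \(Z=[0,1]\times S^1\). Since \(N\) is oriented, \(\mathcal H^*TN\) is an oriented rank-two bundle over a cylinder. It is therefore trivial, and I fix a global oriented orthonormal frame \(P_1,P_2=JP_1\) of it. With the pullback connection, define the connection one-form \(\omega\) on \(Z\) by \(\nabla P_1=\omega\,P_2\), exactly as in \eqref{eq:disk-connection-form}. The computation leading to \eqref{eq:disk-curvature-form} is pointwise and uses only the pullback curvature, so it gives
\[
 d\omega=-\mathcal H^*(K\,dA_h).
\]
Stokes' theorem with the orientation \(d\varepsilon\wedge ds\), whose boundary is \(\gamma-\gamma_0\), then yields
\[
 \int_{\{1\}\times S^1}\omega-\int_{\{0\}\times S^1}\omega
 =-\mathcal A_{K,\gamma_0}(\gamma).
\]

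On each boundary circle I repeat the turning computation of \Cref{subsec:curvature-area}. Write the unit tangent as \(V=\cos\vartheta\,P_1+\sin\vartheta\,P_2\), with \(\vartheta\) a real lift along the parameter interval, whose endpoint increment is \(2\pi n\) for an integer \(n\). As before this gives \(\kappa\,d\ell=d\vartheta+\omega\) along the curve. Integrating, \(\Theta_\gamma=2\pi n_1+\int_{\{1\}\times S^1}\omega\) and \(0=\Theta_{\gamma_0}=2\pi n_0+\int_{\{0\}\times S^1}\omega\). The immersion hypothesis on \(\gamma\) and the constant positive speed of \(\gamma_0\) make \(V\) smooth on both ends. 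Subtracting and using the Stokes identity gives \(\Theta_\gamma=2\pi(n_1-n_0)-\mathcal A_{K,\gamma_0}(\gamma)\), which is \eqref{eq:relative-turning}.

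The main point needing care is the global frame over the cylinder. This is where orientability of \(N\) enters, and it replaces the contractible disk used in \eqref{eq:turning-area}. Triviality follows because an oriented plane bundle over a space homotopy equivalent to \(S^1\) has vanishing \(w_1\) and is classified by \(\pi_1(BSO(2))=0\). More concretely, one can parallel transport an oriented frame in \(\varepsilon\) from \(\{0\}\times S^1\) and then rotate by a smooth angle to close up in \(s\). No independence of the choice of \(\mathcal H\) is needed beyond the well-definedness already proved.
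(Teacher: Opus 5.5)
Your proof is correct and is essentially the paper's argument: Stokes' theorem for the connection form of an oriented frame of \(\mathcal H^*TN\) on the homotopy cylinder, combined with the boundary turning formula \(\kappa\,d\ell=d\vartheta+\omega\). The only difference is in the choice of frame. The paper parallel transports \((U_0,V_0)\) in \(\varepsilon\), so \(\omega\) vanishes on \(\{0\}\times S^1\) and the global frame comes for free. You instead allow an arbitrary global frame and cancel the bottom boundary term using \(\Theta_{\gamma_0}=0\).
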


\begin{proof}
Use a smooth homotopy \(\mathcal H\) as in
\eqref{eq:relative-curvature-area}. Along \(\gamma_0\), let
\(V_0=\dot\gamma_0/|\dot\gamma_0|\) and \(U_0=-JV_0\).
These fields form a periodic parallel oriented frame. Extend it
to an oriented orthonormal frame
\(P_1,P_2\) of \(\mathcal H^*TN\) by parallel transport along the
paths \(\varepsilon\mapsto\mathcal H(\varepsilon,s)\), taking
\((P_1,P_2)|_{\varepsilon=0}=(U_0,V_0)\).
Periodicity in \(s\) follows from that of \(\mathcal H\) and
the initial frame, by uniqueness of parallel transport.

Let \(\omega\) be its connection one-form on the cylinder:
\(\nabla P_1=\omega P_2\), \(\nabla P_2=-\omega P_1\).
The same curvature computation as
\eqref{eq:disk-curvature-form} gives
\[
 d\omega=-\mathcal H^*(K\,dA_h),\qquad
 \omega(\partial_\varepsilon)=0,\qquad
 \omega(\partial_s)|_{\varepsilon=0}=0.
\]
Stokes' theorem therefore yields
\[
 \int_{\{1\}\times S^1}\omega
 =-\mathcal A_{K,\gamma_0}(\gamma).
\]
Along the upper boundary write
\(V=\dot\gamma/|\dot\gamma|
=\cos\vartheta\,P_1+\sin\vartheta\,P_2\), with a real angle
\(\vartheta\) on \([0,2\pi]\). Since \(d\ell=|\dot\gamma|\,ds\),
the Frenet and connection formulas give
\[
 \kappa|\dot\gamma|JV=\nabla_sV
 =\left(\frac{d\vartheta}{ds}
         +\omega(\partial_s)|_{\varepsilon=1}\right)JV.
\]
Periodicity of \(V\) and the frame implies
\(\vartheta(2\pi)-\vartheta(0)=2\pi n\), \(n\in\ZZ\).
Integration yields
\(\Theta_\gamma=2\pi n-\mathcal A_{K,\gamma_0}(\gamma)\),
proving \eqref{eq:relative-turning}.
\end{proof}

\subsection{Constrained curves in a nontrivial free homotopy class}
\label{subsec:class-minimization}

We minimize the energy \(\mathscr E\) from \eqref{eq:loop-energy}
on \(\Lambda_{\mathfrak c}N\) at fixed relative curvature area.

\begin{lemma}\label{lem:class-magnetic-curves}
For every \(b\in\RR\), the infimum
\begin{equation}\label{eq:class-infimum}
 I_{\mathfrak c}(b)
 :=\inf\{\mathscr E(\gamma):\gamma\in\Lambda_{\mathfrak c}N,
                \ \mathcal A_{K,\gamma_0}(\gamma)=b\}
\end{equation}
is finite and attained. If \(b\ne0\), every minimizer
\(\gamma_b:S^1\to N\) is smooth, has constant positive
speed \(v_b\), and satisfies
\begin{equation}\label{eq:class-magnetic-curve}
 \nabla_s\dot\gamma_b=\lambda_b K(\gamma_b)J\dot\gamma_b,
 \qquad \lambda_b\ne0,
 \qquad \kappa_b=a_bK\circ\gamma_b,\quad
 a_b=\frac{\lambda_b}{v_b}\ne0.
\end{equation}
Here the dot denotes differentiation in the \(2\pi\)-periodic
parameter \(s\), and \(\kappa_b\) is signed geodesic curvature,
computed using \(d\ell=v_b\,ds\). In particular,
\(\kappa_b\) is nowhere zero. Moreover,
\begin{equation}\label{eq:class-energy-growth}
 I_{\mathfrak c}(b)\longrightarrow\infty
 \quad\text{as }|b|\longrightarrow\infty.
\end{equation}
\end{lemma}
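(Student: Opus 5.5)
Finiteness comes first: every value \(b\in\RR\) is attained in \(\Lambda_{\mathfrak c}N\). Since the class is nontrivial, \(\mathcal A_{K,\gamma_0}\) is continuous on the connected class (\Cref{lem:relative-area}) and vanishes at \(\gamma_0\). Hence it suffices to find loops in \(\mathfrak c\) with arbitrarily large positive and negative relative area. To get them, attach to \(\gamma_0\) a small contractible loop \(c_R\) around a point (as in Step 1 of \Cref{lem:short-magnetic-curves}), traversed \(k\) times with either orientation. Concatenation keeps the free class. The relative area is then \(\pm k\mathcal A_K(c_R)\): the homotopy from \(\gamma_0\) sweeps the disk \(k\) times, and \(K<0\) has fixed sign. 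By the intermediate value theorem every \(b\) is attained, so \(I_{\mathfrak c}(b)<\infty\).

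Existence follows by the direct method as in Step 2 of \Cref{lem:short-magnetic-curves}. Take a minimizing sequence. Its lengths are bounded by \(\sqrt{4\pi\mathscr E}\), so the sequence is equicontinuous, and by compactness of \(N\) and Arzel\`a--Ascoli a subsequence converges uniformly. Working in target charts on finitely many subintervals, we get weak \(H^1\) convergence. The limit lies in \(\mathfrak c\), because free homotopy classes are \(C^0\)-locally constant. The constraint passes to the limit by the sequential continuity clause of \Cref{lem:relative-area}, and energy is weakly lower semicontinuous.

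For regularity with \(b\neq0\), note that a constant loop cannot lie in the nontrivial class, so \(\dot\gamma_b\not\equiv0\). By \eqref{eq:relative-area-variation}, \(d\mathcal A_{K,\gamma_0}\) is represented by the \(L^2\) field \(-K(\gamma)J\dot\gamma\). Its norm is at least \(k_0\|\dot\gamma\|_{L^2}>0\), so the differential is nonzero. The Lagrange multiplier rule on the Hilbert manifold \(\Lambda_{\mathfrak c}N\) then applies. The bootstrap of Step 4 of \Cref{lem:short-magnetic-curves} gives smoothness and \eqref{eq:class-magnetic-curve}, constant speed \(v_b>0\), and \(\kappa_b=(\lambda_b/v_b)K\). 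If \(\lambda_b=0\), then \(\gamma_b\) is a closed geodesic in \(\mathfrak c\), hence equal to \(\gamma_0\) up to reparametrization. But reparametrizations have relative area \(0\neq b\). So \(\lambda_b\ne0\), and \(K<0\) makes \(\kappa_b\) nowhere zero.

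The main obstacle is the energy growth \eqref{eq:class-energy-growth}: we need a bound \(|\mathcal A_{K,\gamma_0}(\gamma)|\le C(1+L(\gamma)^2)\) on \(\mathfrak c\). First I would lift to the universal cover \(\widetilde N\), which is Cartan--Hadamard. The loop \(\gamma\) lifts to a path \(\tilde\gamma\) with \(\tilde\gamma(2\pi)=c\cdot\tilde\gamma(0)\), and similarly for \(\tilde\gamma_0\). Next, take the homotopy obtained by joining corresponding points of \(\tilde\gamma_0(s)\) and \(\tilde\gamma(s)\) by geodesics in \(\widetilde N\). This is equivariant, so it descends to a valid \(\mathcal H\). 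Its area integrand is bounded by \(\|K\|_\infty\,|\partial_\varepsilon\mathcal H|\,|\partial_s\mathcal H|\). Convexity of the distance in nonpositive curvature bounds \(|\partial_s\mathcal H|\le|\dot\gamma|+|\dot\gamma_0|\) pointwise, and \(|\partial_\varepsilon\mathcal H|\) equals the distance \(d(\tilde\gamma_0(s),\tilde\gamma(s))\). The lift \(\tilde\gamma(0)\) can be chosen within the diameter of \(N\) of the axis of \(c\), after translating along it by deck transformations. Then \(d(\tilde\gamma_0(s),\tilde\gamma(s))\le \operatorname{diam}N+L(\gamma)+L(\gamma_0)\). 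Hence \(|b|\le C(1+L)^2\le C'(1+\mathscr E)\), which gives \eqref{eq:class-energy-growth}. Alternatively one can invoke a linear isoperimetric inequality for \(\mathcal A\), using \(K\le-k_0<0\).
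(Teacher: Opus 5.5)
Up to the energy growth, your argument is the paper's. The direct method, the nonvanishing of $d\mathcal A_{K,\gamma_0}$, the multiplier rule with the bootstrap from \Cref{lem:short-magnetic-curves}, and the exclusion of $\lambda_b=0$ via uniqueness of $\gamma_0$ all match. Reaching every $b$ by the intermediate value theorem on the connected class is a harmless variant of the paper's explicit choice of $n_0$ and $R$ with $n_0\int_{D_R}|K|\,dA_h=|b|$.

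The gap is in the step you yourself call the main obstacle. For the geodesic homotopy to descend to $N$, you need $\tilde\gamma(2\pi)=c\,\tilde\gamma(0)$ with the same $c$ as for $\tilde\gamma_0$. Replacing $\tilde\gamma$ by $g\tilde\gamma$ changes $c$ to $gcg^{-1}$, so the only admissible lifts are $g\tilde\gamma$ with $g$ commuting with $c$. Such $g$ map the axis $A_c$ to $A_{gcg^{-1}}=A_c$ and hence preserve $d(\cdot,A_c)$. Changing the starting point of $\gamma$ only moves along the same lift. Taking instead a lift of $\gamma(0)$ near $A_c$ replaces $c$ by a conjugate and destroys equivariance.

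So the claim that $\tilde\gamma(0)$ can be taken within $\operatorname{diam}N$ of $A_c$ is false. Take the arc from $x$ to $cx$ of the equidistant curve $\{d(\cdot,A_c)=R\}$. It projects to a loop in $\mathfrak c$ whose admissible lifts lie entirely at distance exactly $R$ from $A_c$, and $R$ is arbitrary. Your bound $d(\tilde\gamma_0(s),\tilde\gamma(s))\le\operatorname{diam}N+L(\gamma)+L(\gamma_0)$ is therefore unsupported. The linear-isoperimetric alternative has the same problem, since turning the annulus into a disk needs a connecting path of controlled length.

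What your estimate actually needs is a linear bound $d(\tilde\gamma(0),A_c)\le\tfrac12L(\gamma)+C$. This holds because $K\le-k_0<0$ forces the displacement $d(x,cx)\ge L(\gamma)$ to grow like $2\,d(x,A_c)$, by CAT$(-k_0)$ comparison. This is exactly where strict negativity must enter, and your argument does not supply it. With it, your route gives the quantitative estimate $|\mathcal A_{K,\gamma_0}(\gamma)|\le C(1+\mathscr E(\gamma))$, which is more than the lemma asks.

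The paper avoids any quantitative bound. Suppose $|b_j|\to\infty$ while $I_{\mathfrak c}(b_j)$ stays bounded. Then the minimizers subconverge uniformly and weakly in $H^1$, exactly as in your existence step. The sequential continuity in \Cref{lem:relative-area} then forces $b_j$ to converge, a contradiction. This growth step uses only compactness of $N$ and the continuity of the relative area, not the sign of $K$.
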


\begin{proof}
\noindent\textbf{Step 1: Produce competitors and obtain a minimizer.}
For \(b=0\), the reference curve \(\gamma_0\) is admissible.
Suppose \(b\ne0\). Choose oriented normal coordinates \(z\)
centered at \(\gamma_0(0)\) on a disk \(D_{R_0}\), and set
\(D_R:=\{|z|<R\}\) for \(0<R<R_0\).
Let \(\beta_R\) be the radial path with
\(z(\beta_R(\varepsilon))=(\varepsilon R,0)\),
\(0\le\varepsilon\le1\). Let \(c_R\) traverse \(\partial D_R\)
once from \(\beta_R(1)\), clockwise if \(b>0\) and
counterclockwise if \(b<0\). For a positive integer \(n_0\), set
\[
 \widehat\gamma_{R,n_0}
 :=\gamma_0*\beta_R*c_R^{\,n_0}*\beta_R^{-1},
\]
where concatenation is read from left to right and
\(\beta_R^{-1}\) is the reversed path. The inserted loop is
contractible, so \(\widehat\gamma_{R,n_0}\in\mathfrak c\).

Choose \(\eta\) on \(D_{R_0}\) with \(d\eta=K\,dA_h\).
Stokes' theorem applied piecewise gives
\[
 \begin{aligned}
 \mathcal A_{K,\gamma_0}(\widehat\gamma_{R,n_0})
 &=\int_{\beta_R}\eta+n_0\int_{c_R}\eta
                       +\int_{\beta_R^{-1}}\eta\\
 &=n_0\int_{c_R}\eta
 =\operatorname{sign}(b)\,n_0\int_{D_R}|K|\,dA_h.
 \end{aligned}
\]
Since \(K<0\), the function \(R\mapsto\int_{D_R}|K|\,dA_h\)
is continuous, strictly increasing, and tends to zero as
\(R\downarrow0\). Choose \(n_0\) sufficiently large and then
\(R\) such that
\[
 \int_{D_R}|K|\,dA_h=\frac{|b|}{n_0}.
\]
Parametrize the joined curve at constant speed on
\([0,2\pi]\). It is Lipschitz, with
\[
 \widehat L=L(\gamma_0)+2L(\beta_R)+n_0L(c_R),\qquad
 \mathscr E(\widehat\gamma_{R,n_0})
 =\frac{\widehat L^2}{4\pi}<\infty.
\]
Thus the constraint admits a finite-energy competitor and
\(I_{\mathfrak c}(b)<\infty\).

Choose an admissible minimizing sequence \((\gamma_j)\) with
\(\mathscr E(\gamma_j)\le C:=I_{\mathfrak c}(b)+1\).
For \(0\le s<t\le2\pi\), Cauchy--Schwarz gives
\[
 d_h(\gamma_j(s),\gamma_j(t))
 \le\int_s^t|\dot\gamma_j(u)|_h\,du
 \le\sqrt{2C}\,|t-s|^{1/2}.
\]
Compactness of \(N\) and Arzel\`a--Ascoli yield a uniformly
convergent subsequence \(\gamma_j\to\gamma\).
Choose finitely many parameter intervals whose images under
\(\gamma\) are compactly contained in target charts. Uniform
convergence places \(\gamma_j\) in the same charts for large \(j\).
The energy bound gives \(H^1\) bounds in these coordinates;
after a further subsequence, \(\gamma\in H^1(S^1,N)\) and
\(\gamma_j\rightharpoonup\gamma\) weakly in \(H^1\).
Short pointwise geodesics between uniformly close loops show
that \(\gamma\in\Lambda_{\mathfrak c}N\).
By \Cref{lem:relative-area} and weak lower semicontinuity of energy,
\[
 \mathcal A_{K,\gamma_0}(\gamma)=b,\qquad
 \mathscr E(\gamma)\le\liminf_j\mathscr E(\gamma_j)
                    =I_{\mathfrak c}(b).
\]
Hence \(\gamma\) attains the infimum.

\medskip
\noindent\textbf{Step 2: Derive the equation and exclude a zero multiplier.}
Let \(\gamma=\gamma_b\) be a minimizer with \(b\ne0\).
By \eqref{eq:relative-area-variation},
\[
 d\mathcal A_{K,\gamma_0}|_\gamma(W)
 =\int_0^{2\pi}h(Z,W)\,ds,\qquad
 Z=-K(\gamma)J\dot\gamma.
\]
Since \(\mathfrak c\) is nontrivial and \(|K|\ge k_0>0\),
\[
 \|Z\|_{L^2}^2
 \ge k_0^2\int_0^{2\pi}|\dot\gamma|_h^2\,ds>0.
\]
Density of \(H^1\) sections in \(L^2\) therefore gives
\(d\mathcal A_{K,\gamma_0}|_\gamma\ne0\).
Since \(\Lambda_{\mathfrak c}N\) is open in \(H^1(S^1,N)\),
the Lagrange multiplier rule yields \(\lambda_b\in\RR\) with
\[
 d\mathscr E|_\gamma(W)
 =\lambda_b\,d\mathcal A_{K,\gamma_0}|_\gamma(W)
 \quad\text{for every periodic }H^1\text{ field }W.
\]
Testing on each coordinate interval gives
\eqref{eq:weak-momentum}, with \(\lambda=\lambda_b\).
The regularity and constant-speed arguments of Step~4 of
\Cref{lem:short-magnetic-curves} give smoothness, constant positive
speed, and the differential equation and curvature identity in
\eqref{eq:class-magnetic-curve}.
If \(\lambda_b=0\), uniqueness of the closed geodesic in
\(\mathfrak c\) would make \(\gamma_b\) an oriented
reparametrization of \(\gamma_0\), giving
\[
 b=\mathcal A_{K,\gamma_0}(\gamma_b)
   =\mathcal A_{K,\gamma_0}(\gamma_0)=0,
\]
a contradiction. Hence
\[
 a_b:=\frac{\lambda_b}{v_b}\ne0,\qquad
 |\kappa_b|=|a_b|\,|K\circ\gamma_b|
 \ge |a_b|k_0>0.
\]

\medskip
\noindent\textbf{Step 3: Prove the energy growth.}
If \(|b_j|\to\infty\) and the minimizers \(\gamma_{b_j}\) had
uniformly bounded energy, Step~1 would give a subsequence converging
uniformly and weakly in
\(H^1\) to \(\gamma\in\Lambda_{\mathfrak c}N\).
By \Cref{lem:relative-area},
\[
 b_j=\mathcal A_{K,\gamma_0}(\gamma_{b_j})
 \longrightarrow\mathcal A_{K,\gamma_0}(\gamma)\in\RR,
\]
a contradiction. For \(b\ne0\), constant speed also gives
\begin{equation}\label{eq:class-length-energy}
 L_b=2\pi v_b,\qquad
 I_{\mathfrak c}(b)=\pi v_b^2=\frac{L_b^2}{4\pi}.
\end{equation}
\end{proof}

\subsection{Spin signs and the construction in the prescribed class}
\label{subsec:class-spinors}

The prescribed map class fixes the primitive direction
\(p\) in \eqref{eq:homotopy-data}. For one traversal of the
target curve, the phase increments along the two source
generators are \(p_1\Theta_\gamma\) and \(p_2\Theta_\gamma\).
The spin transition rules \eqref{eq:spin-x} and
\eqref{eq:spin-y} therefore require
\begin{equation}\label{eq:class-phase-signs}
 e^{\ii p_1\Theta_\gamma}=e^{2\pi\ii\sigma},\qquad
 e^{\ii p_2\Theta_\gamma}=e^{2\pi\ii\delta}.
\end{equation}
Consequently,
\[
 e^{2\pi\ii(\sigma p_2-\delta p_1)}
 =\frac{(e^{\ii p_1\Theta_\gamma})^{p_2}}
        {(e^{\ii p_2\Theta_\gamma})^{p_1}}=1,
\]
so the necessary compatibility condition is
\begin{equation}\label{eq:class-spin-compatibility}
 \sigma p_2-\delta p_1\in\ZZ.
\end{equation}
Conversely, \(\gcd(|p_1|,|p_2|)=1\) gives
\(p\not\equiv(0,0)\pmod2\), and
\eqref{eq:class-spin-compatibility} is equivalent to
\((2\sigma,2\delta)\equiv(0,0)\) or \(p\pmod2\).
Thus there is a unique \(\sigma_*\in\{0,\tfrac12\}\) such that
\begin{equation}\label{eq:class-spin-direction}
 (\sigma,\delta)\equiv\sigma_*p\pmod{\ZZ^2}.
\end{equation}
It therefore suffices to construct a curve satisfying
\(e^{\ii\Theta_\gamma}=e^{2\pi\ii\sigma_*}\).

\begin{proposition}[Compatible spin structures]
\label{prop:compatible-homotopy}
Let \((N,h)\) be a closed oriented surface with \(K<0\),
and let \((M,g)\) be a closed oriented torus with any fixed
spin structure. For a non-null-homotopic class
\(\mathfrak h\in[M,N]\), choose its primitive direction
\(p\) as in \eqref{eq:homotopy-data}. If
\eqref{eq:class-spin-compatibility} holds, then \(\mathfrak h\)
contains a sequence of coupled Dirac-harmonic maps satisfying
\eqref{eq:main-conclusions}, with parallel twisted spinors
for a flat conformal representative.
\end{proposition}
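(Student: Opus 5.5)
We use the one-traversal construction of \Cref{thm:nonflat-surface}, but now with the circle map \(t\) of \eqref{eq:fixed-class-factorization} held fixed and the target curve varied inside \(\mathfrak c\) by \Cref{lem:class-magnetic-curves}. By \eqref{eq:class-spin-direction} it suffices to produce curves \(\gamma\in\mathfrak c\) with \(e^{\ii\Theta_\gamma}=e^{2\pi\ii\sigma_*}\). By \Cref{lem:relative-turning} this holds whenever \(\mathcal A_{K,\gamma_0}(\gamma)\in 2\pi(\ZZ-\sigma_*)\). So set \(b_j:=2\pi(j+\sigma_*)\) for \(j\in\NN\); these are nonzero and \(|b_j|\to\infty\). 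Let \(\gamma_j:=\gamma_{b_j}\) be a minimizer from \Cref{lem:class-magnetic-curves}. It is smooth, has constant speed \(v_j>0\), and satisfies \(\kappa_j=a_jK\circ\gamma_j\) with \(a_j\ne0\). Moreover \(I_{\mathfrak c}(b_j)\to\infty\), so by \eqref{eq:class-length-energy} the lengths \(L_j\to\infty\).

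Next we build the solution on the flat representative \(g_0\) of \eqref{eq:general-flat-torus}, using the same angular coordinates in which \(p\) was defined. Reparametrize \(\gamma_j\) by arc length on \(\RR/L_j\ZZ\) and set \(\ell_j([x,y])=[\tfrac{L_j}{2\pi}(p_1x+p_2y)]_{L_j}\), so that \(\phi_j=\gamma_j\circ\ell_j\). This is an orientation-preserving reparametrization of \(\gamma_j\circ t\), hence \([\phi_j]=\mathfrak h\) by \eqref{eq:fixed-class-factorization}. Take \(\rho=|dt|_{g_0}\), the frame \(E_1,E_2\) of \eqref{eq:adapted-source-frame}, \(v=L_j\rho/(2\pi)\), \(\epsilon=\operatorname{sign}a_j\), \(A=\sqrt{v|a_j|}\), and the spinor \eqref{eq:Cprime-spinor} with \(m=1\) and phase \(\alpha_j(\ell)=\int_0^\ell\kappa_j\).

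Changing \(x^i\) by \(2\pi\) multiplies the coefficient by \(e^{\ii p_i\Theta_{\gamma_j}}=e^{2\pi\ii\sigma_*p_i}\). By \eqref{eq:class-spin-direction} this equals the prescribed sign \(e^{2\pi\ii\sigma}\) or \(e^{2\pi\ii\delta}\), so the spinor is a global section. The local computation of \Cref{sec:annuli} then gives \(\tnabla\Psi_j=0\), and \eqref{eq:curve-balance} gives \(\tau=\cR=-v^2\kappa_jU\), exactly as in Step~3 of the proof of \Cref{thm:nonflat-surface}.

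The conclusions follow directly. The rank is one since \(v>0\). The tension is nowhere zero because \(|\kappa_j|\ge|a_j|k_0>0\). The energy is \(E=\tfrac12v^2\Area(M,g_0)\propto L_j^2\to\infty\). \Cref{lem:conformal} transfers everything to \(g\), which gives the parallel-spinor statement for the flat representative.

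The main point to check carefully is the spin bookkeeping. We must confirm that the sign \(\Theta_\gamma\) defined via arc length on the curve matches the orientation of \(\gamma\) induced through \(t\) (the choice \(c\) versus \(c^{-1}\) flips \(p\) and reparametrization orientation together, so the products \(p_i\Theta\) are unchanged). We must also confirm that the relative-area identity, proved for the \(2\pi\)-parametrized loop, applies after passing to arc length. \Cref{lem:relative-turning} and the reparametrization invariance of \(\mathcal A_{K,\gamma_0}\) handle the latter.
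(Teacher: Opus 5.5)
Your proposal is correct and is essentially the paper's own proof. It uses the same areas \(b_j=2\pi(j+\sigma_*)\) and the constrained minimizers of \Cref{lem:class-magnetic-curves}, traversed once through the fixed circle map \(t\); in fact \(\gamma_j\circ\ell_j=\gamma_{b_j}\circ t\) exactly, so \([\phi_j]=\mathfrak h\). The spin signs, energy growth and conformal transfer also match the paper: \Cref{lem:relative-turning} with \eqref{eq:class-spin-direction}, \(I_{\mathfrak c}(b_j)\to\infty\), and \Cref{lem:conformal}.
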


\begin{proof}[Proof of \Cref{prop:compatible-homotopy,thm:negative-homotopy}]
The null-homotopic class is covered by \Cref{thm:nonflat-surface}.
Fix a non-null-homotopic class with data \(\mathfrak c,p\)
from \eqref{eq:homotopy-data}, and let
\(\sigma_*\) satisfy \eqref{eq:class-spin-direction}.
Work first with \(g_0\) in \eqref{eq:general-flat-torus}.

\medskip
\noindent\textbf{Step 1: Choose the curves and determine the phase.}
Choose
\begin{equation}\label{eq:class-area-sequence}
 b_j=2\pi(j+\sigma_*),\qquad j=1,2,\ldots.
\end{equation}
Let \(\gamma_{b_j}\) be the \(2\pi\)-periodic minimizers from
\Cref{lem:class-magnetic-curves}, with speeds \(v_{b_j}>0\)
and multipliers \(\lambda_{b_j}\ne0\). Define their arc-length
parametrizations by
\begin{equation}\label{eq:class-reparametrization}
 \begin{gathered}
 L_j=2\pi v_{b_j},\qquad \ell=v_{b_j}s,\\
 \gamma_j:\RR/L_j\ZZ\longrightarrow N,\qquad
 \gamma_j(\ell)=\gamma_{b_j}\left(\frac{\ell}{v_{b_j}}\right).
 \end{gathered}
\end{equation}
Set \(V_j=\gamma_j'\) and \(U_j=-JV_j\), with primes denoting
\(\ell\)-derivatives. The parameter relations in
\Cref{sec:annuli} and \Cref{lem:class-magnetic-curves} give
\begin{equation}\label{eq:class-curvature-conversion}
 a_j:=\frac{\lambda_{b_j}}{v_{b_j}}\ne0,\qquad
 \kappa_j(\ell)=a_jK(\gamma_j(\ell)),\qquad
 \kappa_j(v_{b_j}s)=\kappa_{b_j}(s).
\end{equation}
The total turning is unchanged:
\begin{equation}\label{eq:class-turning-conversion}
 \Theta_{\gamma_j}
 =\int_0^{L_j}\kappa_j(\ell)\,d\ell
 =\int_0^{2\pi}\kappa_{b_j}(s)v_{b_j}\,ds
 =\Theta_{\gamma_{b_j}}.
\end{equation}
Since \(\mathcal A_{K,\gamma_0}(\gamma_{b_j})=b_j\),
\Cref{lem:relative-turning} yields
\begin{equation}\label{eq:class-selected-phase}
 e^{\ii\Theta_{\gamma_j}}=e^{-\ii b_j}
 =e^{2\pi\ii\sigma_*},
\end{equation}
where \(\sigma_*\in\{0,\tfrac12\}\).

\medskip
\noindent\textbf{Step 2: Construct the map and close the spinor.}
For each \(j\), use the local formulas in Step~2 of the proof of
\Cref{thm:nonflat-surface} with \(\gamma=\gamma_j\),
\(a=a_j\), and \(m=1\), retaining the prescribed direction \(p\).
With \(t,\rho,E_1,E_2\) as in \eqref{eq:circle-parameter}
and \eqref{eq:adapted-source-frame}, set
\begin{equation}\label{eq:class-map}
 \begin{gathered}
 \ell_j([x,y])=[v_{b_j}(p_1x+p_2y)]_{L_j},\qquad
 v_j=\rho v_{b_j}=\frac{L_j\rho}{2\pi}>0,\\
 \phi_j:=\gamma_j\circ\ell_j=\gamma_{b_j}\circ t.
 \end{gathered}
\end{equation}
Since \(\gamma_{b_j}\in\Lambda_{\mathfrak c}N\),
\eqref{eq:fixed-class-factorization} gives
\([\phi_j]=\mathfrak h\) for every \(j\).
Set \(V_j^\phi=V_j\circ\ell_j\) and
\(U_j^\phi=U_j\circ\ell_j\). Then
\begin{equation}\label{eq:class-map-differential}
 d\phi_j(E_1)=v_jV_j^\phi,\qquad d\phi_j(E_2)=0.
\end{equation}
Regard \(\kappa_j\) as an \(L_j\)-periodic function on \(\RR\),
and set
\[
 \alpha_j(\ell)=\int_0^\ell\kappa_j(\zeta)\,d\zeta,
 \qquad
 \alpha_j(\ell+L_j)=\alpha_j(\ell)+\Theta_{\gamma_j}.
\]
Choose
\[
 A_j=\sqrt{v_j|a_j|}=\sqrt{\rho|\lambda_{b_j}|},\qquad
 \epsilon_j=\operatorname{sign}(a_j),
\]
and a local parallel unit spinor \(\chi_{\epsilon_j}\) with
\(E_1\cdot\chi_{\epsilon_j}=\ii\epsilon_j\chi_{\epsilon_j}\).
In local real angular coordinates, define
\begin{equation}\label{eq:class-spinor}
 \xi_j=A_j e^{\ii\alpha_j(v_{b_j}(p_1x+p_2y))}
           \chi_{\epsilon_j},\qquad
 \Psi_j=\xi_j\otimes(U_j^\phi+\ii V_j^\phi).
\end{equation}
The argument of \(\alpha_j\) increases by \(p_1L_j\) under
\(x\mapsto x+2\pi\), and by \(p_2L_j\) under
\(y\mapsto y+2\pi\). Equations \eqref{eq:class-spin-direction}
and \eqref{eq:class-selected-phase} therefore give the spin signs
\eqref{eq:class-phase-signs}. Since \(U_j,V_j\) are periodic,
\(\Psi_j\) is a global smooth section of \(E_{\phi_j}\).

\medskip
\noindent\textbf{Step 3: Verify the equations and compute the energy.}
For each \(j\), the amplitude is constant and satisfies
\(\epsilon_jA_j^2=v_ja_j\), while
\(d\alpha_j/d\ell=\kappa_j=a_jK\circ\gamma_j\).
Thus the cancellation and curvature balance in Step~3 of the
proof of \Cref{thm:nonflat-surface} give
\begin{equation}\label{eq:class-equations}
 \begin{gathered}
 \tnabla\Psi_j=0,\qquad \D^{\phi_j}\Psi_j=0,\\
 \cR_{g_0}(\phi_j,\Psi_j)
 =-v_j^2a_j(K\circ\phi_j)U_j^\phi=\tau_{g_0}(\phi_j).
 \end{gathered}
\end{equation}
Equation \eqref{eq:class-map-differential} gives
\(\rank(d\phi_j)=1\), and
\(|\tau_{g_0}(\phi_j)|\ge v_j^2|a_j|k_0>0\).
Using \eqref{eq:class-length-energy} and
\(|d\phi_j|_{g_0,h}^2=v_j^2=\rho^2v_{b_j}^2\), we obtain
\begin{equation}\label{eq:prescribed-map-energy}
 \begin{aligned}
 E_{g_0}(\phi_j)
 &=\frac{\rho^2v_{b_j}^2}{2}\Area(M,g_0)
  =\frac{L_j^2\rho^2}{8\pi^2}\Area(M,g_0)\\
 &=\frac{\rho^2\Area(M,g_0)}{2\pi}I_{\mathfrak c}(b_j)
 \longrightarrow\infty
 \end{aligned}
\end{equation}
by \Cref{lem:class-magnetic-curves} and \(b_j\to\infty\).

For \(g=e^{2u}g_0\), apply \Cref{lem:conformal} with
\(\Psi_j^g=e^{-u/2}\beta\Psi_j\), proving
\Cref{prop:compatible-homotopy}. For the trivial spin structure,
\eqref{eq:class-spin-compatibility} holds for every primitive
\(p\), which proves \Cref{thm:negative-homotopy}.
\end{proof}

Condition \eqref{eq:class-spin-compatibility} characterizes the
spin structures admitted by this construction with parallel
twisted spinors for \(g_0\).

\section{Higher-dimensional targets with a fixed metric}
\label{sec:higher}

An independent harmonic circle component raises the rank of the
solutions from \Cref{sec:surfaces} while preserving their coupling.

\begin{lemma}[Adding a harmonic circle component]
\label{lem:harmonic-circle-product}
Let \((\phi,\Psi):(M,g)\to(N,h)\) be a Dirac-harmonic map
from a Riemannian spin surface, and let
\(\zeta:(M,g)\to S^1(1)\) be harmonic. For the product metric
on \(N\times S^1(1)\), set
\[
 \Phi=(\phi,\zeta),\qquad \widehat\Psi=(\Psi,0),
\]
using the splitting
\(\Phi^*T(N\times S^1)=\phi^*TN\oplus\zeta^*TS^1\)
and the induced splitting of the twisted spinor bundle.
Then \((\Phi,\widehat\Psi)\) is Dirac-harmonic, with
\[
 \tau_g(\Phi)=(\tau_g(\phi),0),\qquad
 E_g(\Phi)=E_g(\phi)+E_g(\zeta).
\]
In particular, coupling and nowhere-vanishing tension are
preserved. If \(\Psi\) is parallel, then so is
\(\widehat\Psi\).
\end{lemma}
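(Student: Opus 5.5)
The plan is to verify the two Euler--Lagrange equations for \((\Phi,\widehat\Psi)\) separately, using that the product Levi--Civita connection and curvature split. On \(N\times S^1(1)\) with the product metric, \(\nabla^{N\times S^1}\) is the direct sum of \(\nabla^N\) and \(\nabla^{S^1}\), with no mixed Christoffel terms. Hence the pullback connection \(\nabla^\Phi\) on \(\Phi^*T(N\times S^1)=\phi^*TN\oplus\zeta^*TS^1\) is \(\nabla^\phi\oplus\nabla^\zeta\). Tensoring with \(\Sigma M\) gives \(E_\Phi=E_\phi\oplus E_\zeta\) with \(\tnabla^\Phi=\tnabla^\phi\oplus\tnabla^\zeta\), compatibly with the Hermitian metrics \eqref{eq:twisted-metric}. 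Since Clifford multiplication acts on the first factor, \eqref{eq:dirac} gives
\[
\D^\Phi\widehat\Psi=(\D^\phi\Psi,\D^\zeta 0)=(0,0).
\]
The same splitting gives \(\tnabla\widehat\Psi=(\tnabla\Psi,0)\), so parallelness is inherited.

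For the map equation, \(d\Phi=(d\phi,d\zeta)\). The split connection in \eqref{eq:tension} gives \(\tau_g(\Phi)=(\tau_g(\phi),\tau_g(\zeta))\), and this equals \((\tau_g(\phi),0)\) because \(\zeta\) is harmonic. The energy identity is immediate, since \(|d\Phi|^2=|d\phi|^2+|d\zeta|^2\) pointwise in the product metric.

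The main point is the curvature term \(\cR(\Phi,\widehat\Psi)\). I would evaluate \eqref{eq:curvature-term} against \(Y\) tangent to \(N\) and against \(Y\) tangent to \(S^1\). The product curvature satisfies \(R^{N\times S^1}((Y_1,Y_2),(A_1,A_2))=R^N(Y_1,A_1)\oplus R^{S^1}(Y_2,A_2)\), and \(R^{S^1}=0\). Therefore
\[
R(Y,d\Phi(e_\alpha))\widehat\Psi=\bigl(R^N(Y_1,d\phi(e_\alpha))\Psi,\,0\bigr).
\]
Pairing with \(\widehat\Psi=(\Psi,0)\) gives \(h(\cR(\Phi,\widehat\Psi),Y)=h(\cR(\phi,\Psi),Y_1)\). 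Hence \(\cR(\Phi,\widehat\Psi)=(\cR(\phi,\Psi),0)\), which equals \(\tau_g(\Phi)\).

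Coupling and nowhere-vanishing tension then follow, because \(|\tau_g(\Phi)|=|\tau_g(\phi)|\) pointwise. The only step needing care, and the main obstacle, is the identification of the twisted spinor bundle and its connection with the direct sum. This identification is routine once the product connection is known to be block diagonal.
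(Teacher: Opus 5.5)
Your proposal is correct and follows essentially the same route as the paper. Both use the block-diagonal product connection and curvature, with \(R^{S^1}=0\), to get \(\tnabla^\Phi\widehat\Psi=(\tnabla^\phi\Psi,0)\), \(\tau_g(\Phi)=(\tau_g(\phi),0)\) and \(\cR(\Phi,\widehat\Psi)=(\cR(\phi,\Psi),0)\), together with the pointwise identity \(|d\Phi|^2=|d\phi|^2+|d\zeta|^2\); you simply write out the evaluation of the curvature term that the paper states in one line.
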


\begin{proof}
The product connection preserves the two summands and has no
mixed curvature terms. Since \(\widehat\Psi=(\Psi,0)\) and
\(\zeta\) is harmonic,
\[
 \begin{aligned}
 \widetilde\nabla^{\Phi}_X\widehat\Psi
   &=(\widetilde\nabla^{\phi}_X\Psi,0),\\
 \tau_g(\Phi)&=(\tau_g(\phi),0),\\
 \cR(\Phi,\widehat\Psi)&=(\cR(\phi,\Psi),0).
 \end{aligned}
\]
These identities imply the equations and the parallelness assertion.
The energy identity follows from
\(|d\Phi|^2=|d\phi|^2+|d\zeta|^2\).
\end{proof}

\begin{proof}[Proof of \Cref{cor:rank-two-product}]
Use the solutions \((\phi_m,\Psi_m)\) from
\Cref{thm:nonflat-surface}, retaining their source data
\(g=e^{2u}g_0\), \(p\), and \(t\).
Equation \eqref{eq:Cprime-map} gives
\[
 d\phi_m(X)=\frac{mL}{2\pi}\,dt(X)V_m,\qquad
 \ker d\phi_m=\ker dt.
\]
Define
\begin{equation}\label{eq:independent-circle-map}
 \zeta([x,y])=\exp\bigl(\ii(-p_2x+p_1y)\bigr)\in S^1(1).
\end{equation}
Integrality of \(p\) makes \(\zeta\) global. Its local angular
lifts are affine, so it is \(g_0\)-harmonic and hence
\(g\)-harmonic by conformal covariance. Moreover,
\[
 d\zeta=\ii\zeta(-p_2\,dx+p_1\,dy),\qquad
 dt\wedge(-p_2\,dx+p_1\,dy)
     =(p_1^2+p_2^2)\,dx\wedge dy\ne0.
\]
Thus \(\ker d\phi_m\cap\ker d\zeta=\{0\}\), and
\(\Phi_m=(\phi_m,\zeta)\) has rank two everywhere.
By \Cref{lem:harmonic-circle-product},
\((\Phi_m,(\Psi_m,0))\) is Dirac-harmonic, with nowhere-vanishing
tension and parallel twisted spinors for \(g_0\). Since \(\zeta\)
is fixed,
\[
 E_g(\Phi_m)=E_g(\phi_m)+E_g(\zeta)\longrightarrow\infty.
\]
For nonorientable \(N\), the local-isometry projection used in
\Cref{thm:nonflat-surface} preserves \(\ker d\phi_m\), so the
same argument applies.
\end{proof}

The construction applies to the other rank-one families in this
paper. The nonzero winding vector \((-p_2,p_1)\) of \(\zeta\)
makes the resulting rank-two maps non-null-homotopic.

For general higher-dimensional targets, the curve construction
requires the covariant derivatives of \(U,V\) and the vector
\(R(U,V)V\) to remain in their span. The following criterion
states the required identities along the curve.

\begin{theorem}[A curve criterion in arbitrary dimension]
\label{thm:ambient-curve}
Let \((N,h)\) contain a smooth unit-speed closed immersed curve
\(\gamma:\RR/L\ZZ\to N\), and let \(U,V=\gamma'\) be smooth
periodic orthonormal fields along \(\gamma\). Suppose
\begin{equation}\label{eq:ambient-hypotheses}
 \nabla_VU=\kappa V,\qquad
 \nabla_VV=-\kappa U,\qquad
 R^N(U,V)V=kU,\qquad \kappa=ak,
\end{equation}
where \(a\ne0\) is constant and \(k\) is nowhere zero.
Assume also
\begin{equation}\label{eq:ambient-phase}
 e^{\ii m_0\Theta_\gamma}=-1
 \quad\text{for some }m_0\in\NN,
 \qquad \Theta_\gamma=\int_0^L\kappa\,d\ell.
\end{equation}
Then every closed Riemannian spin torus admits a sequence of
coupled Dirac-harmonic maps into \(N\) satisfying
\eqref{eq:main-conclusions}, with parallel twisted spinors for a
flat conformal representative. The maps are null-homotopic if
\(\gamma\) is contractible in \(N\).
\end{theorem}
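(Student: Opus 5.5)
The plan is to repeat the three-step mechanism of the proof of \Cref{thm:nonflat-surface} verbatim, with the surface curve replaced by the given ambient curve $\gamma$. The only new point is to check that no components normal to $\operatorname{span}(U,V)$ enter the covariant derivatives, the Dirac equation, or the curvature force. By \Cref{lem:conformal} it suffices to work with the flat representative $g_0$ of \eqref{eq:general-flat-torus}. We choose $(p,\sigma_*)$ by \eqref{eq:choose-direction}, define $t,\rho,E_1,E_2$ as in \eqref{eq:circle-parameter} and \eqref{eq:adapted-source-frame}, and take the traversal numbers $m$ from \eqref{eq:Cprime-coverings}. We then set $\phi_m=\gamma\circ\ell_m$ with $\ell_m$ and $v_m$ as in \eqref{eq:Cprime-map}, and define $\Psi_m$ by \eqref{eq:Cprime-spinor}, using $\epsilon=\operatorname{sign}(a)$ and $A_m=\sqrt{v_m|a|}$. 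The descent computation \eqref{eq:Cprime-descent} only uses \eqref{eq:ambient-phase} and the periodicity of $U,V$. It therefore transfers unchanged, and $\Psi_m$ is a global section of $E_{\phi_m}$.

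Next we verify $\tnabla\Psi_m=0$. Since $d\phi_m(E_2)=0$ and all coefficients depend only on $\ell_m$, we have $\tnabla_{E_2}\Psi_m=0$. For $E_1$, the hypotheses $\nabla_VU=\kappa V$ and $\nabla_VV=-\kappa U$ in \eqref{eq:ambient-hypotheses} give $\nabla_V(U+\ii V)=-\ii\kappa(U+\ii V)$, with no normal component. The local curve computation of \Cref{sec:annuli} then yields $\tnabla_{E_1}\Psi_m=v_m\ii(\alpha'-\kappa)\Psi_m=0$. Hence $\D^{\phi_m}\Psi_m=0$. The tension is $\tau(\phi_m)=v_m^2\nabla_VV=-v_m^2\kappa U_m$, which is nowhere zero because $\kappa=ak$ and $k$ does not vanish.

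The main obstacle is the curvature force \eqref{eq:curvature-term} in an arbitrary target direction $Y$. Only $\alpha=1$ contributes, and $\Psi_m=\xi\otimes U+(\ii\xi)\otimes V$ with $e_1\cdot\xi=\ii\epsilon\xi$. We will use \eqref{eq:vector-curvature}, which gives
\[
 \cR=\tfrac12\sum_{j,k}\Rea\pair{\psi^j}{E_1\cdot\psi^k}\,R(\mathsf e_j,\mathsf e_k)\,v_mV.
\]
Here only $j,k\in\{U,V\}$ occur. The diagonal terms vanish because $R(U,U)=R(V,V)=0$. The mixed terms give a real multiple of $R(U,V)V=kU$. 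The hypothesis $R^N(U,V)V=kU$ is exactly what excludes normal components, since the only vector that arises is $R(U,V)V$. To fix the coefficient, we compare with the surface case \eqref{eq:curve-balance}, where $k$ played the role of $K$. This comparison is consistent because there $R(U,V)V=KU$ by the convention \eqref{eq:disk-curvature-form}. The result is $\cR(\phi_m,\Psi_m)=-\epsilon v_mkA_m^2U_m=-v_m^2akU_m=\tau(\phi_m)$. We will double-check the sign by evaluating $\Rea\pair{\xi}{E_1\cdot\ii\xi}=-\epsilon|\xi|^2$ directly.

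Finally we collect the conclusions. The map has rank one because $v_m>0$, and the energy $E(\phi_m)=\tfrac12v_m^2\Area(M,g_0)$ tends to infinity as $m\to\infty$. Each $\phi_m$ factors through $\gamma$, so it is null-homotopic when $\gamma$ is contractible. Conformal transfer by \Cref{lem:conformal} then completes the proof for an arbitrary closed spin torus.
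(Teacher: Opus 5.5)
Your proposal is correct and follows the paper's proof essentially step for step: you reuse the construction \eqref{eq:Cprime-map}--\eqref{eq:Cprime-spinor}, obtain $\tnabla\Psi_m=0$ and $\tau(\phi_m)=-v_m^2\kappa_mU_m$ from the two Frenet identities, and evaluate the force through \eqref{eq:vector-curvature}, so that only $R^N(U_m,V_m)V_m=k_mU_m$ enters. The paper fixes the coefficient by the direct evaluation you propose as a check, $\Rea\langle\xi_m,E_1\cdot(\ii\xi_m)\rangle=-\epsilon A_m^2$ and $\Rea\langle\ii\xi_m,E_1\cdot\xi_m\rangle=\epsilon A_m^2$, rather than by comparison with the surface case; this gives $\cR(\phi_m,\Psi_m)=-\epsilon v_mA_m^2R^N(U_m,V_m)V_m=\tau(\phi_m)$.
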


\begin{proof}[Proof of \Cref{thm:ambient-curve}]
Use the construction \eqref{eq:Cprime-map}--\eqref{eq:Cprime-spinor}
with the given curve data, \(\epsilon=\operatorname{sign}(a)\),
phase \(\alpha(\ell)=\int_0^\ell\kappa(s)\,ds\), and traversal
numbers \eqref{eq:Cprime-coverings}.
Condition \eqref{eq:ambient-phase} gives the spin signs
\eqref{eq:Cprime-descent}, so the spinors are globally defined.
With \(\kappa_m=\kappa\circ\ell_m\) and \(k_m=k\circ\ell_m\),
the first two identities in \eqref{eq:ambient-hypotheses} give
\[
 \nabla_V(U+\ii V)=-\ii\kappa(U+\ii V),\qquad
 \tnabla\Psi_m=0,\qquad
 \tau(\phi_m)=-v_m^2\kappa_m U_m.
\]
To check the ambient curvature force, extend \(U_m,V_m\) to a
local orthonormal target frame. The only nonzero target coefficients
of \(\Psi_m\) are \(\xi_m\) and \(\ii\xi_m\), and
\(E_1\cdot\xi_m=\ii\epsilon\xi_m\) gives
\[
 \Rea\langle\xi_m,E_1\cdot(\ii\xi_m)\rangle=-\epsilon A_m^2,
 \qquad
 \Rea\langle\ii\xi_m,E_1\cdot\xi_m\rangle=\epsilon A_m^2.
\]
All diagonal coefficients vanish. Substituting in
\eqref{eq:vector-curvature}, with
\(d\phi_m(E_1)=v_mV_m\) and \(d\phi_m(E_2)=0\), gives
\begin{align}
 \cR(\phi_m,\Psi_m)
 &=\frac{v_m}{2}\bigl(-\epsilon A_m^2R^N(U_m,V_m)V_m
                   +\epsilon A_m^2R^N(V_m,U_m)V_m\bigr)\notag\\
 &=-\epsilon v_mA_m^2R^N(U_m,V_m)V_m\notag\\
 &=-v_m^2ak_mU_m=-v_m^2\kappa_m U_m=\tau(\phi_m).
 \label{eq:ambient-force}
\end{align}
Rank, nonvanishing tension, and energy follow from
\eqref{eq:Cprime-differential} and \eqref{eq:Cprime-energy}.
Apply \Cref{lem:conformal} for the prescribed source metric.
If \(\gamma\) is contractible, so is
\(\phi_m=\gamma\circ\ell_m\).
\end{proof}

The vector identity \(R^N(U,V)V=kU\) excludes components
orthogonal to \(\operatorname{span}\{U,V\}\), which the sectional
curvature condition \(\langle R^N(U,V)V,U\rangle=k\) does not.
The two Frenet identities likewise exclude additional normal
components of the covariant derivatives.

\begin{proof}[Proof of \Cref{cor:nonflat-TG-surface}]
Let \(\iota:P\to N\) be the given immersion. Apply
\Cref{thm:nonflat-surface} to \(P\), then compose using
\Cref{lem:totally-geodesic}. Since \(\iota\) is isometric,
rank and energy are preserved; null-homotopy is preserved by
composition.
\end{proof}

The same argument applies to a totally geodesic surface disk
containing a curve and its filling from
\Cref{lem:short-magnetic-curves}, as illustrated by
\Cref{cor:spaceforms}.

Two classes of warped products supply totally geodesic surfaces.
Let \((P,h_P)\) be a closed nonflat surface and \((Q,h_Q)\)
a closed Riemannian manifold. In
\begin{equation}\label{eq:base-warp-product}
 N=P\times_f Q,\qquad h=h_P+f(p)^2h_Q,
 \qquad f:P\to(0,\infty),
\end{equation}
every slice \(P\times\{q\}\) is totally geodesic. For lifted
fields \(X,Y\) on \(P\) and \(Z\) on \(Q\), the Koszul
formula gives
\[
 2h(\nabla_XY,Z)=-Z(h_P(X,Y))=0,
\]
so \(\mathrm{II}=0\). The choice \(f\equiv1\) includes
ordinary products.

In the opposite order, let \((B,h_B)\) be closed and put
\begin{equation}\label{eq:fiber-warp-product}
 N=B\times_f P,\qquad h=h_B+f(b)^2h_P,
 \qquad f:B\to(0,\infty).
\end{equation}
For lifted fiber fields \(X,Y\) and a base field \(Z\),
\[
 2h(\nabla_XY,Z)=-Z\bigl(f^2h_P(X,Y)\bigr)
               =-2h(X,Y)Z(\log f).
\]
Hence \(\mathrm{II}(X,Y)=-h(X,Y)\nabla^B\log f\).
Compactness of \(B\) supplies a critical point \(b_0\) of \(f\),
where the fiber is totally geodesic with nonflat induced metric
\(f(b_0)^2h_P\). Thus both warped products give examples in
every target dimension at least three.

\section{Choosing the target metric}
\label{sec:metric-choice}

We prove \Cref{thm:metric-choice} by making the target metric
round in one coordinate ball and applying the local space-form
construction.

\begin{proof}[Proof of \Cref{thm:metric-choice}]
Choose a coordinate ball \(B\) with
\(\overline B\subset\mathcal O\), and concentric smaller balls
\(B_1\Subset B_2\Subset B\). Rescale the chart so that \(B\)
has coordinate radius less than \(\pi/2\), and put
\begin{equation}\label{eq:implanted-round-metric}
 h_{\mathrm{rd}}=dr^2+\sin^2r\,g_{S^{n-1}}.
\end{equation}
This metric is smooth at the origin, being the round metric
in normal coordinates.

Choose \(\varpi\in C_c^\infty(B_2)\) with
\(0\le\varpi\le1\) and \(\varpi=1\) near \(\overline B_1\)
\cite[Proposition~2.25]{Lee13}. Extend it by zero and define
\begin{equation}\label{eq:patched-target-metric}
 h=
 \begin{cases}
 \varpi h_{\mathrm{rd}}+(1-\varpi)h_{\mathrm{ref}},&\text{on }B,\\
 h_{\mathrm{ref}},&\text{on }N\setminus B.
 \end{cases}
\end{equation}
The convex combination is positive definite and agrees smoothly
with \(h_{\mathrm{ref}}\) near \(\partial B\). Thus \(h\) is
a smooth metric, equal to \(h_{\mathrm{rd}}\) near
\(\overline B_1\) and to \(h_{\mathrm{ref}}\) outside
\(B_2\subset\mathcal O\).

The local construction in the proof of \Cref{cor:spaceforms}
supplies a totally geodesic warped annulus and the required
solutions inside \(B_1\). They are null-homotopic since \(B_1\)
is contractible. The metric and annulus are independent of the
source metric and spin structure.
\end{proof}

\end{document}